\newcommand{\Colon}{\colon\>}   
\newcommand{\sth}{\mathrel{;}}         
\newcommand{\dl}{\mathrm{d}}           
\DeclareMathOperator{\Cont}{\mathcal C}
\newcommand{\comp}{\mathrm{c}}  
\DeclareMathOperator{\ACL}{ACL}
\DeclareMathOperator{\dom}{dom}
\DeclareMathOperator{\Lip}{Lip}
\newcommand{\Id}{\mathrm{Id}}
\newcommand{\loc}{\mathrm{loc}}
\newcommand{\FFF}{\mathcal{F}}
\newcommand{\HHH}{\mathcal{H}}
\newcommand{\AAA}{\mathcal{A}}
\newtheorem{prop}{Proposition}[section]
\newtheorem{cor}[prop]{Corollary}
\newtheorem{lemma}[prop]{Lemma}
\newtheorem{rem}[prop]{Remark}
\newtheorem{thm}[prop]{Theorem}
\newtheorem{theorem}[prop]{Theorem}
\newtheorem{definition}[prop]{Definition}
\newtheorem{example}[prop]{Example}
\newcommand{\A}{\mathcal A}
\newcommand{\e}{\varepsilon}
\renewcommand{\geq}{\geqslant}
\def\leq{\leqslant}
\newcommand{\N}{\mathbb{N}}
\newcommand{\R}{\mathbb{R}}
\newcommand{\E}{\mathbb{E}}
\renewcommand{\P}{\mathbb{P}}
\newcommand{\eq}{\begin{equation}}
\newcommand{\qe}{\end{equation}}
\DeclareMathOperator{\dv}{div}
\newcommand{\grad}{\qopname \relax o\nabla \!\null}
\newcommand{\norm}[1]{{\left\| #1 \right\|}}
\newcommand{\opnorm}[1]{{\left\| #1\right\|_{\mathrm{op}}}}
\begin{document}

\title{{Stein's density method}
for multivariate
  continuous distributions} \author{G. Mijoule\footnote{INRIA Paris,
    Team MoKaPlan, 2 rue Simone Iff, 75012 Paris,
    guillaume.mijoule@gmail.com}, \;
    M. Rai\v{c}\footnote{University of Ljubljana, Faculty of Mathematics and Physics, Jadranska 19, SI--1000 Ljubljana, Slovenia; University of Primorska, Faculty of Mathematics, Natural Sciences and Information Technologies, Glagolja\v{s}ka 8, SI--6000 Koper, Slovenia; Institute of Mathematics, Physics and Mechanics, Jadranska 19, SI-1000 Ljubljana, Slovenia;
    martin.raic@fmf.uni-lj.si}, \; G. Reinert\footnote{University of
    Oxford, Department of Statistics, 24--29 St Giles, Oxford OX1 3LB, and
    The Alan Turing Institute, 96 Euston Rd, London NW1 2DB, UK; reinert@stats.ox.ac.uk} \; and
  Y. Swan\footnote{Universit\'e libre de Bruxelles, D\'epartement de
    Math\'ematique, Campus Plaine, Boulevard du Triomphe CP210, B--1050
    Brussels, yvik.swan@ulb.be} }

\date{}
\maketitle

\abstract{This paper provides a general framework for Stein's density
  method for multivariate continuous distributions. The approach
  associates to any probability density function a canonical operator
  and Stein class, as well as an infinite collection of operators and
  classes which we call standardizations. These in turn spawn an
  entire family of Stein identities and characterizations for any
  continuous distribution on $\R^d$, among which we highlight those
  based on the score function and the Stein kernel. A feature of these
  operators is that they do not depend on normalizing constants. A new
  definition of Stein kernel is introduced and examined; integral
  formulas are obtained through a connection with mass transport, as
  well as ready-to-use explicit formulas for elliptical distributions.
  The flexibility of the kernels is used to compare in Stein
  discrepancy (and therefore 2-Wasserstein distance) between two
  normal distributions, Student and normal distributions, as well as
  two normal-gamma distributions.  Upper and lower bounds on the
  1-Wasserstein distance between continuous distributions are
  provided, and computed for a variety of examples: comparison between
  different normal distributions (improving on existing bounds in some
  regimes), posterior distributions with different priors in a
  Bayesian setting (including logistic regression), centred
  Azzalini--Dalla Valle distributions. Finally the notion of weak Stein
  equation and weak Stein factors is introduced. Bounds for solutions of the weak Stein equation are obtained
  for Lipschitz test functions if the distribution admits a
  Poincar\'e constant. {We use these bounds} 
  to compare different copulas on the unit square {in 1-Wasserstein
  distance}. }

\medskip

MSC 2010 classification: 60B10, 60B12

\section{Introduction}
\label{sec:introduction}

Stein's method of approximate computation of expectations is a
collection of tools permitting to bound \emph{integral probability
  metrics} {or \emph{Zolotarev metrics}}
$$d_{\mathcal{H}}(X, W) = \sup_{h \in \mathcal{H}} \left|
  \E[h(W)] - \E[h(X)] \right|$$ where $\HHH$ is a
measure-determining class of functions and $X, W$ are two random
quantities of interest with $X$, say, {{distributed according to}  the \emph{target
  distribution} {over which it is assumed that we have some handles}.
In order to implement this method for $X$, one needs first to
find a linear operator ${\AAA}$, called  a \emph{Stein operator},  and a
class of functions
$\FFF(\AAA)$, called a \emph{Stein class}, such that (i)
the \emph{Stein identity} $ \E [ \AAA f(X)] = 0$  holds for all functions $f \in \FFF(\AAA)$ and (ii)
for each $h \in \HHH$, there exists a solution
$f = f_h \in \FFF(\AAA)$ of the {{\it Stein equation}}
$\AAA f(w) = h(w) - \E [h(X)].$ Then, upon noting that
$ \E [h(W)] - \E [h(X)] = \E [\AAA f_h (W)]$ for all $h$,
the problem of bounding $d_{\mathcal{H}}(X, W)$ has been re-expressed
in terms of that of bounding the {\emph{Stein discrepancy}}
$ \sup_{h \in \mathcal{H}} {|} \E [\AAA f_h (W)] {|}$. The {popularity  of
Stein's} method lies {for a large  part} in the fact that%
{, in many important cases,} {Stein discrepancies are}  amenable to a wide variety of approaches. {Moreover} 
the {various equations, operators
    and  discrepancies} {appearing in Stein's method}
  {provide mathematical objects   which 
  {can be} in themselves} of intrinsic and
  independent
  interest}. 

There exist many frameworks in which Stein's method is well
  understood, particularly for univariate distributions.  Comprehensive
  introductions to some of the most important aspects of the theory
  are available from the monographs \cite{stein1986} as well as
  \cite{NP11,ChGoSh11}, with a particular focus on Gaussian
  approximation. We also refer to
  the survey \cite{Ro11} as
  well as the papers \cite{LRS16,ley2015distances}.

  Although the univariate case is the most studied, many references
  also tackle multivariate distributions. The starting point for a
  multivariate Stein's method is a \emph{Stein characterization} for the
  Gaussian law which states that a random vector {$Z$} is a multivariate
  Gaussian $d$-random vector with mean $\nu$ and covariance $\Sigma$
  (short: $Z \sim {\cal{N}}(\nu, \Sigma)$) if and only if
\begin{equation}
  \label{eq:stlemma}
   \E\big[ \left\langle \Sigma,  \grad^2 {g}(Z)
   \right\rangle_{\mathrm{HS}}\big] =  \E \big[\left\langle Z-\nu, \grad {g}(Z) \right\rangle \big],
\end{equation}
for all absolutely continuous function ${g} \Colon \R^d \to \R$ for which
the expectations exist; here $\grad$ denotes the gradient operator
{for functions which act on $\R^d$} (a column vector), {$\left\langle A, B \right\rangle_{\mathrm{HS}} = \sum_{i,
  j}A_{ij}B_{ij}$ is the Hilbert--Schmidt inner product {between $d
  \times d$ matrices} and $\left\langle u, v \right\rangle =
\sum_{i} u_i v_i$ is the usual scalar product between vectors in
$\mathbb{R}^{d}$}. From
\eqref{eq:stlemma}, one infers that the second order differential
operator
$\mathcal{A}{g}(x) =
\left\langle \Sigma, \grad^2 {g}(x) \right\rangle_{\mathrm{HS}} -
\left\langle x - \nu, \grad {g}(x) \right\rangle$  is a
Stein operator for the Gaussian distribution {${\cal{N}}(\nu, \Sigma)$}, with {Stein} class
$\mathcal{F}(\mathcal{A})$ the collection of functions for which
\eqref{eq:stlemma} holds. Classical Markov theory then provides a
solution of the corresponding Stein equation
{$\mathcal{A}g(x) = h(x) - \E[h({Z})]$} via the so-called Mehler
formula, leading to a variety of applications of Stein's method for
multivariate normal approximation as treated e.g.\ in \cite{MR1035659, G91,
  GoRi96, MR2573554, meckes2009stein,   bonis2016rates}.  Classical
Markov theory also provides Stein operators, equations, and solutions
outside the Gaussian case, e.g.\ for log-concave densities as well as
for ergodic {measures} of {stochastic differential equations} 
satisfying regularity conditions, as
studied e.g.\ by \cite{mackey2016multivariate,
  gorham2016measuring,fang2018multivariate}. {Other references
  extending the method beyond the multivariate Gaussian setting
  include \cite{arras2017stein,arras2018stein} 
  {for} infinitely
  divisible distributions with finite first moment, and}
\cite{barbour2015multivariate, barbour2018multivariate} 
{for} a
framework which is applicable 
{to} many discrete multivariate
distributions. In \cite{reinert2019approximating}, stationary
distributions of Glauber Markov chains are characterized.  The
Dirichlet distribution has been treated in \cite{gan2017dirichlet},
and the multinomial distribution is considered in
\cite{loh1992stein}. {This literature review is not intended to be
  exhaustive; {more references can be found e.g.\ at the websites}
  https://sites.google.com/site/steinsmethod and
  https://sites.google.com/site/malliavinstein}.

Several {general approaches} to {finding Stein operators}
exist in the univariate setting, {among which we highlight Barbour's
  \emph{generator approach} (which makes use of Markov generator
  theory, see \cite{G91,MR1035659}) and Stein's \emph{density
    approach} (which makes use of the properties of the underlying
  density, see \cite{stein1986, stein2004} as well as
  \cite{Do14,upadhye2014stein,ley2015distances,LRS16}). The generator
  approach extends naturally to the multivariate setting. The density
  approach has, so far, not been extended to the multivariate
  setting.}  {The {aim} of this paper is to fill this gap and to
  explore implications of  Stein's density method in the multivariate
  setting. In particular this paper highlights the considerable
  flexibility in Stein operator choice which includes operators which
  are not gradient operators, thus complementing the generator
  approach in the multivariate setting by offering an alternative
  which does {not} require the analysis of a Markov operator.}

\subsection{Stein's multivariate density method}
\label{sec:overview-results}

The probability distributions in this paper are assumed to admit a
probability density function (pdf) {$p$} with respect to Lebesgue
measure {on  $\R^d$}. {The expectation under $p$ is denoted by
$\E_p$; for a function $h$,  we write $\E_p h = \E \bigl[ h(X) \bigr] $
with $X$ having pdf $p$.} The building blocks of our construction are the
\emph{canonical directional Stein derivatives}
$$ f \mapsto \mathcal{T}_{e, p} f = \frac{\partial_e( p \, f)}{p};$$ the class
of functions which satisfy the {``}elementary{''}
Stein identity
$\E_p [\mathcal{T}_{e, p}f] = 0$ 
is then {denoted
  $\mathcal{F}_1(p)$} {(Section \ref{sec:standardizations} provides many more Stein identities which are perhaps not as elementary)}.  Such {pairs
  $(\mathcal{T}_{e, p}, \mathcal{F}_1(p))$} can be defined in any direction $e${, and we collect in the
  \emph{canonical Stein class}
  $\mathcal{F}(p)$ all functions for which the elementary Stein
  identity holds for every component in (almost) every direction. {More precise definitions, along with clarifications of the various functional spaces we consider,  will be provided below.} The
  resulting objects} can be combined in virtually endless
possibilities. In this spirit, we introduce the \emph{Stein gradient
  operator}
$\bullet \mapsto \mathcal{T}_{\grad, p} \bullet ={\grad \left( p
    \bullet \right)} / {p}$ which many authors call \emph{the} Stein
operator for multivariate $p$.  Similarly, we define the \emph{Stein
  divergence operator} acting on vector- or matrix-valued functions
with compatible dimensions as
$\bullet \mapsto \mathcal{T}_{\dv, p} \bullet =
{\dv\left( p \bullet \right)}/ {p}.$
{More generally,}  a \emph{standardization} of the canonical operator  is
any operator of the form
$\AAA \bullet = \sum_{i=1}^d{\mathbf{A}}_i \mathcal{T}_{e_i, p}
({\mathbf{T}}_i \bullet)$ where $e_{i}, i=1,\ldots, d$ are the unit
vectors in $\R^d$ and $\mathbf{A}_i, \mathbf{T}_i, i = 1, \ldots, d$
are some linear operators{.}

{
{T}he freedom of choice in the standardizations is only as
  useful as one can find operators $\mathbf{A}_i, \mathbf{T}_i$
  leading to tractable expressions for $\mathcal{A}$. Many
  distributions $p$ have a tractable score function $\grad \log p$
  (including, of course, the Gaussian distribution but also
  \emph{elliptical distributions}, convolutions of independent
  components, etc.). In such cases a direct application of the Stein
  gradient {or} divergence operators leads to tractable first order
  vector-valued operators of the form
  $\mathcal{A}_1 g = \mathcal{T}_{\grad, p} (\Sigma g) = \Sigma \grad
  g + (\Sigma \grad \log p)g $ ({where} $\Sigma \in \R^{d \times d}$)  
  with class $\mathcal{F}(\mathcal{A}_1) = \mathcal{F}_1(p)$,
  as well as second order scalar-valued operators
  $ \tilde{\mathcal{A}}_1g = \mathcal{T}_{\dv, p} (\Sigma
  \grad g) = \left\langle \Sigma, \grad^2 g
  \right\rangle_{\mathrm{HS}} + \left\langle \Sigma \grad \log p,
    \grad g \right\rangle$ acting on
  $\mathcal{F}(\tilde{\mathcal{A}}_1)$ the collection of functions
  such that $\grad g \in \mathcal{F}({\mathcal{A}}_1)$.}
The Gaussian Stein operator {derived from} \eqref{eq:stlemma} is of this
second order form. Much of the more applied literature on
multivariate Stein's method focuses on such second order
  operators,
because $\grad \log p$ (and hence the resulting operator) does not
depend on the normalizing constants of $p$, which is very useful e.g.\
in the study of posterior densities in a Bayesian context. This
independence of the normalizing constants is
   inherited from the
  nature of the canonical directional Stein derivatives and is a
  feature of all operators provided by Stein's density method.

  Another family of standardizations with which much of the paper is
  concerned {relates to the \emph{Stein kernel}.}
First studied in \cite{stein1986}
{(although already earlier used to tackle} a smooth pdf $p$, see
e.g.\ \cite{C82} 
{where it is referred to} as a covariance kernel), {this
  quantity} has become {an important component}
of univariate Stein's method, see e.g.~\cite{NP11,
  ernst2020first,saumard2018weighted} for an overview. In dimension
$d=1$, the Stein kernel for a pdf $p$ is
the unique solution $x \mapsto {\tau_p}(x)$ in $\mathcal{F}(p)$ of the
{ordinary differential equation} $$ \mathcal{T}_p({\tau_p}(x)) {:= (\tau_p(x) p(x))'/p(x)} =\nu-x$$ {with} $\nu$
 the
mean of $p$. It exists and is unique when the distribution has finite
  variance {-- the Stein kernel is the zero bias density from \cite{GoRei97}}.    
  {Moreover,} for $X \sim p$,
  $\E[\tau_p(X) g'(X)] = \E[(X -\nu) g(X)]$ for all
  $g$ such that $\E |(X - \nu) g(X) | <\infty$.
  {Uniqueness} of
the kernel is lost for multivariate distributions and several
concurrent definitions exist.
In  \cite{nourdin2013integration,courtade2017existence},
    a Stein
  kernel $\pmb \tau$ for a pdf  $p$ is defined as any matrix-valued
  function satisfying
 \begin{equation} \label{eq:eqsteinidddddd1}
  \E \left[ \left\langle \pmb{\tau}(X), \grad g(X) \right\rangle_{\mathrm{HS}}
  \right] = \E \left[  \left\langle  X-\nu, g(X) \right\rangle\right]
 \end{equation}
at least for all smooth $g\Colon \R^d \to \R^d $ with compact support.
In \cite{ledoux2015stein}, a Stein
  kernel $\pmb \tau$ for a pdf  $p$ is  required to satisfy \eqref{eq:eqsteinidddddd1}
  only for smooth functions which are gradients, to read
 \begin{equation} \label{eq:eqsteinidddddd}
  \E \left[ \left\langle \pmb{\tau}(X), \grad^2 g(X) \right\rangle_{\mathrm{HS}}
  \right] = \E \left[  \left\langle  X-\nu, \grad g(X) \right\rangle\right] .
 \end{equation}
 Associated with this definition is a second order scalar valued
 operator ${\mathcal{A}}_2g(x) = \left\langle \pmb \tau(x),
   \grad^2g\right\rangle_{\mathrm{HS}} - \left\langle x-\nu, \grad
   g(x)
 \right\rangle$.  Comparing \eqref{eq:eqsteinidddddd} with
 \eqref{eq:stlemma} shows that the covariance matrix
 $\Sigma$ is a Stein kernel for the $\mathcal{N}(\nu,
 \Sigma)$ distribution; {hence the Stein-kernel operator
   ${\mathcal{A}}_2$ also generalizes the Gaussian operator.
   Moreover, the difference $\pmb \tau -
   \Sigma$ has been used as a Gaussian discrepancy metric, see e.g.\
   \cite{nourdin2013integration,courtade2017existence,
     ledoux2015stein}, where it is shown that it captures some
   essential features of the underlying distribution, relating Stein
   kernels with
   log-Sobolev inequalities,
  Poincar\'e constants and moment maps.

  In this paper we adopt a direct approach and define a
  {\it{directional Stein kernel}} for each canonical direction $e_i
  \in \R^d$, as any differentiable function $x \mapsto \tau_{p,i}(x)
  \in \R^d$ such that {$ \mathcal{T}_{\dv, p}\big(\tau_{p,
    i}\big)(x) =\E[\left\langle X, e_i \right\rangle]-
  x_i$} for Lebesgue almost all $x$ in the support of
  $p$.  A \emph{Stein kernel} is then \emph{any} matrix-valued
  function $ \pmb\tau$ such that each row $ \pmb \tau_i = (\tau_{i1},
  \ldots, \tau_{id})$ is a kernel in the direction
  $e_i$, different kernels leading to different associated operators.
  {This} definition
  opens the way for identifying several Stein kernels (and therefore
  Stein operators) for any given pdf,
   even for the  Gaussian
  $\mathcal{N}(\nu, \Sigma)$: Example \ref{sec:stein-oper-Gauss} will
  show that,
aside from the Stein kernel $ \pmb\tau = \Sigma,$
 if $d \neq 1$ then
 the matrix-valued functions
\begin{equation*}
    \pmb \tau_{{\delta}}(x) =  \frac{{1}}{(2 - \delta)(d-1)} \left(\left(
      d-1 +  ({1 - \delta})  x^T x \right) \Sigma - ({1 - \delta}) (x - \nu)(x-\nu)^T
      \right)
\end{equation*}
are Gaussian Stein kernels for all $\delta \neq 2$.  More generally,
  we 
  {find} Stein kernels for elliptical distributions and 
  formulas allowing to obtain Stein kernels for {smooth multivariate}
  distributions {with densities} which {are} available up to a
  normalizing constant.

{As our formalism provides an infinite family of Stein operators and
  classes $ (\mathcal{A}, \mathcal{F}(\mathcal{A}))$ for any
  distribution
 ${P}$ having a pdf (even intractable distributions),
  a variety of versions of Stein's method of distributional
  approximation {can be deployed} by considering quantities of the
  form
  $\mathcal{S}(q, \mathcal{A}, \mathcal{G}) = \sup_{g \in \mathcal{G}}
 {\| \E_q \mathcal{A}g \|}$ for $q$ some distribution
  of interest, $\mathcal{A}$ any Stein operator {for $P$}  and $\mathcal{G}$ a
  well-chosen class of functions. 
  These quantities are called \emph{Stein
    discrepancies} and have proven to be particularly useful when
  studying questions of convergence to equilibrium, see e.g.\
  \cite{gorham2016measuring}.
  The freedom of choice and ease of use of our operator approach
  $\mathcal{A}$ now allows for optimization over  possible Stein
  operators $\mathcal{A}$ for $P$ in
   $\mathcal{S}(q, \mathcal{A}, \mathcal{G})$.

   In a first application, in Section \ref{sec:stein-kern-discr}, we concentrate on
  differences of Stein kernels, which provides a
  general and easy to use discrepancy metric; our examples include
  comparison of Gaussians (Example \ref{ex:comaoighe}), comparison of Student and
  Gaussian (Example \ref{ex:gauvsstudent}), and comparison of normal-gamma
  distributions (Example \ref{ex:normalgamma}). {In the second
    example we also exploit the freedom of choice in the Stein kernels
    to optimize the resulting discrepancy, hereby demonstrating the
    use of disposing of several kernels for a fixed comparison problem}.

In a second application,  in Section \ref{sec:stein-discrepancies} we {estimate}
discrepancies between
distributions $P_1, P_2$ {with  supports $\Omega_1 $ and $ \Omega_2$ which are open subsets of $\R^d$ and nested, so that $\Omega_2 \subseteq \Omega_1$. We express the discrepancies} in terms of the (1-)Wasserstein
distance
\begin{equation*}
  {\mathcal{W}}_1(P_1, P_2)
  =
  \sup_{h \in \Lip({\Omega_1}, 1)} \left|
    \int h \, \dl P_1 - \int h \, \dl P_2
  \right| \, ,
\end{equation*}
{where} $\Lip(\Omega, 1)$ is
the collection of
Lipschitz functions {$\Omega \rightarrow \R$ with} 
Lipschitz constant 1.
  For $P_1$
and $P_2$ with pdfs $p_1, p_2$ and chosen
Stein operator and class
$(\mathcal{A}_{p_1}, \mathcal{F}(\mathcal{A}_{p_1}))$ for $p_1$ and
$(\mathcal{A}_{p_2}, \mathcal{F}(\mathcal{A}_{p_2}))$ for $p_2$,
fixing $P_1$ as the target, we consider the \emph{Stein
  equations}
 \begin{equation}
 \label{eq:steqgen2}
 \mathcal{A}_{p_1}g = h -  \E_{p_1}h; \quad h \in \Lip({\Omega_1,}1).
\end{equation}
Taking
expectations with respect to  $P_2$ on either side of
 \eqref{eq:steqgen2} shows
 that the Wasserstein distance
 satisfies
 \begin{equation}\label{eq:diststeindis}
   \mathcal{W}_1(P_1, P_2) =  \sup_{g \in
   \mathcal{G}(\Lip({\Omega_1,} 1))} \left|\E_{p_2} \left[
     \mathcal{A}_{p_1} \, g \right]\right|
 \end{equation}
 with $\mathcal{G}(\Lip({\Omega_1,}1))$
 the collection of all solutions
 of \eqref{eq:steqgen2} which belong to
 $\mathcal{F}(\mathcal{A}_{p_1})$.
 Moreover,  if
$\mathcal{A}_{p_2}$ for $p_2$ is chosen such that
$\mathcal{G}(\Lip({\Omega_1,}1)) \subseteq \mathcal{F}(\mathcal{A}_{p_2})$
{then} $\E_{p_2} \left[ \mathcal{A}_{p_2} \, g \right] = 0$ and
$\mathcal{W}_1(P_1, P_2) = \sup_{g \in \mathcal{G}(\Lip({\Omega_1,}1))}
\left|\E_{p_2} \left[ (\mathcal{A}_{p_1}-\mathcal{A}_{p_2}) \, g
  \right]\right|.$ The freedom of choice in $\mathcal{A}_{p_i}$,
$i=1, 2$, makes this last expression a good starting point for
comparison in Wasserstein distance{: 
Theorem
  \ref{theo:comap} {provides} a general bound on the Wasserstein distance between
  different pdfs on $\R^d$
  {which} we particularize 
  in Proposition
  \ref{cor:wass-dist-betw-1}  
  to  even obtain lower bounds (depending on the Stein kernel).
  We apply these results to a number of concrete
  applications; in Section~\ref{sec:stein-discrepancies} we compare in
  Wasserstein distance different normal distributions
  (Example~\ref{ex:compafonormales}), posterior distributions with different
  priors in a Bayesian setting (Examples \ref{ex:normvsunifom} and
  \ref{ex:bayeslog}); we also study the effect of the skewness
  parameter on centred Azzalini--Dalla Valle distributions where our
  upper and lower lead to an explicit expression for the  Wasserstein distance (Example \ref{ex:azzal}).}

The above approach to distributional comparisons with Stein's method
relies on a good understanding of Stein equations \eqref{eq:steqgen2}
and their solutions.  Much is already known about the regularity
properties (often referred to as {\it Stein factors}) of these
solutions in several important settings; see Example \ref{ex:steinfac}
for Gaussians, Example \ref{lem:bouddd2} for log-concave pdfs, and
Example \ref{ex:mcoahsoin} for more general results.  {Such
  ready-to-use regularity properties are not always available and,
  f}inally, as a further contribution of the paper,
  Proposition~\ref{prop:l2bound} shows that \eqref{eq:diststeindis} also holds under
a weakened form of the Stein {equation} \eqref{eq:steqgen2}, namely
\eqref{eq:weaksteinequ}, which states
\begin{equation*}
   \E_{p_2} \left[ \A_{p_1} g \right]
   =
   \E_{p_2} [  h - \E_{p_1}h].
\end{equation*}
Solutions of this equation exist under the relatively weak assumption of
existence of a Poincar\'e constant for $P_1$.
This {observation} allows to provide bounds even when regularity
properties of solutions of Stein equations are hard to establish,
e.g.\ for Wasserstein distance between copulas on the unit
square (Example \ref{ex:comparingcopulas}).

{

  \subsection{Overview of the paper}
  \label{sec:overview-paper}

  The paper is structured as follows. In Section \ref{sec:notations-1}
  we provide  notations  that are
  used throughout the paper. The multivariate Stein's density method
  is presented and studied in Section \ref{sec:first-defin-outl},
  first by defining the canonical directional Stein operators and
  classes (Section \ref{sec:general-theory}), then by providing
  sufficient conditions for obtaining Stein identities and
  characterizations (Section \ref{sec:stein-identities}), {and}
  finally by making the connection with current literature through the
  concept of standardizations of the canonical operators (Section
  \ref{sec:standardizations}).  Section \ref{sec:stein-kernel} is
  devoted to the study of Stein kernels, first in general (Section
  \ref{sec:generalities}) then under an assumption of ellipticity of
  the distribution (Section \ref{sec:stein-kern-ellipt-1}); first
  applications towards distributional approximation are also outlined
  (Section \ref{sec:stein-kern-discr}).  Section
  \ref{sec:stein-discrepancies} contains the main applications, namely
  a flexible set of bounds on Wasserstein distance between densities
  admitting a Stein operator {under an
  additional assumption of nested supports} (Section \ref{subsec:gencomp}).
  {{T}he idea of \emph{weak Stein
      equations} and {\it{weak Stein factors}} {is introduced in Section \ref{sec:stein-factors-under}, leading to bounds on
    Wasserstein distance under {the} assumption of existence of a
    Poincar\'{e} constant; {these bounds are illustrated by  comparing copulas.}}

\bigskip

This paper is a complete overhaul
of {a} paper that previously appeared on the arXiv
(https://arxiv.org/abs/1806.03478) {which includes more material,}
such as an excursion into kernelized Stein discrepancies and
goodness-of-fit tests.

}

\section{Notations, gradients and product rules}
\label{sec:notations-1}

We first settle the notation that will be used throughout the paper.
Fix $d \in \N_0$ and let $e_1, \ldots, e_d$ be the canonical basis for
Cartesian coordinates in $\R^d$. Vectors of $\R^d$ are understood as
  column vectors.
Given $x, y \in \R^d$ {the Euclidean scalar product is}
$ \left\langle x, y \right\rangle = x^T y = \sum_{i=1}^d x_i y_i$
(here $\cdot^T$
denotes the
usual
transpose)
with associated norm
$\norm{x} = \sqrt{\left\langle x, x \right\rangle}$.  With
$\mathrm{Tr}(\cdot)$ the trace operator, the {\it{Hilbert--Schmidt
    scalar product}} between matrices ${A}, {B}$ of compatible
dimensions is
$\left\langle {A}, {B} \right\rangle_{\mathrm{HS}} =
\mathrm{Tr}({A}{{^T}} {B})$, with associated norm
$\norm{{A}}_{\mathrm{HS}}^2 := {\mbox{Tr}
  ({A}^T{A})}
$.
For
$W \in \R^{d \times d}$ a matrix,
$\opnorm{W} = \sum_{v \in \R^d, \norm{v} =1} \norm{ W v}$.
{The identity function  on $\R^{d}$ is denoted by $\mathrm{Id}$, so that
$ \Id(x) = x$; for a unit vector $e$,
$ \Id_e = \left\langle \Id, e \right\rangle$
denotes {the marginal projection} in direction $e$; {we abbreviate}
$ \Id_{e_i}{(x)} = \Id_i(x) (= x_i)$.
{T}he identity on function spaces is denote $\mathbf{I}$; the
$d \times d$ identity matrix is denoted by $\mathrm{I}_d$. {Thus, for}
a scalar-valued function $ f $:
\begin{list}{\textbullet}{\topsep 0ex \parsep 0ex \itemsep 0ex}
\item $ \Id f $ is a vector-valued function mapping $ x $ to $ f(x) \, x $;
\item $ \mathbf I f $ is a scalar-valued function mapping $ x $
to $ f(x) $;
\item $ \mathrm I_d f $ is a matrix-valued function mapping $ x $ to
$ f(x) \, \mathrm I_d $.
\end{list}
}

{Let $S^{d-1}$ denote the unit sphere in $\R^d$ and let $e \in S^{d-1}$
be a unit vector in $\R^d$. The directional derivative of a function
$v\Colon\R^d \to\R$ in the direction $e$ is denoted {by the real-valued} function $\partial_e v$.
For $ i=1, \ldots, d$ we write
$\partial_i v$ for the derivative in the direction of the unit vector
$e_i$.}  Higher order derivatives are denoted accordingly.  The
directional derivative of a matrix-valued function}
$ \mathbf{F}\Colon \R^d \rightarrow \R^{m+ {r}} \Colon {x}
\mapsto \mathbf{F}(x)
= \big(f_{ij}(x))_{1\leq i \leq m, 1 \leq j \leq r} $
  is defined component-wise:
$(\partial_e \mathbf{F})_{1\leq i \leq m, 1 \leq j \leq {r}} =
(\partial_e {f}_{ij})_{1\leq i \leq m, 1 \leq j \leq {r}}$ (an
$m\times r$ matrix).

The {gradient} of a smooth function {$v\Colon \R^d \to \R$ is the {$d \times 1$ column} vector
  valued function $\grad v$} with entries
$ (\grad v)_{{i}} = \partial_{i} v$, ${i}=1, \ldots, d$, and the Hessian is the {symmetric $d \times d$}
  matrix-valued function  $\grad^2 v$ with entries
  $(\grad^2v)_{i,j}  = \partial_{ij}v$, $i, j = 1,\ldots, d$;  
\begin{align*}
 \grad v =
  \begin{pmatrix}
    \partial_1 v \\
    \vdots \\
    \partial_d v
  \end{pmatrix}
  \mbox{ and
  }
  \grad^2v =
  \begin{pmatrix}
    \partial_{11} v & \partial_{12} v & \cdots & \partial_{1d}v \\
    \partial_{21} v & \partial_{22} v & \cdots & \partial_{2d}v \\
    \vdots & \vdots & \ddots & \vdots \\
        \partial_{d1} v & \partial_{d2} v & \cdots & \partial_{dd}v
  \end{pmatrix} .
\end{align*}
%

The {divergence} of a
$d$-vector-valued function $\mathbf{v}\Colon \R^d \rightarrow {\R^d}$
with components $v_j, j = 1, \ldots, d$ is the scalar valued function
$$\dv \mathbf{v}\Colon \R^d \rightarrow \R :   x \mapsto
\dv \mathbf{v}(x)
= \sum_{i=1}^d \partial_i v_i(x).$$ 
The divergence of a $m\times d$ matrix-valued function
$\mathbf{F} =  \big(f_{ij}(x))_{1\leq i \leq m, 1 \leq j \leq d}$ 
is
\begin{equation*}
   \dv \mathbf{F} =
  \begin{pmatrix}
    \sum_{j=1}^d \partial_j f_{1j} \\
    \sum_{j=1}^d \partial_j f_{2j} \\
    \vdots \\
        \sum_{j=1}^d \partial_j f_{mj} \\
  \end{pmatrix}
\end{equation*}
a
$m$-column vector; the divergence acts on the rows.
{For}
  $v\Colon \R^d \rightarrow \R$ we also use the Laplacian
  $$\Delta v = \dv (\grad v) = \sum_{i=1}^d \partial_i^2 v.$$ 

Given $v$ and $w$ two sufficiently smooth functions from
$\R^d \to \R$ and $e \in S^{d-1}$,  the directional derivative satisfies the \emph{product
  rule}
\begin{equation}
  \label{eq:46}
 \partial_e (v w) = (\partial_e v) w + v \partial_e w,
\end{equation}
at all points in
$\R^d$ at which all derivatives are defined, implying product rules
for gradients and divergences.
 {We shall mainly consider three instances.}
\begin{enumerate}
\item For
$f \Colon \R^d \to \R$  and $ {g}\Colon \R^d \rightarrow \R$
\begin{align} \label{P1}
  \grad (fg) = { (\grad  f)} {{{g}}} + f (\grad {g}).
\end{align}
\item {For {a matrix-valued function}  $\mathbf{F}\Colon \R^d \rightarrow
    \R^{m \times d}
$ and $g\Colon \R^d \rightarrow \R$,
we have
\begin{equation}\label{eq:39plus}
  \dv( \mathbf{F}g) = (\dv \mathbf{F}) \, g +  \mathbf{F} \grad g
\end{equation}
(an $m$-vector). } {In particular if $ \mathbf F = \mathrm I_d $, then
 $ \dv (\mathrm I_d g) = \grad g $.}
\item
For $\mathbf{F}\Colon \R^d \rightarrow \R^{d \times d}
$ and  $ {g}\Colon \R^d \rightarrow \R$
we have
\begin{align} \label{eq:star}
  \dv ( \mathbf{F}^T \grad g ) =   \left\langle
\dv \mathbf{F}, \grad g\right\rangle + \left\langle
  \mathbf{F}, \grad^2 g \right\rangle_{\mathrm{HS}}
\end{align}
(a scalar).
\end{enumerate}

Throughout this paper, all random vectors are assumed to live on the
same probability space and, unless {explicitly mentioned otherwise},
{are} 
distributed according to a measure $P$ which is (i) absolutely continuous with respect to the Lebesgue measure on $\R^d$, and (ii) with pdf
 $p$ whose support $\Omega_p {= \left\{ x \sth p(x)>0\right\}}$ is an
open subset of $\R^d$; {{unless otherwise stated the pdf is assumed to be continuous on $\Omega_p$}} .}

{T}he collection of all functions whose
components are integrable with respect to $p$ is denoted by $L^1(p)$. {
{For ease of notation, the Lebesgue measure is often left out in
integrals; thus, for} $f \in L^1 (p)$,  we use the notations  $\E_p f = \int_{{\Omega}_p} f p \, = \int_{{\Omega}_p} f(x) \, p(x) \,
\dl x$.}     }
We shall consider several function spaces 
on open sets $ \Omega \subseteq \R^d $.
{
\begin{itemize}
\item {$\Lip_{\loc} (\Omega)$ denotes the set of all locally Lipschitz functions $g \Colon \Omega \to \R$.}
\item $ \Cont^k(\Omega) $ (with possibly $ k = \infty $)
denotes the set of real-valued functions on $ \Omega $
with $k$ continuous derivatives.
\item $ \Cont_\comp^k(\Omega) $ (with possibly $ k = \infty $)
denotes the set of functions belonging to $ \Cont^k(\Omega) $
with a compact support $ K \subseteq \Omega $.
\item $ W^{1, 1}(\Omega)$ denotes the Sobolev space of functions
$g \Colon \Omega \to \R$ such that the Sobolev norm
\[
 \norm{g}_{1,1}
 =
 \int_\Omega |g| + \sum_{i=1}^d \int_\Omega |\partial_i g|
\]
is finite. Here, $ \partial_i $ denotes the \emph{weak} derivative.
{Following \cite{Leo}, p.320, we denote by $ \partial_i $ also the
usual derivative,  the interpretation of $ \partial_i $ depending on the
context. For Sobolev spaces it is understood as the weak derivative.}
\item $ \dot W^{1, 1}(\Omega)$ denotes the homogeneous Sobolev space of
functions $ g \in L^1_\loc(\Omega) $ with
$
 \sum_{i=1}^d \int_\Omega |\partial_i g| < \infty \, .
$
\item $ W^{1, 1}_0(\Omega)$ denotes the space of all functions
$ g \in W^{1, 1}(\Omega) $, such that $ \int_{\Omega} \partial_i g = 0 $
for all $ i = 1, 2, \ldots, d $ or, equivalently, $ \int_{\Omega} \partial_e g = 0 $
for all $ e \in S^{d-1} $. 
\item $ \dot W^{1, 1}_0(\Omega)$ denotes the space
of all functions $ g \in \dot W^{1, 1}(\Omega) $, such that
$ \int_{\Omega} \partial_e g = 0 $ for all $ e \in S^{d-1} $.
\item $ \ACL(\Omega) $ denotes the set of all
Borel measurable functions $ g \Colon \Omega \to \R $, such that for each
$ e \in S^{d-1} $, it is true that
for almost all lines $ L $ parallel to $ e $ with respect
to the $ (d - 1) $-dimensional Lebesgue measure, the restriction of $ g $
to any compact interval $ I $ contained in $ \Omega \cap L $ is absolutely
continuous. Notice that $ \ACL(\Omega) $ is closed under multiplication.
\item $ \ACL^1(\Omega) $ denotes the set of functions
$ g \in \ACL(\Omega) $
with $ \partial_e g \in L^1(\Omega) $ for all $ e \in S^{d-1} $.
\item $ \ACL^1_\loc(\Omega) $ denotes the set of all functions
$ g \Colon \Omega \to \R $ {with the property that}
each $ x \in \Omega $ has an open neighbourhood
$ U \subseteq \Omega $,
such that the restriction of $ g $ to $ U $ belongs to $ \ACL^1(U) $.
\item $ \ACL^1_0(\Omega) $ denotes the set of functions
$ g \in \ACL^1(\Omega) $ with $ \int_{\Omega} \partial_e g = 0 $
for all $ e \in S^{d-1} $.
\end{itemize}
Here 
$\ACL$ stands for `absolutely continuous on lines'.
This concept is closely related with the concept of
\emph{almost differentiability} used by Stein \cite[Definition 1]{S81}.
Indeed, each function in $ \ACL(\R^d) $ is almost differentiable,
but the converse is not true: take a straight line in $ \R^2 $ and let {$g \Colon \R^2 \rightarrow \R$ be defined as}
$ g \equiv 1 $ on that line and $ g \equiv 0 $ elsewhere. Then
$ g $ is almost differentiable, bgt {it} is not in $ \ACL(\R^2) $.}

\section{Stein's multivariate density method}
\label{sec:first-defin-outl}

\subsection{The canonical operator}
\label{sec:general-theory}
{All Stein operators to be constructed in this paper have as building
  blocks the \emph{canonical} {Stein} operator and {the \emph{canonical} Stein} class which we now define.  }

\begin{definition}[Canonical directional Stein
operator]\label{def:steclass} 
{Let
  $e \in S^{d-1}$.}  The \emph{canonical Stein derivative for $p$ in
  the direction $e$} is the differential operator
$\phi \mapsto \mathcal T_{e,p} \phi := {\partial_e( p \phi)}/{p} $
where  $\mathcal{T}_{e, p} \phi \equiv 0$ outside of
$\Omega_p$. The domain of $\mathcal{T}_{e, p}$ is
$\dom(\mathcal{T}_{e, p})$ the collection of all functions
that are differentiable in direction $e$.
  \end{definition}


\begin{definition}[Canonical Stein class] 
  The \emph{canonical scalar Stein class for $p$} is the collection
  $\mathcal{F}_1(p)$ of all functions $f \Colon \Omega_p  \to \R$ {with $fp \in \ACL^1_0(\Omega_p)$}.
  The \emph{canonical Stein
    class for $p$} is the collection $\mathcal{F}(p)$ of all scalar,
  vector, and matrix-valued functions whose components belong to
  $\mathcal{F}_1(p)$.

\end{definition}

{\noindent
To illustrate the Stein class we first give the next result.
{%
\begin{prop}
\label{prop:ACec0}
Let  {$u \in \ACL(\Omega_p)$}
If, in addition, $ \partial_e u \in L^1_\loc(\Omega) $
and $ u $ is compactly supported, then $ \partial_e u \in L^1(\Omega) $ and
$ \int_\Omega \partial_e u = 0 $. {In particular, any scalar function $\phi\in \Cont_{{\comp}}^1(\Omega_p)$
      lies in $\mathcal F_{1}(p)$ {as long as the pdf $p$ is
        continuously differentiable}.}
\end{prop}

\begin{proof}
Since $ u $ has a compact support, so does $ \partial_e u $. Therefore, $ \partial_e u \in L^1(\Omega) $.
Without loss of generality, we may assume that $ e = e_1 $.  For Lebesgue-almost all
$ x' \in \R^{d-1} $, 
the function $ u_{x^\prime}(x_1) := u(x_1, x') $ is
absolutely continuous on all compact intervals contained in $ \Omega_{x^\prime} :=
\{ x_1 \in \R \sth (x_1, x') \in \Omega \} $ and 
$ u'_{x^\prime} \in L^1(\Omega_{x^\prime}) $. Now observe
that $ \Omega_{x^\prime} $ is a union of countably many open intervals. If $ I $ is such an interval,
the restriction of $ u_{x^\prime} $ to $ I $ has a compact support. By the fundamental theorem of
calculus, we have $ \int_I u'_{x^\prime} = 0 $. Since $ u'_{x^\prime} \in L^1(\Omega_{x^\prime}) $,
we can sum over the intervals to obtain $ \int_I u'_{x^\prime} = 0 $. The {main assertion now follows} by
Fubini's theorem. {The last assertion is immediate by taking $u  = p \phi$.}
\end{proof}
}
}
\begin{rem}\label{rem:rem33}
\leavevmode
\begin{enumerate}
\item {The directional Stein operator requires the pdf $p$ to be available only up to a normalizing constant.}
      \item {For a Borel function $ f \Colon \Omega_p \to \R $,
a version of $ f $ belongs to $ \mathcal F_1(p) \cap L^1(p) $ if and only if
$ f p \in W^{1,1}_0(\Omega_p) $}.
    \end{enumerate}
\end{rem}

  \begin{example}[Gaussian directional Stein operators] \label{ex:gauss1}
    {For}  the multivariate Gaussian distribution
    with location $\nu\in \R^d$,  { positive definite covariance matrix}
    $\Sigma\in \R^d\times \R^d$ and
    pdf {$\gamma$}, with $e_i$ a unit  vector,
$$ \mathcal{T}_{e_i,\gamma}f(x) = \frac{\partial_{e_i} (f(x)
  \gamma(x))}{\gamma(x)} = \partial_i f(x) - (\Sigma^{-1} (x-\nu))_i f
(x)$$ for $i = 1, \ldots, d$ and {the scalar Stein class is the class
  of all almost differentiable functions $f \Colon \R^d \to \R$ such that
  $\E_{\gamma}[\norm{\grad f}] < \infty$, see \cite[Lemma
  2]{S81}. Indeed the integrability condition
  $ \int \partial_i (f \gamma) =0 $ is automatically satisfied. }
\end{example}

\begin{example}[Student-$t$ operators]
    \label{ex:studentfirstexample}
{For}  the multivariate Student-$t$ distribution with $k >1$
degrees of freedom, location $\nu\in \R^d$, shape
$\Sigma\in \R^d\times \R^d$ and pdf
\begin{align*}
  t_k(x) = c_{k, d } \mathrm{det}(\Sigma)^{-1/2} \left[
    1+\frac{(x-\nu)^T\Sigma^{-1}(x-\nu)}{k}\right]^{-(k+d)/2}
\end{align*}
with normalizing constant
$c_{k, d} $
and
full support $\R^d$, the directional derivatives are
\begin{align*}
  \mathcal{T}_{e_i, t_k}
  f(x) = \partial_{e_i}f(x)- \frac{k+d}{2}  \left(
  1+\frac{(x-\nu)^T\Sigma^{-1}(x-\nu)}{k}\right)^{-1}(\Sigma^{-1}(x-\nu))_if(x)
\end{align*}
for $i = 1, \ldots, d$. {Using a similar argument as in Example
  \ref{ex:gauss1}, the scalar Stein class contains all functions
  $f \Colon \R^d \to \R$ such that $\E_{t_k}[\norm{\grad f}] < \infty$.
}
\end{example}

We stress the {difference between the}
{domain of the}
canonical directional operator, which simply consists of
differentiable functions, and its Stein class $\mathcal{F}(p)$ which
contains all functions $\mathbf{F}$ such that $\mathcal{T}_{e, p}$ is
integrable with respect to $p$ in all directions and, moreover,
satisfy {that for all $e \in S^{d-1}$,}
\begin{equation}
  \label{eq:6}
  \E_p[\mathcal{T}_{e, p} \mathbf{F}] = 0.
\end{equation}
 Identity \eqref{eq:6} is {our}
  \emph{canonical Stein
  identity}  {in the spirit of \eqref{eq:stlemma}}. It is the identity from which all other Stein identities in this paper will follow.
{In particular,}
  {given any differentiable $\mathbf{F}, g$,
  the product rule \eqref{eq:46}
  {yields}
  the Stein-type
  product rule:
\begin{equation}
  \label{eq:24}
{  \mathcal{T}_{e, p}(\mathbf{F}\, g) =  ( \mathcal{T}_{e, p}
  \mathbf{F}) g    +
\mathbf{F} \, \partial_e g}.
\end{equation}
Plugging \eqref{eq:24}, for well-chosen functions $\mathbf{F}$ {or}
$g$ {(or both)}, into \eqref{eq:6},   then leads to
Stein-type integration by parts identities which are the basis of
{many}
of the forthcoming {results}.
{W}e first 
introduce the appropriate sets of functions.}
{\begin{definition}[Stein adjoint class]\label{def:stadjclass}
Let $ \mathcal F_{1,\loc}(p) $ be the class of all functions
$ f \Colon \Omega_p \to \R $ with
$ f p \in \ACL(\Omega_p) $,
{and denote by $ \mathcal F_\loc(p) $}
the class of all scalar-, vector- and matrix-valued functions $ \mathbf F $
with all components belonging to $ \mathcal F_{1,\loc}(p) $.  To every (scalar, vector, {or} matrix-valued) function
  $\mathbf{F} \in \mathcal F_\loc(p)$ we
  {denote by}
  $ \dom(p, \mathbf{F})$ the collection of all functions
{$ g \in \ACL(\Omega_p) $}
  which
  satisfy
  $ (\mathbf{F}\, g) \in \mathcal{F}(p)$ and $\mathbf{F}
  (\partial_e g) \in L^1(p)$ for all $e\in S^{d-1}$.
\end{definition}
}

 {By definition of $\mathcal{F}_\loc(p)$,  for  all
    $g {\Colon \R^d \to \R} \in \dom(p, \mathbf{F})$, the left
  hand side in \eqref{eq:24} integrates to 0 {under $p$} and the
  integrals of the summands on the right-hand-side (rhs) may be taken
  separately so that
\begin{equation}
  \label{eq:10}
{  \E_p \left[  (\mathcal{T}_{e, p} {\mathbf{F}}) g  +
  {\mathbf{F}} \, \partial_e {g}   \right] = 0}
\end{equation}
for all $e\in S^{d-1}$, generalizing \eqref{eq:6} to a Stein identity
{which is} valid for all $g \in \dom(p, \mathbf{F})$}.
 {Our approach to the Stein operator machinery for $p$ is  to fix some $\mathbf F$ and study the operator (in $g$) that can be obtained from \eqref{eq:10}, viewed through its action on the class  $\dom(p, \mathbf{F})$.  We call this process a \emph{standardization} of the canonical operator, and will dive more deeply into this in the next section.  The quality of the operators obtained in this manner depend on the choice of $\mathbf F$.  If
$\mathbf{F}$ has all components in $\mathcal{F}(p)$ then $\dom(p, \mathbf{F})$}
contains at least the constant
functions $g \equiv 1$.
{
{E}ven if the components of $\mathbf F$ do not belong to $ \mathcal F(p) $, $\dom(p, \bullet)$ remains actually quite large, as we now show.  }
\begin{lemma}\label{lm:measurdeterm}
  {Suppose that all components $ f $ of $ \mathbf{F} $ are such that
  $ f p \in \ACL^1_\loc(\Omega_p) $. Then any 
  function
  $ g \in \Lip_{\loc} (\Omega)$ such that $ \mathbf{F} g p $ is compactly supported belongs to
  $ \dom(p, \mathbf{F}) $.}
\end{lemma}

\begin{proof}
  {By Proposition~\ref{prop:DL1loc}, we have $ f p \in L^1_\loc(\Omega_p) $.}
  {Since $ g \in \Lip_{\loc} (\Omega)$,
  it belongs to $ \ACL(\Omega_p) $.
  Moreover,  {as} $ g $ and $ \partial_e g $ 
  both bounded for any
  $ e \in S^{d-1} $,}
  {$ f p (\partial_e g) \in L^1_\loc(\Omega_p).$
  Since $ f p (\partial_e g) $ {has compact support},
  it 
  belongs to  $ L^1(\Omega_p) $.
  From the product rule for the classical derivatives and
  {local}
  boundedness of $ g $ and
  $ \partial_e g $, it follows that $ f g p \in \ACL^1_\loc(\Omega_p) $.
  {As} $ f g p $ {has compact support,}
  Proposition~\ref{prop:ACec0}
  {yields} that $ f g p \in \ACL^1_0(\Omega_p) $.}
\end{proof}
 An in-depth discussion    {of $\dom(p, \mathbf{F})$} in the
  one-dimensional case, along with simple sufficient conditions, can
  be found in \cite[Section 2.3]{ernst2020first}.

\subsection{Standardizations}
\label{sec:stein-identities}

The canonical operators are our building blocks for a large family of
  Stein operators, as follows.

\begin{definition}
  \label{def:standardizations-1}
A differential operator
$\mathbf{G} \mapsto \mathcal{A} \mathbf{G}$ acting on {scalar-, or vector-, or matrix-valued} valued functions  is a
\emph{standardization} of the canonical operators
$\mathcal{T}_{e_i, p}, i = 1, \ldots, d$ if there exist linear
operators $ \{ \mathbf{A}_1, \ldots, \mathbf{A}_d \} $ and
$\{ \mathbf{T}_1, \ldots, \mathbf{T}_d\} $ such that
  \begin{align*}
\AAA {\bf G}
  =
    \sum_{i=1}^d{\mathbf{A}}_i \mathcal{T}_{e_i, p} ({\mathbf{T}}_i
    {\mathbf{G}} ) .
      \end{align*}
The associated Stein class is the collection
$\mathcal{F}(p, \mathcal{A}) $ of all functions $\mathbf{G}$ of
appropriate dimension  such that
$\AAA {\bf G}$ is integrable with mean 0 under $p$.
\end{definition}

{
\begin{rem}
We have kept the definition vague in our assumptions on the    operators $ \mathbf A_1, \ldots, \mathbf A_d $ and
$ \mathbf T_1, \ldots, \mathbf T_d $ which are allowed to be any linear operators
on the corresponding function spaces. In particular, they are allowed to be
multiplications by fixed functions, shift or differential operators which  typically shrinks the associated Stein class
$ \mathcal F(p, \mathcal A) $ to a very small set: if $ \E_p h = 0 $,
we typically do not have $ \E_p \mathbf A_i h = 0 $.
In practice, however, the operators
$ \mathbf A_1, \ldots, \mathbf A_d $ we consider are in fact left
compositions by linear maps acting between scalars, vectors or matrices.
This preserves the size of the associated Stein class $ \mathcal F(p, \mathcal A) $.  In particular if $\mathbf{A}_i = {\mathbf{T}}_i = \mathbf{I}$ (the identity functional operator) for
$i=1, \ldots, d$, then the canonical Stein class $\mathcal{F} (p)$ is
a subset of $\mathcal{F}(p, \mathcal{A}) $.
\end{rem}
}

\medskip
In the sequel, we will say that a linear operator is \emph{a} Stein
operator for $p$ if it is a standardization of the canonical operators
obtained through Definition \ref{def:standardizations-1}. Such
standardizations were studied in \cite{LRS16} in dimension $d=1$.
{Among the many possible options, two  stand out most naturally:
\begin{itemize}
    \item $\mathbf{A}_i =  e_i$  and $\mathbf{T}_i = 1$ for $i = 1,\ldots, d$ leading to what we call the gradient operator,
    \item $\mathbf{A}_i = \mathbf{T}_i = 1$ for $i = 1,\ldots, d$ leading to what we call the divergence operator.
\end{itemize}
}

  \begin{definition}[Gradient and divergence operator]\label{def:gradiv}
    {The \emph{Stein gradient operator} acting on real-valued
    functions $g \Colon \R^d \to \R$ is
   $$\mathcal{T}_{\grad, p}  g  {= \sum_{i=1}^d e_i\mathcal{T}_{e_i, p}  g}  = \frac{\grad \left( p g
     \right)}{p}.$$
   The \emph{Stein divergence operator} acting on vector-valued functions $\mathbf{g} \Colon \R^d \to \R^d$ is
  $$\mathcal{T}_{\dv , p}  \mathbf{g} = { \sum_{i=1}^d \mathcal{T}_{e_i, p}  \mathbf{g}_i = }
  \frac{\dv \left( p \mathbf{g} \right)}{p}.$$
The Stein divergence operator {acting} 
on matrix-valued functions
  $\mathbf{G}\Colon \R^d \rightarrow \R^{m \times d}$  {is}
  $$(\mathcal{T}_{\dv , p}  \mathbf{G})_i = { \sum_{j=1}^d \mathcal{T}_{e_j, p}  \mathbf{G}_{i} = }
  \frac{\dv \left( p \mathbf{G}_i \right)}{p} \quad \quad  {\mbox{
for } i = 1, \ldots, m.}$$ }
  \end{definition}

  \begin{example}[Gaussian operators]
    \label{ex:Gaussfirstexample} Take $\nu \in \R^d$ and $\Sigma$ an invertible matrix in $\R^{d\times d}$. For $\mathcal{N}(\nu, \Sigma)$ with pdf $\gamma$,
    \begin{equation} \label{eq:tgrad1gau}
      \mathcal{T}_{\grad, \gamma}g(x) =   \grad g(x)  - \Sigma^{-1} (x-\nu) g(x)
    \end{equation}
    for all $g \Colon \R^d \to \R$  ($\mathcal{T}_{\grad, \gamma}g(x)$ is    a vector)   and
    \begin{equation}\label{eq:tdivgau}
      \mathcal{T}_{\dv , p}\mathbf{g}(x) =
\dv \mathbf{g}(x) -  \left\langle \Sigma^{-1} (x-\nu),  \mathbf{g}(x)\right\rangle
\end{equation}
for all $\mathbf{g} \Colon \R^d \to \R^d$ ($\mathcal{T}_{\dv , \gamma}\mathbf{g}(x)$
is a scalar).
  \end{example}

{From \eqref{eq:10} we thus}
inherit an entire collection of Stein identities
{for functions defined on an open set $ \Omega $ and with
components belonging to $ \ACL(\Omega) $.}
In the sequel we
will make use of the following {instances}:
\begin{enumerate}
  \item For all scalar functions $f, g \Colon \Omega \rightarrow \R$ we
  have {from \eqref{P1}}
  \begin{equation}
  \label{eq:prodrulenabla}
    \mathcal{T}_{\grad, p}(f g)
    =
    (\mathcal{T}_{\grad, p} f) \, g + f \grad  g
    \, .
  \end{equation}
  For each
  {$ f \in \mathcal F_{1,\loc}(p) $}
  we obtain the Stein identity
  \begin{equation}
  \label{eq:8}
    \E_p \bigl[ (\mathcal{T}_{\grad, p} f) \, g \bigr]
    =
    - \E_p[f \grad g]
    \, ,
  \end{equation}
  which holds for all $g \in \dom(p, f)$.
  \item For all matrix-valued functions
  $\mathbf{F}\Colon \Omega \rightarrow \R^{m \times d}$ and
  {all ${g}\Colon\R^d\to {\R}$,}
  {using \eqref{eq:39plus},}
  \begin{equation}
  \label{eq:42}
    \mathcal{T}_{\dv, p} (\mathbf{F} \, g)
    =
    (\mathcal{T}_{\dv, p} \mathbf{F}) \, g
      +
    \mathbf{F} \grad g
    \, .
  \end{equation}
  For each
  {$ \mathbf{F} \in \mathcal F_{\loc}(p) $}
  we obtain the Stein identity
  \begin{equation}
  \label{eq:37F}
    \E_p \bigl[ (\mathcal{T}_{\dv, p} \mathbf{F}) \, g \bigr]
    =
    - \E_p[ \mathbf{F} \grad g]
    \, ,
  \end{equation}
  which holds for all ${g} \in \dom(p, \mathbf{F})$.
  \item {For all vector-valued function
  $ \mathbf f \Colon \Omega \to \R^d$ and all $ g \Colon \R^d \to \R $, using \eqref{eq:42},
  \begin{equation}
  \label{eq:39plllllus}
    \mathcal T_{\dv, p} (\mathbf f g)
    =
    (\mathcal T_{\dv, p} \mathbf f) \, g
      +
    \langle \mathbf f, \grad g \rangle
    \, .
  \end{equation}
  For each
  {$ \mathbf{f} \in \mathcal F_{\loc}(p) $}
  we obtain the Stein identity
  \begin{equation}
  \label{eq:37f}
    \E_p \bigl[ (\mathcal{T}_{\dv , p} \mathbf{f}) \, g \bigr]
    =
    - \E_p \bigl[ \langle \mathbf{f}, \grad g \rangle \bigr]
    \, .
  \end{equation}
  which holds for all ${g} \in \dom(p, \mathbf{f})$.}
  \item For all matrix-valued functions
  $\mathbf{F}\Colon \Omega \rightarrow \R^{d \times d}$ and all
  $ {g} \Colon \R^d \to {\R} $, {using \eqref{eq:star}},
  \begin{equation}
  \label{eq:9}
    \mathcal{T}_{\dv, p} \bigl( \mathbf{F}^T \grad g \bigr)
    =
    \left\langle
      \mathcal{T}_{\dv, p} \mathbf{F}, \grad g
    \right\rangle
      +
    \left\langle \mathbf{F}, \grad^2 g \right\rangle_{\mathrm{HS}}.
  \end{equation}
  For each
  {$ \mathbf{F} \in \mathcal F_{\loc}(p) $}
  we obtain the Stein identity
  \begin{equation}
  \label{eq:25bis}
    \E_p \bigl[ \left\langle
      \mathcal{T}_{\dv , p} \mathbf{F}, \grad g
    \right\rangle \bigr]
    =
    - \E_p \bigl[ \left\langle
      \mathbf{F}, \grad^2 g
    \right\rangle_{\mathrm{HS}} \bigr]
  \end{equation}
  which holds for all $g$
  {with}
  $\grad g \in \dom(p, \mathbf{F})$.
\end{enumerate}

\begin{example}[Gaussian Stein {identities}] Following up on Examples {\ref{ex:gauss1} and}
  \ref{ex:Gaussfirstexample},
 take $\nu \in \R^d$ and an invertible matrix $\Sigma \in \R^{d\times d}$  and {let}  $\gamma$  denote the pdf of $\mathcal{N}(\nu, \Sigma)$.  Taking {the constant function} $f = 1$ in
    \eqref{eq:8}, {which is in ${\mathcal F}_1(\gamma)$,}  we reap
    \begin{equation}\label{eq:gaussianstandard}
      \E_{\gamma} [ \Sigma^{-1} ({\rm Id}-\nu) g   ] =
      \E_{\gamma}  [\grad g ]
    \end{equation}
    for all $g \Colon \R^d \to \R$
    belonging to $\dom(\gamma, 1)$.  {We could have also obtained this directly from  operator \eqref{eq:tgrad1gau}.}
    Similarly, now taking {the constant function}
    $\mathbf{F} = \Sigma$ in \eqref{eq:25bis}, which is also in ${\mathcal F}(\gamma)$,
   we
   {recover}
     the
    classical second order Stein identity {\eqref{eq:stlemma}} for the Gaussian. {This could have been obtained directly from operator \eqref{eq:tdivgau} applied to functions $\mathbf{g} = \Sigma \grad g$.}
     Example
    \ref{sec:stein-oper-Gauss} {provides other choices of $\mathbf{F}$}.
  \end{example}

  \subsection{Stein characterizations}\label{sec:stein-char}

  {Identity \eqref{eq:10} (and the corresponding identities from
    Section \ref{sec:stein-identities}) holds when integrating under
    $p$. Our purpose now is to deduce ``reverse'' implications when
    changing the measure $p$ to some other measure $q$.} The starting
  point is the following ``directional'' observation.

\begin{prop}[Stein characterizations]\label{sec:diff-stein-oper-directional}
  Let $p$
  and $q$ be pdf{s}
  {with}
  {$\Omega_q \subseteq \Omega_p$. Suppose that}
  {$\Omega_p  $ is connected and}
  {$ q/p  \in \Lip_\loc {(\Omega_p)}$.}
  {Fix $ f \in \mathcal F_{1,\loc}(p) $ with}
  {$f \neq 0 $ over $\Omega_p$}. Then $p=q$ if and only if
  $ \E_q [g \mathcal{T}_{e, p}f] = - \E_q [f \partial_eg] $
  for all $ e\in S^{d-1} $ and all $ g \in \dom(p, f) $
  {with $ g (\mathcal T_{e,p} f), f (\partial_e g) \in L^1(q)$.}
\end{prop}

\begin{proof}
{If $p=q$, then the identity follows from \eqref{eq:10}.  For the
  {opposite}
  direction,
  {assume that} $
    \E_q [g \mathcal{T}_{e, p} f]
    =
    - \E_q [f \, \partial_e g]
  $. Then
  with \eqref{eq:24}, we have
  \[
   0
   =
   \E_q \bigl[ \mathcal{T}_{e, p} (fg) \bigr]
   =
   \E_p \left[
     \bigl( \mathcal{T}_{e, p} (fg) \bigr) \, \frac{q}{p}
   \right]
   =
   - \E_p \left[
     f g \, \partial_e \! \left( \frac{q}{p} \right)
   \right] ,
  \]
  where the last identity follows by \eqref{eq:10} applied with
  {$ q/p $ in place of $ g $}
  and $ \mathbf{F} = f g $,}
  {provided that $ f g \in \mathcal F_{1,\loc}(p) $ and
  $ q/p \in \dom(p, f g) $. In particular, by Lemma \ref{lm:measurdeterm},
  the latter condition is satisfied by all
  $g \in \Cont_\comp^\infty(\Omega_p)$.
  Therefore, $ f p \, \partial_e (q/p) $ vanishes Lebesgue-almost
  everywhere over
  $ \Omega_p $. The same is true for $ \partial_e (q/p) $ because
  $ f p $ vanishes nowhere over $ \Omega_p $.
  Since this is true for all $e \in S^{p-1}$ and $ \Omega_p $ is
  connected, the ratio $ q/p $ is Lebesgue-almost everywhere
  constant on $ \Omega_p $ by Proposition \ref{prop:D0Connected}.
  However, as $ q/p $ is continuous, it must be constant on the entire
  $ \Omega_p $.}
  {In particular, $ \Omega_p = \Omega_q $ automatically.}
  Since both {$p$ and $q$} integrate to 1 on their support,
  $p=q$ follows.
\end{proof}

{
\begin{rem}
  Proposition \ref{prop:D0Connected}, which was applied in the proof,
  can be regarded as {the extreme case of the}
  \emph{Poincar\'e inequality}
  -- see Definition \ref{def:poincareconstant} -- {in which the inequality
  {turns to}
  an equality}. Similarly, Stein
  characterizations can be regarded as {the extreme case of inequalities for Stein disrepancies in which the Stein discrepancy takes on the value 0.} 
  In Section
  \ref{sec:stein-factors-under}, we derive bounds which are
  based precisely on the Poincar\'e inequality.
\end{rem}
}

{The directional characterizations from Proposition
  \ref{sec:diff-stein-oper-directional} lead} to a wide variety of
characterizations, {among which we} highlight the following
 two.


\begin{prop}[Some Stein characterizations]
  {Let $p$ and $q$ be as in
  Proposition \ref{sec:diff-stein-oper-directional}.}
  \begin{enumerate}
    \item \label{sec:diff-stein-oper}
    Let
    $f\Colon \R^d \rightarrow \R \in  {\mathcal{F}_{1,\loc}(p)}$.
    {Then} $p=q$ if and only if
    \begin{equation*}
     \E_q \bigl[ g \mathcal{T}_{\grad, p} f \bigr]
     = - \E_q \bigl[ f \grad g \bigr]
    \end{equation*}
    for all
    $g\Colon \R^d \rightarrow \R \in \dom(p,f)$.
    \item Let
    $\mathbf{F}\Colon \R^d \rightarrow \R^{m \times d} \in
     {\mathcal{F}_{\loc}(p)}$
    {be such that the matrix $ \mathbf F(x) $ has zero nullity for Lebesgue-almost all $ x \in \Omega_p $.}
    {Then} $p=q$ if and only if
    \begin{equation*}
     \E_q \left[ g \mathcal{T}_{\dv, p}  \mathbf{F} \right]
     =
     - \E_q \bigl[ \mathbf{F} \grad {g}  \bigr]
    \end{equation*}
    for all
    $g \Colon \R^d \rightarrow \R \in \dom(p, {\mathbf{F}})$.
  \end{enumerate}
\end{prop}

\begin{proof}
  {The first
  {part}
  follows directly from Proposition \ref{sec:diff-stein-oper-directional}.}
  {The second one can be proved similarly: if $ p = q $, the desired
  equality follows from \eqref{eq:37F}. For the opposite direction,
  \eqref{eq:42} and \eqref{eq:37F} give
  $ \E_p \bigl[ \mathbf{F} g \grad (q/p) \bigr] = 0 $
  for all $ g \in \Cont_\comp^\infty(\Omega_p) $. Therefore,
  $ \mathbf{F} \grad (q/p) $ vanishes Lebesgue-almost everywhere
  over $ \Omega_p $. Since $ \mathbf F(x) $ has zero nullity
  for Lebesgue-almost all $ x \in \Omega_p $, $ \grad (q/p) $
  also vanishes Lebesgue-almost everywhere. As a result, $ q/p $
  is constant {on} 
  $ \Omega_p $ and the result follows.}
\end{proof}

  \subsection{The score function and the Stein kernel}
\label{sec:standardizations}

We conclude the section with three key examples of Stein operators.

\subsubsection{Gradient based first order operators and the score function}
 \label{sec:grad-based-oper}

 We first focus on operator \eqref{eq:prodrulenabla} and its companion
  Stein identity \eqref{eq:8}.
{We deduce} a family of Stein operators for $p$
  obtained by fixing some {differentiable $f$} and considering the
  first order operator $ \mathcal{A}_p := \mathcal{A}_{f, p}$ given by
  $${g} \mapsto \mathcal{A}_p {g}:=\mathcal{T}_{\grad,
    p}(f {g})$$ with associated Stein class
  ${g}\Colon\R^d\to \R \in \mathcal{F}(\mathcal{A}_p) =
  \dom(p, f)$.  Each particular  a.s. differentiable $f$
  thus
  gives rise to a particular operator, acting on a particular class of
  functions, entailing a particular Stein identity.
One {choice for $f$} stands out: $f = 1$.
\begin{definition}[Score function and operator] \label{def:multiscore}
  Let $p$ be differentiable. The \emph{score function} of $p$ is the
  function
\begin{align*}
  \rho_p = \mathcal{T}_{\grad, p}1 = \grad \log p = \frac{\grad p}{p}
\end{align*}
(still with the convention that $\rho_p \equiv 0$ outside of $\Omega_p$).
The \emph{score-Stein operator} is the vector-valued operator
\begin{equation}
\label{eq:1}
      \mathcal{A}_p = \grad   +  \rho_p \mathbf{I}
    \end{equation}
    acting on differentiable functions $g \Colon \R^d \to \R$.
\end{definition}

Operator \eqref{eq:1} is the most classical Stein operator for
multivariate $p$. It is {particularly useful in the context of
  Stein's method}
when
$\mathcal{F}_{{1}}(p)$ contains all  constant functions. The latter assumption  holds if $p$ is a
differentiable pdf such that $\partial_i p$ is integrable for all
$i = 1, \ldots, d$ and $\int \partial_i p = 0$ (this is not guaranteed
  if we only assume that $p$ is continuously differentiable:
 consider for example $p(x) \propto x^2 \sin (x^{-2})$ on $(0,1)$).  Then we can
take $\mathcal{F}(\mathcal{A}_p) = \dom(p, 1)$;
{the}
 resulting (characterizing) Stein identity
is
 \begin{align*}
\E_p \left[ \rho_p \, g
   \right] = -  \E_p \left[ \grad g \right] \mbox{ for all }g \in
   \mathcal{F}(\mathcal{A}_p).
 \end{align*}
{If $1 \notin \mathcal{F}_1(p)$ then a version of this identity still
   holds, but with integration constants which must be taken into
   account in the various identities, see Theorem \ref{theo:comap}. }

 \begin{example}[Gaussian score operator]\label{ex:scorestinop}
  {For $\mathcal{N}(\nu, \Sigma)$ with pdf $\gamma$, we have}
   $\rho_{\gamma}(x) = \grad \log \gamma(x) = -\Sigma^{-1}
   (x-\nu)$; the corresponding operator is
   \begin{equation*}
     \mathcal{A}_{\gamma}g(x) = \grad g(x) -\Sigma^{-1}
     (x-\nu) g(x).
   \end{equation*}
    \end{example}

    The flexibility of choice of functions $f$ clarifies a
      relationship between Stein operators, as follows.

 \begin{example}[{Gradient based operators and change of measure}]
 \label{sec:grad-based-oper-change}

 Suppose that  {$f$}
is such that $fp$ is a pdf
 which is continuously differentiable
 on its support
 $\Omega_p$. Then, for {any ${g}$  which is {continuously differentiable}
  on
 $\Omega_p$,}
 \begin{align*}
 \mathcal{T}_{\grad, p} \left( f  {g}\right) = \frac{\grad (  {g} f p)}{p}
= f  \frac{\grad (  {g} f p)}{f p}
= f  \mathcal{T}_{\grad, fp} ( {g} ).
\end{align*}
Such a change of measure operation can be useful for finding solutions
of Stein equations.
The   Stein equation
  $$ \mathcal{T}_{\grad, p} \left( f  {g}\right) = h - \E_p h$$
  then translates into
   $$  \mathcal{T}_{\grad, fp} ( {g} )=  1/f ( h  -  \E_p h).$$
For instance, if $p$ is log-concave then, under
additional regularity conditions, solutions of the Stein equation with
bounded derivatives are available (see
\cite{mackey2016multivariate}).
{In Proposition \ref{prop:l2bound} we shall show a similar result under the assumption of a Poincar\'{e} constant.} Such bounds may also be applied even
when $p$ is not log-concave {or does not possess a Poincar\'{e} constant}, but there exists $f$ such that $fp$ is
{has this desired property}  and the change of measure illustrated above can be
applied.  {As an illustration, the
 one-dimensional Beta distribution with pdf
$p(x) = x^{\alpha-1} (1-x)^{\beta-1}/B(\alpha, \beta)$ on $(0,1)$ is
not log-concave if $\min(\alpha, \beta)<1$.  Choosing
$f(x) = B(\alpha, \beta) x(1-x) / B(\alpha+1, \beta+1)$ results in
$fp$ being log-concave.}
{However, as a different Stein equation is being
solved, Stein's method need not bound the error in the same metric. For log-concave densities,
such as $ f p $, one can for instance obtain bounds in the Wasserstein distance, while for $ p $, the bound
is expressed in a different metric.}
{This change of measure leads to $p$ and $fp$ to be nested in the sense of {Section \ref{subsec:gencomp}}, where the focus is on Wasserstein distance.}
 \end{example}

\subsubsection{Divergence based first order operators and Stein kernels}
\label{sec:diverg-based-first}

We now start from the product rule {\eqref{eq:42} and the
  corresponding Stein identity \eqref{eq:37F}, with
  $\mathbf{F}\Colon\R^d\to\R^{m\times d}$ and $g \Colon \R^d\to \R$.}  We
deduce a family of Stein operators for $p$ obtained by fixing
  $\mathbf{F}$ and considering
$$g \mapsto \mathcal{A}_p  g  = \mathcal{T}_{\dv , p} \big(
\mathbf{F} g\big)
= (\mathcal{T}_{\dv , p}
\mathbf{F}) g + \mathbf{F} \grad g $$ with associated Stein class
$\mathcal{F}( \mathcal{A}_p) := \dom(p, \mathbf{F}) $.

Taking {$\mathbf{F} = I_d$ the identity matrix} results in
\eqref{eq:1}.  If $p$ has finite covariance $\Sigma$, a more natural
choice may be $\mathbf{F} \equiv \Sigma$, but the resulting operator
is not intrinsically different from \eqref{eq:1}.
A 
different group of choices for $\mathbf{F}$ stands out: \emph{Stein
  kernels}, which we define as follows.

\begin{definition}[Stein kernel and operator]\label{def:multiteikdef}
Suppose $p$ has finite mean
  $\nu \in \R^d$. For each    {unit vector}
  $e_i \in \R^d$,
  $i=1, \ldots, d$, \emph{a Stein kernel} for $p$ \emph{in
    direction $e_i$} is any vector field
  $x \mapsto \tau_{i}(x) \in \R^d$
such that  {$\tau_{i} p \in \ACL^1_\loc(\Omega_p)$}
and, for {Lebesgue-almost} all $x \in \Omega_p$,
  \begin{equation*}
    \mathcal{T}_{\dv , p}\big(\tau_{i}\big)(x) = \nu_i- x_i.
  \end{equation*}
  A \emph{Stein kernel} is any square {matrix-valued function}
  $ \pmb\tau =(\tau_{i,j})_{1 \leq i,j \leq d}$ such that each {row}
  $ \pmb \tau_i = (\tau_{i1}, \ldots, \tau_{id})$ is a Stein kernel
  for $p$ in direction $e_i$;
  \begin{equation} \label{eq:steinkenrlnelnr} {\mathcal T_{\dv , p} \boldsymbol \tau = \nu - \Id  \mbox{ a.e.\,on } \Omega_p. }
  \end{equation}
  {For a given Stein kernel
    $\pmb \tau$} the $\pmb\tau$-\emph{kernel-Stein operator} {acting
    on scalar functions $g$} is the $\R^d$-valued operator
\begin{align*}
  \mathcal{A}_pg(x) =  \mathcal{T}_{\dv , p}\big({\pmb
    \tau} g\big) (x) =   \pmb\tau(x) \grad g(x)- (x-\nu)g(x)
\end{align*}
with domain {$\mathcal{F}(\mathcal{A}_p)$}.
  \end{definition}


  \begin{example}[Gaussian Stein kernel] {For $\mathcal{N}(\nu, \Sigma)$ with pdf $\gamma$,}
 as we shall see in Section
    \ref{sec:stein-kernel}, the constant function with value $\Sigma$
    is a Stein kernel for $\gamma$; the corresponding operator is
   \begin{equation*}
     \mathcal{A}_{\gamma}g(x) = \Sigma \grad g(x) -
     (x-\nu) g(x).
   \end{equation*}
   The only difference to the score-Stein operator from Example
   \ref{ex:scorestinop} is the position of the covariance matrix
   $\Sigma$.
  \end{example}

  {We note that the definition of a Stein kernel does not make any assumption about its nullity, but in view of Proposition \ref{sec:diff-stein-oper-directional}, often a Stein kernel with zero nullity may be desirable.}

  As argued in the Introduction, Stein kernels play an important role
  in the study of probability distributions. We defer a detailed study
  (existence, construction, examples, applications) to Section
  \ref{sec:stein-kernel}.

\subsubsection{Divergence based second order operators}
  \label{sec:diverg-based-second}

Finally we consider the product rule  \eqref{eq:9} and
corresponding Stein identity \eqref{eq:25bis}, for some
differentiable matrix valued function $\mathbf{F}$.  {We deduce}  the
family of operators
\begin{align}
\label{eq:4}
  g \mapsto \mathcal{A}_p g
  =
  \mathcal{T}_{\dv, p} \bigl( \mathbf{F}^T \grad g \bigr)
  =
  \left\langle
    \mathcal{T}_{\dv, p} \mathbf{F}, \grad g
  \right\rangle + \left\langle
    \mathbf{F}, \grad^2 g
  \right\rangle_{\mathrm{HS}}
\end{align}
with corresponding class $\mathcal{F}(\mathcal{A}_p)$ the collection of
$g$ such that $\grad g \in \dom(p, \mathbf{F})$.

{In line with the previous considerations, two
   choices for $\mathbf{F}$ stand out:}
\begin{enumerate}[(i)]
\item $\mathbf{F} = I_d$, so that
\begin{equation}
  \label{eq:Acla}
  \A_{p}  g  =  \left\langle \grad \log p , \grad g \right\rangle
  +  \Delta g
\end{equation}
with class $\mathcal{F}(\mathcal{A}_p)$ the collection of $g$ such
that $\grad g \in \dom(p, I_d)$;
\item $\mathbf{F} = \pmb \tau$ a  Stein kernel for  $p$ so that
\begin{equation}\label{eq:courtadekernel}
  \mathcal{A}_p g =    \langle \nu - {\mathrm{Id}}, \grad g   \rangle
  + \langle \pmb\tau, \grad^2 g\rangle_{\mathrm{HS}}
\end{equation}
with domain $\mathcal{F}(\mathcal{A}_p)$ the collection of $g$ such
that $\grad g \in \dom(p, \pmb\tau)$.
\end{enumerate}

\begin{example}
[Gaussian second order operator]  \label{ex:steinfiffufi} {For
  $\mathcal{N}(\nu, \Sigma)$ with pdf $\gamma$,}
   \eqref{eq:Acla}  yields
   \begin{equation*}
     \mathcal{A}_{\gamma}g(x) = \Delta g(x) - \left\langle \Sigma^{-1}(x
       - \nu), \grad g(x)  \right\rangle
   \end{equation*}
   whereas \eqref{eq:courtadekernel} with $\pmb \tau = \Sigma$ yields
      \begin{equation*}
     \mathcal{A}_{\gamma}g(x) = \left\langle \Sigma, \grad^2 g
     \right\rangle_{\mathrm{HS}} - \left\langle  x
       - \nu, \grad g(x)  \right\rangle.
   \end{equation*}

{
}
 \end{example}

\begin{example}[Generators of diffusions]
Infinitesimal generators of
    multivariate diffusions as studied e.g.\ in
    \cite{gorham2016measuring, fang2018multivariate} are of the form
    \eqref{eq:4}, with $\mathbf{F}$ imposed by the properties of the
    underlying process.  Indeed consider
    a measure $\mu$ with pdf  $p$ in $\Cont^2(\R^d)$ which is the
    ergodic measure of the It\^o stochastic differential equation
\begin{align*}
  \dl Z_t  =  b(Z_t) \,\dl t  + \sigma(Z_t)
  \,\dl  B_t,
  \quad Z_0 = x,
\end{align*}
where $B_t$ is a standard $d$-dimensional Brownian motion,
$b \Colon \R^d \to \R^d$ is a sufficiently regular (typically Lipschitz)
drift coefficient and $\sigma \Colon \R^d \to \R^{d \times m}$ is a
sufficiently regular (again, typically Lipschitz) diffusion
coefficient.  Let  $\mathbf{a}  = \sigma \sigma^T$ denote the
covariance
coefficient of the process.  The Stein operators  from \cite[Theorem
2]{gorham2016measuring})  are of the form
\begin{align*}
  \mathcal{A}_{p} g & =  \frac12 \mathcal{T}_{\dv , p} \left( (\mathbf{a} +
                         \mathbf{c})\grad g \right)
\end{align*}
where $\mathbf{c}$ is a differentiable skew-symmetric
matrix-valued function such that $\mathcal{T}_{\dv , p}\mathbf{c}(x)$ is
nonreversible.
\end{example}

\section{More about Stein kernels}
\label{sec:stein-kernel}
In this section we study the Stein kernels: their definition,
construction and some applications towards distributional comparisons.

\subsection{Existence and  construction}
\label{sec:generalities}

{From Definition \ref{def:multiteikdef}, a matrix-valued function
  $\pmb \tau$ is a Stein kernel for a continuously differentiable
pdf  $p$ if and only if $\pmb \tau \, p$ is continuously
  differentiable on $\Omega_p$ and {\eqref{eq:steinkenrlnelnr} holds}.
By the product rule
  \eqref{eq:39plus}, the score function and the divergence operator
  are linked through
 \begin{equation} \label{scorelink} \mathcal{T}_{\dv , p} \mathbf{F} = \mathbf{F} \rho_{p} +
  \dv (\mathbf{F}).
  \end{equation}
  Hence for a pdf $p$ with
  mean $\nu$, any  continuously differentiable matrix-valued function
  $\pmb \tau$
  satisfying  Lebesgue-almost
surely on  $\Omega_p$,
  $$ \pmb \tau\,  \rho_p  + {\rm div} ( \pmb \tau ) = \nu -
  \mathrm{Id} $$ is a Stein kernel for $p$.  This leads to the
  following simple explicit construction of a {family of} Stein
  kernels.

 \begin{lemma}\label{lem:de}
   A {Stein kernel can be constructed as}
  $$ {\pmb \tau} =  \frac{1}{\alpha + \beta} \mathbf{F}$$
   {where $ \mathbf{F}$ is}
   a
   continuously
differentiable matrix 
such that for some constant
   $\alpha, \beta \in \R$, $\alpha + \beta \ne 0$ and a function
   $\mathbf{r}\Colon \R^d \rightarrow \R^d$,
  \begin{align}
  \mathbf{F}(x)  \rho_{p} (x) &= \alpha (\nu - x ) + \mathbf{r}(x) \label{constr1} \\
  \dv ( \mathbf{F}(x) ) &= \beta (\nu - x) - \mathbf{r}(x) \label{constr2} .
  \end{align}
 \end{lemma}

 \begin{proof}
  Inserting \eqref{constr1} and \eqref{constr2} into \eqref{scorelink} gives the assertion.
  \end{proof}

 \begin{rem}\label{rem:impo}
     Using product rule \eqref{eq:42} and
a density argument, it is straightforward to see that
showing some differentiable matrix valued function is a Stein kernel
can be done equivalently by checking that {the Stein identity}
 \begin{equation}
  \label{eq:defstek2}
  \E_{{p}} \left[ \pmb{\tau} \grad g
  \right] = \E_{{p}} \left[    ( {{\rm Id}}-\nu) g \right]
\end{equation}
holds at least for $g \in \Cont_\comp^\infty(\Omega_p)$;
this concurs with the original
definitions of {multivariate} Stein kernels as proposed e.g.\ in
\cite{nourdin2013integration}.

 \end{rem}

\begin{rem}
{Equation \eqref{eq:defstek2} can be used to relate {the moments of
    ${p}$ with the moments of a Stein kernel}. For instance, if
  $\pmb \tau$ is a Stein kernel for $p$, if $p$ has finite variance
  {$\Sigma$},
    and if ${\mathrm{Id}}-\nu \in \dom(p,\pmb \tau)$ {(a
    mild condition)}, it follows
  directly from \eqref{eq:defstek2}  that}
$ \E \left[ \pmb \tau \right] = {\Sigma}.$
\end{rem}

\begin{example}[Gaussian Stein kernel]
  {
  For {$\mathcal{N}(\nu, \Sigma)$},
\eqref{eq:gaussianstandard}}
entails that $\pmb \tau(x) = \Sigma$ is a Gaussian Stein kernel.
Example \ref{sec:stein-oper-Gauss} will illustrate
 that it is not the only one.
\end{example}

{In the one-dimensional case, the first order {differential equation}
{corresponding to
  \eqref{eq:defstek2}} is easy to solve.
  {If $ \Omega_p $ is an interval (possible infinite), then
  }
  \[
   \tau(x)
   =
   \frac{1}{p(x)} \int_{-\infty}^x (\nu-u) \, p(u) \,\dl u
   = - \frac{1}{p(x)} \int_{x}^\infty (\nu-u) \, p(u) \,\dl u
  \]
  {is the unique Stein kernel with $ \tau(x) \, p(x) $ tending to zero
  as $ x $ approaches the boundary of $ \Omega_p $ or infinity.} 
  We
  refer to \cite{ernst2020first,saumard2018weighted} for an overview
  and
  bibliography on one-dimensional Stein kernels. In
  higher dimensions, given that there exist infinitely many functions
  which share a divergence, the Stein kernel is by no means uniquely
  defined. Aside from the construction in Lemma \ref{lem:de}, we also
  have {a}}
result, due to \cite{courtade2017existence},
  which
  provides a characterizing Stein
  kernel under the assumption that the distribution admits a
    Poincar\'e constant, as follows.

\begin{definition}
  \label{def:poincareconstant} {A probability distribution} $P$
  satisfies a Poincar\'e inequality if there exists a constant
  $C<\infty$ such that for every locally Lipschitz function
  $\varphi \in L^2(P)$ with expectation $\E_P \varphi = 0$, {we have}
  $ \E_P \varphi^2 \leq C \E_P \norm{\grad \varphi}^2.$ The
  smallest constant for which this inequality holds is the Poincar\'e
  constant of $P$, denoted $C_{{P}}$; it is also referred to as $P$'s
  spectral gap.
\end{definition}

\begin{example}[Stein kernels under a Poincar\'e inequality] \label{ex:courtstek}
In   \cite{courtade2017existence}
a Stein kernel is defined as any matrix $\pmb\tau$ such that
\eqref{eq:eqsteinidddddd1} holds for all $g\Colon \R^d \to \R^d $ in the
Sobolev space ${W}_p^{1,2}$.  With this definition it is shown
that if $p$ satisfies a Poincar\'e-type inequality then it possesses a
Stein kernel such that each {row} is the gradient of a vector field.
 \end{example}

 Our next main result, Theorem \ref{th:artstein}, 
 provide{s another}
  explicit construction 
  which is valid under very weak
   assumptions. We start with a formula for bivariate
  {For a bivariate density $ p $, denote by $ p_1 $ the marginal density of the first component (i.~e., in direction $ e_1 $) and by $ p_{2 \mid 1}(x_2 \mid x_1) := p(x_1, x_2)/p_1(x_1) $ the conditional density of the second component given the first one. Next, define $ \partial_1 p_{2 \mid 1} $ and $ \partial_2 p_{2 \mid 1} $ as $ \partial_1 p_{2 \mid 1}(x_2 \mid x_1) := \partial_{x_1}(x_2 \mid x_1) $ and $ \partial_2 p_{2 \mid 1}(x_2 \mid x_1) := \partial_{x_2}(x_2 \mid x_1) $.
   }

\begin{lemma}[Bivariate Stein kernels]\label{prop:transport-definition-1}
  Let $p$ be a continuous pdf on $\R^2$ {which is $\mathcal{C}^1$ on its
  support $\Omega_p$}.
  {Suppose that each $ x_1 \in \Omega_{p_1} $ has a neighbourhood $ U $ such that $ \int_{- \infty}^\infty \sup_{u \in U}
  \bigl| \partial_1 p_{2 \mid 1}(v \mid u) \bigr| \,\dl v < \infty $.}
  Moreover, suppose that $p_1$ has finite mean $\nu_1$, and let
  $\tau_1$ be the corresponding univariate kernel. Set
  $\tau_{11}(x_1, x_2) = \tau_1 (x_1)$ and
{%
\begin{equation}
\label{eq:67}
  \begin{split}
  \tau_{12}(x_1, x_2)
  &=
  - \frac{\tau_1(x_1)}{p_{2 \mid 1}(x_2 \mid x_1)}
  \int_{- \infty}^{x_2} \partial_1 p_{2 \mid 1}(v \mid x_1) \,\dl v
  \\ &=
  \frac{\tau_1(x_1)}{p_{2 \mid 1}(x_2 \mid x_1)}
  \int_{x_2}^\infty \partial_1 p_{2 \mid 1}(v \mid x_1) \,\dl v
  \, .
  \end{split}
\end{equation}
}%
  Then the vector
  $(x_1, x_2) \mapsto
  \left(
    \tau_{11}(x_1, x_2), \tau_{12}(x_1, x_2) \right)
  $ is a Stein kernel for $p$ in the direction $e_1$. {A Stein kernel
  for $p$ in the direction $e_2$ is defined similarly, by reversing
  the roles of $x_1$ and $x_2$.}
\end{lemma}

\begin{proof}
  {First, observe that both expressions in the right-hand side of
  \eqref{eq:67} agree because $
    \int_{- \infty}^\infty \partial_1 p_{2 \mid 1}(v \mid x_1) \,\dl v
    =
    \partial_{x_1} \int_{- \infty}^\infty p_{2 \mid 1}(v \mid x_1) \,\dl v
    = 0 $: the stated conditions allow us to differentiate under
    the integral sign.
  }
  The fact that
  $ \sum_{j=1}^2 {\partial_{x_j}} \bigl[ \tau_{1j}(x_1, x_2) \, p(x_1, x_2) \bigr] =
  (\nu_1 - x_1) \, p(x_1, x_2) $ for all $(x_1, x_2) \in {\Omega_p} $ follows easily
  from the definitions of $\tau_{11}$ and $\tau_{12}$ and some
  straightforward manipulations.
  {Finally, to check differentiability of $ (\tau_{11}, \tau_{12}) p $, rewrite \eqref{eq:67} as
  \[
   \tau_{12}(x_1, x_2) \, p(x_1, x_2)
   =
   \tau_1(x_1) \, p_1(x_1)
   \int_{x_2}^\infty \partial_1 p_{2 \mid 1}(v \mid x_1) \,\dl v
   \, .
  \]}%
\end{proof}

\begin{rem}
  The inspiration for formula \eqref{eq:67} is \cite[Equation
  (9)]{BaBaNa03}, where a similar quantity is introduced via a
  transport argument. To see the connection, assume that
  $\Omega_p =\Omega \times \Omega$ for an open set
  $\Omega \subseteq \R$.
  Fix $i=1$ and, for each
  $t, t', x_2 \in \Omega$ let $x_2 \mapsto T_{t, t'}(x_2)$ be the
  {map $\Omega \to \Omega$}
  transporting the conditional pdf at $x_1=t$ to that at $x_1=t'$.
  By the transformation formula, we have
  {
  \begin{equation}
  \label{eq:56}
    p_{2 \mid 1}(x_2 \mid t)
    =
    p_{2 \mid 1} \bigl(T_{t, t'}(x_2 \mid t') \bigr)
    \, \partial_{x_2} T_{t, t'}({x_2}).
  \end{equation}%
  }%
  {In particular $T_{t, t}({x_2}	) = {x_2}$ and
  $ \partial_{x_2} T_{t, t}({x_2})=1$.}
  {Assume that the function $ (t, t', x_2) \mapsto T_{t, t'}(x_2) $
  is twice continuously differentiable.}
  Taking derivatives in \eqref{eq:56} with respect to $t'$ and setting
  $t'=t=x_1$ we deduce
  that
  {
  \begin{align*}
   \lefteqn{
     \partial_2 p_{2 \mid 1}(x_2 \mid x_1)
     \, \partial_{t'} T_{t, t'}(x_2) \big|_{t'=t=x_1}
       +
     \partial_1 p_{2 \mid 1}(x_2 \mid x_1)
   } \qquad
     \\ & \kern 5em \null +
   p_{2 \mid 1}(x_2 \mid x_1)
   \, \partial_{t'} \partial_{x_2} T_{t,t'}(x_2) \big|_{t'=t=x_1}
   =
   0
   \, .
  \end{align*}
  Interchanging the differentiation in the last term and applying the
  product rule, we obtain
  \[
   \partial_{x_2} \Bigl[
     p_{2 \mid 1}(x_2 \mid x_1)
     \, \partial_{t'} T_{t,t'}(x_2) \big|_{t'=t=x_1}
   \Bigr]
     +
   \partial_1 p_{2 \mid 1}(x_2 \mid x_1)
   =
   0
   \, .
  \]
  Integrating by $ x_2 $, applying \eqref{eq:67} and multiplying by
  $ p_1(x_1) $, we find that the functions
  \[
   p(x_1, x_2) \, \partial_{t'} T_{t,t'}(x_2) \big|_{t'=t=x_1}
   \kern 1.5em \text{and} \kern 1.5em
   p(x_1, x_2) \, \frac{\tau_{12}(x_1, x_2)}{\tau_1(x_1)}
  \]
  differ only by a function of $ x_1 $.}
  {Thus, the ratio of the Stein kernel components corresponds to the
  direction of
  {the} transport from $x_1$ to $x_2$ at $t$.}  Another
  construction connected to
  optimal transport considerations is provided in \cite{1i2018stein}.
 \end{rem}

\noindent
{Next we} compute {this bivariate kernel} for several examples.
\begin{example}[Bivariate Gaussian] {For the bivariate Gaussian distribution}
  $\mathcal{N}(\nu, \Sigma)$
  direct computations of the kernel in Lemma
  \ref{prop:transport-definition-1} lead to
  $ \pmb \tau(x) = \Sigma.$
\end{example}
\begin{example}[Bivariate Student]\label{sec:exist-constr} {For the bivariate Student distribution $ t_k(\nu, \Sigma)$}
  direct computations of the kernel in Lemma  \ref{prop:transport-definition-1} {give}
  $ \pmb \tau(x) = \frac{1}{k-1} \left( (x - \nu) (x - \nu)^T + k
    \Sigma \right)$ which we shall encounter again in Example
  \ref{sec:mult-stud-t}. See also example \ref{ex:studentfirstexample}. Note that this Stein kernel can{not} be
  written as a gradient.
\end{example}

\begin{example}[Bivariate normal-gamma] \label{ex:norm-gamm}
 Direct computations of the kernel in Lemma
 \ref{prop:transport-definition-1} for  the bivariate normal-gamma
  distribution {$NG(\mu, \lambda, \alpha, \beta) $}  with pdf
\begin{equation*}
  p(x_1, x_2) = \frac{\beta^{\alpha} \sqrt
    \lambda}{ \Gamma(\alpha) \sqrt{2\pi}} x_2^{\alpha-\frac{1}{2}} e^{-x_2
    \beta - \frac{1}{2} x_2 \lambda(x_1-\mu)^2}
\end{equation*}
on $(x_1, x_2) \in \R \times \R^+$
 give
\begin{equation*}
  \pmb \tau(x_1, x_2) =
  \begin{pmatrix}
    \frac{\kappa(x_1-\mu)^2 + 2 \beta}{\kappa(2\alpha-1)} & \frac{2
      (x_1-\mu) x_2}{2\alpha - 1} \\
    \frac{x_1-\mu}{2\beta} & \frac{x_2}{\beta}
  \end{pmatrix}
\end{equation*}
where, for the second line, we simply exchange  the roles of $x_1$
  and $x_2$ in Lemma \ref{prop:transport-definition-1}.
Notably, this
   Stein kernel is not  symmetric.
\end{example}

Next, we state and prove Theorem~\ref{th:artstein}, which,
inspired by \cite{artstein2004solution}, gives a mechanism
{extending}
the bivariate construction from Lemma
\ref{prop:transport-definition-1} to arbitrary dimensions.

\begin{thm}\label{th:artstein}
Let $p \Colon \R^d \to (0,  \infty)$ be a
  continuously twice differentiable pdf  on $\R^d$ with
  \begin{equation*}
    \int \frac{\norm{ \grad p }^2}{p} {< \infty} \quad \mbox{ and } \quad \int \norm{ \grad^2(p) }
    < \infty
  \end{equation*}
{and such that $p$ has finite variance. Denote  by $\tau_i^{(1)}, i = 1, \ldots, d$ the marginal Stein
  kernels.}
Then, for any direction $e_i, i=1, \ldots, d$ there exists a Stein
kernel {$\tau_{p,i}^{(d)}(x)$} for $p$ in direction $e_i$,
\begin{equation*}
  \tau_{p,i}^{(d)}(x) =\tau_i^{(1)}(x_i)
  \begin{pmatrix}
    \tau_{i,1}^{(d)}(x \mid x_i)\, \cdots \,
    \tau_{i,i-1}^{(d)}(x \mid x_i) \quad 1 \quad \tau_{i,i+1}^{(d)}(x \mid x_i)  \, \cdots  \,
    \tau_{i,d}^{(d)}(x \mid x_i)
  \end{pmatrix}{^T}
\end{equation*}
{such that}
$\tau_{i}^{(d)}(x \mid x_i) {= (\tau_{i,1}^{(d)} (x \mid x_i), \ldots,
  \tau_{i,d}^{(d)} (x \mid x_i))^T}$ {as a function of $x$}
solves the
equation
  \begin{equation}\label{eq:60}
    \mathcal{T}_{\dv , p}(\tau_{i }^{(d)}(x \mid x_i)) = \rho_i(x_i).
  \end{equation}
Here $\rho_i(x_i) = p_i'(x_i)/p_i(x_i)$  is the score function of the
marginal {of $p$ in direction $e_i$} and $x=(x_1, \ldots, x_d)$.
{Moreover,   $ \tau_{i,i}^{(d)} (x  \mid x_i) = \tau_i^{(1)}(x_i)$.}
\end{thm}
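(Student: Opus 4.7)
The strategy is to reduce the claim to solving a divergence equation with an automatic compatibility condition, and then to construct an explicit solution by integration, generalising Lemma~\ref{prop:transport-definition-1} in the spirit of \cite{artstein2004solution}.

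First I would verify the reduction to \eqref{eq:60}. Writing the ansatz $\tau_{p,i}^{(d)}(x) = \tau_i^{(1)}(x_i)\, u(x)$ with $u \in \R^d$ satisfying $u_i = 1$, the divergence product rule \eqref{eq:42} applied with $\mathbf{F} = u$ (viewed as a row vector) and scalar $g(x) = \tau_i^{(1)}(x_i)$ yields
\[
\mathcal{T}_{\mathrm{div}, p}\big(\tau_i^{(1)} u\big) = \tau_i^{(1)}(x_i)\, \mathcal{T}_{\mathrm{div}, p}(u) + \langle u, \grad \tau_i^{(1)}(x_i)\rangle = \tau_i^{(1)}(x_i)\, \mathcal{T}_{\mathrm{div}, p}(u) + (\tau_i^{(1)})'(x_i),
\]
since $\tau_i^{(1)}$ depends only on $x_i$ and $u_i = 1$. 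The defining identity for the univariate Stein kernel, $(\tau_i^{(1)})'(x_i) + \tau_i^{(1)}(x_i)\rho_i(x_i) = \nu_i - x_i$, then shows that $\mathcal{T}_{\mathrm{div},p}(\tau_{p,i}^{(d)}) = \nu_i - x_i$ is exactly equivalent to \eqref{eq:60}, and the identity $\tau_{i,i}^{(d)}(x\mid x_i) = \tau_i^{(1)}(x_i)$ is built into the ansatz.

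Next I would construct $u$. Using the conditional decomposition $p(x) = p_i(x_i)\, q(x_{\setminus i}\mid x_i)$, the relation $\mathrm{div}(p u) = p\rho_i(x_i)$ with $u_i = 1$ becomes, after dividing by $p_i(x_i)$,
\[
\sum_{j\neq i}\partial_j (q\, u_j) = -\partial_i q(x_{\setminus i}\mid x_i),
\]
a divergence problem in the $d-1$ variables $x_{\setminus i}$ with $x_i$ as a parameter; the compatibility condition holds automatically, since $\int \partial_i q\, \mathrm{d}x_{\setminus i} = \partial_i \int q\, \mathrm{d}x_{\setminus i} = 0$. An explicit solution is obtained by iterated one-dimensional integration; in the simplest version, pick $j_0 \neq i$, set $u_j \equiv 0$ for $j \notin\{i, j_0\}$ and define
\[
q(x)\, u_{j_0}(x) = \int_{x_{j_0}}^{\infty} \partial_i q\big(x_1,\ldots, x_{j_0-1}, t, x_{j_0+1}, \ldots, x_d\big)\, \mathrm{d}t,
\]
so that $\partial_{j_0}(q\, u_{j_0}) = -\partial_i q$; for $d=2$ this reproduces \eqref{eq:67} up to the $\tau_i^{(1)}(x_i)$ prefactor. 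The freedom in the choice of $j_0$ (or of a more symmetric construction distributing the divergence among several coordinates) matches the non-uniqueness of Stein kernels in dimension $\geq 2$ discussed in Section~\ref{sec:generalities}.

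The main obstacle is to justify that the resulting $\tau_{p,i}^{(d)}\, p$ is continuously differentiable and that the defining integrals converge pointwise, as required by Definition~\ref{def:multiteikdef} and by the product-rule manipulations of the first step. This is where the hypotheses $\int \norm{\grad p}^2/p < \infty$ and $\int \norm{\grad^2 p} < \infty$ come in: combined with the finite variance, they give enough integrability and tail decay of $\partial_i q$ (via Cauchy--Schwarz and Fubini) to make the antiderivative above well-defined and $C^1$, and to legitimate the Fubini-type exchanges underpinning the reduction step. Once these regularity points are settled, substituting back into the ansatz produces the Stein kernel announced in the theorem.
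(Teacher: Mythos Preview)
Your proposal is correct and follows essentially the same strategy as the paper: first the product-rule reduction showing that the Stein-kernel condition for $\tau_i^{(1)}(x_i)\,u(x)$ is equivalent to \eqref{eq:60} for $u$ with $u_i=1$, and then the construction of such a $u$ in the spirit of \cite{artstein2004solution}. The only difference is that the paper's proof does not re-derive the existence of $u$ but simply invokes \cite[Theorem~4]{artstein2004solution} (which is exactly where the hypotheses $\int \norm{\grad p}^2/p<\infty$ and $\int\norm{\grad^2 p}<\infty$ are used), whereas you sketch an explicit antiderivative construction; the explicit formulas you allude to are in fact spelled out in the paper only in the Remark following the theorem, not in the proof itself.
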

\begin{proof}
  {Let $e_i$ be a unit vector and $p_i$ the marginal of
    $p$ in direction
    $e_i$.}  The result is almost immediate from \cite[Theorem
  4]{artstein2004solution},
  where it is proved (see middle of page 978) that, under the stated
  conditions, there exist continuously differentiable vector fields
  ${\tau_i^{(d)} (x \mid x_i)} $ {as functions of $x$} such that
   \begin{align*}
    \frac{\dv_x(\tau_i^{(d)}(x \mid x_i)(x) \, p(x))}{p(x)} = \frac{p_i'({x_i})}{p_i({x_i})}
  \end{align*}
  {and such that the component of $\tau_i^{(d)} (x  \mid  x_i)$ in direction $e_i$ is 1;  $\langle\tau_i^{(d)} (x  \mid  x_i), e_i \rangle = 1$ for all $x$.  Thus,  \eqref{eq:60} holds.}
  To see the connection with
Stein
  kernels, write
$    \tau_{ij}^{(d)}(x) = \tau_i^{(1)}(x_i) \tau_{ij}^{(d)}(x \mid x_i).  $
Then
\begin{align*}
  \sum_{j=1}^d \partial_{x_j} \Bigl( \tau_{ij}^{(d)}(x) \, p(x) \Bigr)
  & =
  \sum_{j=1}^d \partial_{x_j} \Bigl( \tau_{ij}^{(d)}(x \mid x_i) \, p(x) \,
  \tau_i^{(1)}(x_i) \Bigr)
\\ & =
  \sum_{j=1}^d \partial_{x_j} \Bigl( \tau_{ij}^{(d)}(x \mid x_i) \,
  p(x) \Bigr) \, \tau_i^{(1)}(x_i)
    \\ & \kern 3em \null +
    \sum_{j=1}^d \tau_{ij}^{(d)}(x \mid x_i) \, p(x) \,
    \partial_{x_j} \bigl( \tau_i^{(1)}(x_i) \bigr)
\\ & =
  \rho_i(x_i) \, p(x) \, \tau_i^{(1)}(x_i) +
    p(x) \, \partial_i \tau_i^{(1)}(x_i)
  \, ,
\end{align*}
where in the last line we use \eqref{eq:60} in the first sum and
$\partial_{x_j}(\tau_i^{(1)}(x_i)) = 0$ for all $j \neq i$ in the
second sum. By the definition of the univariate Stein kernel,
\begin{align*}
 \partial_i \tau_i{^{(1)}}(x_i)
 =
 - \rho_i(x_i) \tau_i{^{(1)}}(x_{i}) + \E[X_i] - x_i
 \, .
\end{align*}
{The claim follows.}
\end{proof}

\begin{rem}
  The proof of \cite[Theorem 4]{artstein2004solution} provides an explicit
  solution of \eqref{eq:60}, allowing us to generalize the bivariate
  construction from Lemma \ref{prop:transport-definition-1}
  {to the $ d $-variate case under the same conditions.}
  {
  {A} Stein kernel in direction $ e_1 $ can be {constructed} 
  in terms
  of the conditional densities
  \[
   p_{j, j+1, \ldots, d \mid 1}(x_j, x_{j+1}, \ldots, x_d \mid x_1)
   =
   \frac{p_{1, j, j+1, \ldots, d}(x_1, x_j, x_{j+1}, \ldots, x_d)}%
        {p_1(x_1)}
  \]
  and marginal cumulative distribution functions
  $ P_i(x_i) = \int_{- \infty}^{x_i} p_i(v) \,\dl v $
  as follows: firstly, set $ \tau_{1,1}^{(d)}(x_1, x_2, \ldots, x_d)
  = \tau_1(x_1) $. For $ j = 2, 3, \ldots, d - 1 $, set
  \begin{align*}
    \tau_{1,j}^{(d)}(x_1, x_2, \ldots, x_d)
   &=
    \tau_1(x_1) \,
    \frac{p_1(x_1) \, p_2(x_2) \cdots p_{j-1}(x_{j-1})}%
         {p(x_1, x_2, \ldots, x_d)}
        \\ & \kern 5em \null \times
    \int_{x_j}^\infty
    \partial_1
    p_{j, j+1, \ldots, d \mid 1}(v, x_{j+1}, x_{j+2}, \ldots, x_d \mid x_1)
    \,\dl v
      \\ & \kern 3em \null +
    \tau_1(x_1) \,
    \frac{p_1(x_1) \, p_2(x_2) \cdots p_{j-1}(x_{j-1}) \, P_j(x_j)}%
         {p(x_1, x_2, \ldots, x_d)}
        \\ & \kern 5em \null \times
    \partial_1
    p_{j+1, j+2, \ldots, d \mid 1}(x_{j+1}, x_{j+2}, \ldots, x_d \mid x_1)
    \, .
  \end{align*}
  Finally, set
  \[
    \tau_{1,d}^{(d)}(x_1, x_2, \ldots, x_d)
    =
    \tau_1(x_1) \,
    \frac{p_1(x_1) \, p_2(x_2) \cdots p_{d-1}(x_{d-1})}%
         {p(x_1, x_2, \ldots, x_d)}
    \int_{x_d}^\infty
    \partial_1
    p_{d \mid 1}(v \mid x_1) \,\dl v
  \]
  (the partial derivative $ \partial_1 $ is defined as in the
  bivariate case).
  A straightforward, though somewhat involved calculation shows
  that $ \bigl( \tau_{1,1}^{(d)}, \ldots, \tau_{1,d}^{(d)} \bigr)^T $
  is indeed a Stein kernel in direction $ e_1 $.
  Stein kernels in other directions can be {obtained}
  analogously
  by rotating the indices.}
\end{rem}

\subsection{Stein kernels for elliptical distributions}

\label{sec:stein-kern-ellipt-1}


{In this subsection we} construct {a family of} Stein kernels for
 {any member of the family of}
elliptical distributions.

\begin{definition}
{The}
multivariate elliptical
distribution $E_d(\nu, \Sigma, \phi)$ {on $\R^d$ has pdf}
\begin{equation}
  \label{eq:ellipticdistr}
  p(x) = \kappa  \, [{\rm{det}}( \Sigma) ]^{-1/2} \phi \left( \frac{1}{2}
    (x-\nu)^T\Sigma^{-1}(x-\nu) \right), \, \quad  x \in \R^d,
\end{equation}
for $\phi \Colon \R^+ \to \R^+$ a measurable function,
 $\nu \in \R^d$, $\Sigma = (\sigma_{ij})$ a symmetric positive definite
$d \times d$ matrix, and
  $\kappa$ the
normalizing constant.
\end{definition}

Note that the matrix $\Sigma$ in
 \eqref{eq:ellipticdistr}
is not necessarily the covariance matrix; also not all choices of
$\phi$ lead to well-defined densities, see \cite{landsman2008stein}
for a discussion and references.

Some prominent members of the elliptical
family are
the
 Gaussian distribution $\mathcal{N}_d({\nu},  \Sigma)$, with
 $\phi(t) = e^{-t}$;
the  power exponential distribution,
with $\phi(t) = \mathrm{exp}(- b_{p, \zeta} t^{\zeta})$ for $\zeta>0$
and $ b_{p, \zeta}$ a scale factor; the multivariate Student-$t$
distribution,
with
  $\phi(t) = \left( 1+2t/k \right)^{-(k+d)/2}$;  and
the spherical
  distributions $E_d(0, \mathrm{I}_d, \phi)$.
For simplicity, we assume that $\phi(t)>0$ for
    all $t\ge0$ so that $\Omega_p = \R^d$.
{Example 2.1 in \cite{fathi2022relaxing}}
shows that in order to find Stein
kernels for elliptical distributions, it suffices to consider {spherical distributions, so that}
$\Sigma = \mathrm{I}_d$ and $\nu=0$. {However as the notion of Stein kernel is not as broad in \cite{fathi2022relaxing} we provide a proof here.}
 \begin{prop}
 \label{prop:sigmaid}
 The application
$ \pmb \tau \mapsto \left[ x\mapsto \Sigma^{1/2} \pmb \tau(\Sigma^{-1/2} (x-\nu) ) \Sigma^{1/2} \right] $
{maps Stein kernels of $E_d(0, \mathrm{I}_d, \phi)$ to Stein kernels of  $E_d(\nu, \Sigma, \phi)$ and}
 is a bijection.

 \end{prop}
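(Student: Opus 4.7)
The plan is to verify the proposition by using the characterising identity \eqref{eq:steinkenrlnelnr}: a continuously differentiable matrix-valued function $\pmb\tau$ is a Stein kernel for $p$ precisely when $\mathcal{T}_{\mathrm{div},p}(\pmb\tau) = \nu - \mathrm{Id}$ on $\Omega_p$. The strategy is to exploit the fact that if $Y \sim E_d(0,\mathrm{I}_d,\phi)$ with pdf $q(y)=\kappa\phi(\tfrac12\|y\|^2)$, then $X := \Sigma^{1/2}Y + \nu \sim E_d(\nu,\Sigma,\phi)$ and $p(x) = \det(\Sigma)^{-1/2} q(\Sigma^{-1/2}(x-\nu))$, so the statement reduces to pulling back the Stein identity under the affine change of variables $y=\Sigma^{-1/2}(x-\nu)$.

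First I would write $\Phi : y \mapsto \pmb\tau_0(y)$ for a Stein kernel of $q$ (so $\mathrm{div}(\pmb\tau_0 q)(y) = -y\, q(y)$), set $\pmb\tau(x) := \Sigma^{1/2}\pmb\tau_0(\Sigma^{-1/2}(x-\nu))\Sigma^{1/2}$, and verify continuous differentiability of $\pmb\tau p$ on $\R^d$ (immediate from that of $\pmb\tau_0 q$ together with smoothness of the affine map). Then, using the chain rule $\partial_{x_j} = \sum_m (\Sigma^{-1/2})_{mj}\partial_{y_m}$ together with the symmetry $(\Sigma^{-1/2})^T = \Sigma^{-1/2}$, compute component-wise
\begin{align*}
(\mathrm{div}(\pmb\tau p))_i(x)
&= \det(\Sigma)^{-1/2}\sum_{j,k,l,m}(\Sigma^{1/2})_{ik}(\Sigma^{1/2})_{lj}(\Sigma^{-1/2})_{mj}\,\partial_{y_m}[(\pmb\tau_0)_{kl}q](y)\\
&= \det(\Sigma)^{-1/2}\sum_{k,l}(\Sigma^{1/2})_{ik}\,\delta_{lm}\,\partial_{y_l}[(\pmb\tau_0)_{kl}q](y)\\
&= \det(\Sigma)^{-1/2}\sum_k (\Sigma^{1/2})_{ik}(\mathrm{div}(\pmb\tau_0 q))_k(y).
\end{align*}
Substituting $(\mathrm{div}(\pmb\tau_0 q))(y) = -y\,q(y)$ gives $(\mathrm{div}(\pmb\tau p))(x) = -\Sigma^{1/2}y\, p(x) = -(x-\nu)p(x)$, i.e.\ $\mathcal{T}_{\mathrm{div},p}(\pmb\tau) = \nu - \mathrm{Id}$, which by \eqref{eq:steinkenrlnelnr} identifies $\pmb\tau$ as a Stein kernel of $E_d(\nu,\Sigma,\phi)$.

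For bijectivity, I would exhibit the inverse map $\pmb\tau \mapsto [y \mapsto \Sigma^{-1/2}\pmb\tau(\Sigma^{1/2}y + \nu)\Sigma^{-1/2}]$ and observe that a formally identical chain-rule computation (with the roles of $\Sigma^{1/2}$ and $\Sigma^{-1/2}$ interchanged, and $x$ and $y$ swapped, with $\nu$ moved to $0$) shows it sends Stein kernels of $E_d(\nu,\Sigma,\phi)$ to Stein kernels of $E_d(0,\mathrm{I}_d,\phi)$. A direct substitution confirms that the two maps are mutually inverse, yielding the bijection.

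The only mildly delicate point is bookkeeping in the chain-rule step: one must distinguish rows from columns of $\pmb\tau_0$ (since the divergence operator acts along rows, while $\Sigma^{1/2}$ is applied both on the left and on the right), and use the symmetry of $\Sigma^{1/2}$ at the correct moment to collapse the sum $\sum_j(\Sigma^{1/2})_{lj}(\Sigma^{-1/2})_{mj}$ to $\delta_{lm}$. Everything else is routine.
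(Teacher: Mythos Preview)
Your argument is correct. You work directly with the pointwise PDE characterisation \eqref{eq:steinkenrlnelnr}, computing $\mathrm{div}(\pmb\tau p)$ via the chain rule under the affine change of variables $y=\Sigma^{-1/2}(x-\nu)$; the key cancellation $\sum_j(\Sigma^{1/2})_{lj}(\Sigma^{-1/2})_{mj}=\delta_{lm}$ (using symmetry of $\Sigma^{-1/2}$) is exactly right, and the inverse map you exhibit closes the bijectivity claim. One cosmetic slip: in your second displayed line the summation index $m$ should still be present (it is eliminated by $\delta_{lm}$ only in passing to the third line).

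The paper instead uses the weak characterisation \eqref{eq:defstek2}: starting from $\E[\pmb\tau(X)\nabla f(X)]=\E[Xf(X)]$ for $X\sim E_d(0,\mathrm{I}_d,\phi)$ and $f\in C_c^\infty$, it substitutes $f(x)=g(\Sigma^{1/2}x+\nu)$, uses $\nabla f(x)=\Sigma^{1/2}\nabla g(\Sigma^{1/2}x+\nu)$, and reads off the kernel for $\Sigma^{1/2}X+\nu$ from the resulting identity. Your approach is a direct differential computation on the density side, while the paper's is an integration-by-parts/change-of-variable argument on the expectation side; they are dual to one another. Yours is arguably closer to the paper's \emph{definition} of Stein kernel (Definition~\ref{def:multiteikdef} requires the pointwise divergence identity), whereas the paper's proof leans on the equivalent test-function form. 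The paper's route avoids index bookkeeping entirely, at the cost of invoking the density/equivalence argument linking \eqref{eq:steinkenrlnelnr} and \eqref{eq:defstek2}; your route is self-contained but requires the explicit chain-rule calculation you flag as the delicate step.
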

 \begin{proof}
 {Fix  a matrix $ \mathrm A $ and a
vector $ b $ of a proper dimension, define $ \tilde{\mathrm A}(x) := \mathrm A x + b $.
By the chain rule, we have
$
 \dv(\mathbf F \circ \tilde{\mathrm A})
 =
 \bigl( \dv(\mathbf F \mathrm A^T) \bigr) \circ \tilde{\mathrm A}
$
for any suitable matrix-valued function $ \mathbf F $. Alternatively, one can write
$
 \dv \bigl( (\mathbf F \mathrm A^{- T}) \circ \tilde{\mathrm A} \bigr)
 =
 (\dv \mathbf F) \circ \tilde{\mathrm A}
$.
Letting $ \mathbf F = p \boldsymbol \tau $, where $ p $ is a scalar-valued function, we obtain
$
 \mathcal T_{\dv , p \circ \tilde{\mathrm A}}
 \bigl( (\boldsymbol \tau \mathrm A^{- T}) \circ \tilde{\mathrm A} \bigr)
 =
 (\mathcal T_{\dv , p} \boldsymbol \tau) \circ \tilde{\mathrm A}
$.
Now let $ \tilde{\mathrm A}(x) := \Sigma^{-1/2}(x - \nu) $ and let $ p $ be the density of
$ E_d(0, \mathrm I_d, \phi) $. Clearly,
$ q := [ \det(\Sigma) ]^{-1/2} (p \circ \tilde{\mathrm A}) $
is the density of $ E_d(\nu, \Sigma, \phi) $. If $ \boldsymbol \tau $ is a Stein kernel
for $ E_d(0, \mathrm I_d, \phi) $, then $ \mathcal T_{\dv , p} \boldsymbol \tau(z) = - z $.
Letting $ z = \tilde{\mathrm A}(x) $, we then have
$
 \mathcal T_{\dv , q} \bigl( (\boldsymbol \tau \Sigma^{1/2}) \circ \tilde{\mathrm A} \bigr)(x)
 =
$
$
 \mathcal T_{\dv , p \circ \tilde{\mathrm A}} \bigl( (\boldsymbol \tau \Sigma^{1/2}) \circ \tilde{\mathrm A} \bigr)(x)
 =
 (\mathcal T_{\dv , p} \boldsymbol \tau) \bigl( \tilde{\mathrm A}(x) \bigr)
 =
 \Sigma^{-1/2}(\nu - x)
$. Multiplying by $ \Sigma^{1/2} $ from the left, we conclude that
$ x \mapsto \Sigma^{1/2} \boldsymbol \tau \bigl( \Sigma^{-1/2}(x - \nu) \bigr) \Sigma^{1/2} $
is a Stein kernel for $ E_d(\nu, \Sigma, \phi) $.}
 \end{proof}

The  score function for $E_d(\nu,\Sigma, \phi)$ is
\begin{equation*}
  \rho_p(x) = \Sigma^{-1}(x-\nu)
  \frac{\phi'((x-\nu)^T\Sigma^{-1}(x-\nu)/2)}{\phi((x-\nu)^T\Sigma^{-1}(x-\nu)/2)},  \, \quad x \in \R^d.
\end{equation*}
Combining this special form with Lemma \ref{lem:de} yields
a family
of Stein kernels for members of the elliptical distributions.

  \begin{prop}\label{lem:steikellipt2prop}
    For $d\ge2$, letting $t = (x-\nu)^T\Sigma^{-1}(x-\nu)/2$, the
    matrix-valued functions
  \begin{equation}
    \label{lem:steikellipt2}
    \pmb \tau_{\delta}  (x) =
    \frac{\frac{\phi''(t)/\phi'(t)}{\phi'(t)/\phi(t)} - \delta }{(2- \delta)(d-1)}\left(  \left(
        \frac{d-1}{\delta \frac{\phi'(t)}{\phi(t)} -
          \frac{\phi''(t)}{\phi'(t)}} + 2 t \right)\Sigma  - (x-\nu) (x-\nu)^T   \right)
  \end{equation}
are  Stein kernels for  $E_d(\nu,\Sigma, \phi)$   for all $\delta \ne
2$, as long as {$\pmb \tau_{\delta} \in \ACL_{\mathrm{loc}}^1(\R^d)$}.
\end{prop}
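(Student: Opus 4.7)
By Proposition \ref{prop:sigmaid}, it suffices to verify the statement for the spherical case $E_d(0,\mathrm{I}_d,\phi)$: indeed a direct change of variables shows that the map in Proposition \ref{prop:sigmaid} sends the candidate in \eqref{lem:steikellipt2} for $(\nu,\Sigma)$ to the one for $(0,\mathrm{I}_d)$ (the argument $t = \tfrac12(x-\nu)^T\Sigma^{-1}(x-\nu)$ is invariant under the reduction). In the spherical case the score function is $\rho_p(x) = a(t)\,x$ where $t = \|x\|^2/2$, and with $a(t) := \phi'(t)/\phi(t)$, $b(t) := \phi''(t)/\phi'(t)$, the candidate takes the clean form
\begin{equation*}
\pmb\tau_\delta(x) = A(t)\,\mathrm{I}_d - B(t)\, xx^T, \qquad B(t) = \frac{b/a - \delta}{(2-\delta)(d-1)},\quad A(t) = \frac{-1}{a(2-\delta)} + 2t\,B(t).
\end{equation*}
By \eqref{eq:steinkenrlnelnr} (or equivalently by Lemma \ref{lem:de} with $\mathbf{F} = \pmb\tau_\delta$ and $\alpha+\beta=1$), it suffices to verify that $\pmb\tau_\delta\,\rho_p + \dv(\pmb\tau_\delta) = -\mathrm{Id}$.

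The verification has two pieces. First, using the ansatz, $\pmb\tau_\delta(x)\rho_p(x) = a(t)(A(t) - 2tB(t))\,x$, and the specific choice of $A$ was made precisely so that $A - 2tB = -1/(a(2-\delta))$; hence $\pmb\tau_\delta\rho_p = -\tfrac{1}{2-\delta}\,x$. Second, a direct coordinate-wise computation using $\partial_j t = x_j$ gives
\begin{equation*}
\dv(\pmb\tau_\delta)(x) = \bigl(A'(t) - 2tB'(t) - (d+1)B(t)\bigr)x.
\end{equation*}
The key algebraic identity is that $a = \phi'/\phi$ entails $a' = a(b-a)$, i.e.\ $a'/a^2 = b/a - 1$. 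Differentiating the expression for $A$ and substituting this identity causes all $b/a$ terms to cancel and yields $A' - 2tB' - (d+1)B = (\delta - 1)/(2-\delta)$.

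Adding the two contributions gives
\begin{equation*}
\pmb\tau_\delta\,\rho_p + \dv(\pmb\tau_\delta) = \left(\frac{-1}{2-\delta} + \frac{\delta - 1}{2-\delta}\right)x = -x,
\end{equation*}
which is \eqref{eq:steinkenrlnelnr} in the spherical case (with $\nu=0$). Together with the assumed continuous differentiability of $\pmb\tau_\delta$ (needed so that $\pmb\tau_\delta p$ is $\mathcal{C}^1$), this completes the proof via Proposition \ref{prop:sigmaid}. The only real obstacle is the bookkeeping in Step 2: one must keep track of the constants coming from $\sum_j\delta_{ij}x_j = x_i$ and $\sum_j\partial_j x_j = d$, and then recognise that the combination $a'/a^2 - b/a$ collapses to $-1$ — everything else is a template calculation with the radial ansatz.
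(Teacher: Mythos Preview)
Your proof is correct. The verification is clean: reducing to the spherical case via Proposition~\ref{prop:sigmaid}, rewriting the candidate as $A(t)\mathrm{I}_d - B(t)xx^T$, and then checking directly that $\pmb\tau_\delta\rho_p + \mathrm{div}(\pmb\tau_\delta) = -x$ (i.e.\ \eqref{eq:steinkenrlnelnr}) using $a'/a^2 = b/a - 1$ is a complete argument. The decomposition into $\alpha = 1/(2-\delta)$ from $\pmb\tau_\delta\rho_p$ and $\beta = (1-\delta)/(2-\delta)$ from $\mathrm{div}(\pmb\tau_\delta)$, with $\alpha+\beta=1$, is exactly Lemma~\ref{lem:de} in verification mode.

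The paper follows a genuinely different route: it \emph{derives} the formula rather than verifying it. After the same reduction to $\nu=0$, $\Sigma=\mathrm{I}_d$, the paper starts from Lemma~\ref{lem:de} with an ansatz $\mathbf{F}(x) = b(t)\bigl(xx^T + 2f(t)\mathrm{I}_d\bigr)$ containing two free functions, imposes \eqref{constr1} and \eqref{constr2} with $r=0$, and solves the resulting ODE \eqref{b-equation} for $f$ in terms of $\psi = \phi/\phi'$; substituting back and setting $\delta = 1-\beta/\alpha$ then produces \eqref{lem:steikellipt2}. Your direct verification is shorter and avoids any ODE, but the paper's constructive approach explains where the family comes from and makes the role of the parameter $\delta$ (as the ratio $\beta/\alpha$ in Lemma~\ref{lem:de}) transparent. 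Both proofs ultimately check the same identity; yours checks it, theirs discovers it.
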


\begin{proof}

  Without loss of generality we set $\nu=0$ and
  $\Sigma= \mathrm{I}_d$.  Using the temporary notation
  $\psi(t) = \phi(t)/\phi'(t)$, the score function is
  $ \rho_p(x) ={x}/{\psi(t)}$ with $t = x^T x/2$.  To use
  \eqref{constr1} with $r=0$, the equation
$$\mathbf{F}(x) x  =  -\alpha  \psi(t) x $$
is solved for example by $\mathbf{F}(x)  = - \alpha \frac{ \psi(t)}{2 t} x x^T$.
More generally a family of solutions of  \eqref{constr1}  with $r=0$
is given by
 matrix-valued functions  of the form
  \begin{align*}
\mathbf{F}(x) =  -\alpha \frac{\psi(t)}{ 2( t+f(t)) } \left(  xx^T + 2f(t)  \right)
  \end{align*}
  for some $f\Colon\R\to\R$; it is easy to check that
  $\mathbf{F}(x)\rho_p(x) =- \alpha x$.  For \eqref{constr2} with
  $r=0$ the flexibility in the choice of $f$ enters; we introduce
$b(t) = -\alpha \psi(t)/( 2( t+f(t)) )$ so that
$\mathbf{F}(x) = b(t) \left( x x^T + 2 f(t)
  \right)$.  For
  all $1 \leq i \leq d$, we have
\begin{align*}
&\dv
 \left(b(t) \left( xx^T + 2f(t) \right) \right)   =\left( 2t b'(t) +  2(b'(t) f(t)+ b(t)  f'(t))+ (d+1) b(t)  \right) x .
\end{align*}
For \eqref{constr2} with $r=0$ to hold, it suffices to choose $f$ such
that, for all $t \in \R$,
\begin{equation}\label{b-equation}  2(t+f(t)) b'(t) +  (2f'(t)+  (d+1)) b(t)
= -\beta
\end{equation}
for some $\beta$. Since $b(t) = -\alpha\psi(t)/( 2( t+f(t)) )$, simple
calculations  lead to the requirement that
\begin{align*}
t+f(t) = - \frac{(d-1)\psi(t)}{ 2 \psi'(t) - 2\beta / \alpha }
\end{align*}
at all $t$. With this in hand, we easily obtain
\begin{equation*}
  \mathbf{F}(x) = \frac{-\beta  + \alpha \psi'(t)}{d-1}\left( x x^T - 2 \left(
      \frac{(d-1)\psi(t)}{ 2 \psi'(t) - 2\beta  / \alpha} +
      t \right)\right)
\end{equation*}
Plugging in $\psi(t) = \phi(t)/\phi'(t)$ whose derivative is
$\psi'(t) = 1 - \phi(t) \phi''(t)/(\phi'(t))^2$, and dividing by
$\alpha + \beta$, then setting $\delta = 1 - \beta / \alpha$,
Proposition \ref{lem:steikellipt2prop} ensues for
$\nu=0, \Sigma = \mathrm{I}_d$; the general formula follows from Lemma
\ref{prop:sigmaid}.
\end{proof}

%

The equation \eqref{b-equation} can be reparameterized as follows.

\begin{cor}
\label{prop:stkernelell}
{Let $a, b \Colon \R^d \rightarrow \R$ be two  continuously differentiable
functions such that, for all $t \geq 0$,}
\begin{equation}
\label{eq:linkab}
\frac{(a(t) \phi(t))'}{\phi(t)}   + 2t \frac{(b(t) \phi(t))'}{\phi(t)}
+ (d+1)b(t) + 1 = 0.
\end{equation}
Then
$$\pmb \tau_{a, b}(x) = a(t) \Sigma + b(t) \,(x-\nu)(x-\nu)^T$$
with $t = \frac{1}{2}(x-\nu)^T \Sigma^{-1} (x-\nu)$  is a Stein
kernel for $E_d(\nu, \Sigma, \phi)$.
\end{cor}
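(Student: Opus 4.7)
The plan is to verify the Stein kernel condition \eqref{eq:steinkenrlnelnr} by direct computation, using the fundamental identity \eqref{scorelink} which reduces the problem to algebraic manipulation of $\mathbf{F}\rho_p + \mathrm{div}(\mathbf{F})$.

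First I would invoke Proposition \ref{prop:sigmaid} to reduce to the spherical case $\nu = 0$, $\Sigma = \mathrm{I}_d$, which simplifies notation without loss of generality: it suffices to check that for $\mathbf{F}(x) = a(t)\,\mathrm{I}_d + b(t)\,xx^T$ with $t = \tfrac12 x^Tx$, one has $\mathcal{T}_{\mathrm{div},p}(\mathbf{F}) = -\mathrm{Id}$ a.e. By \eqref{scorelink} this amounts to checking that $\mathbf{F}(x)\rho_p(x) + \mathrm{div}(\mathbf{F})(x) = -x$, where in the spherical case the score is $\rho_p(x) = (\phi'(t)/\phi(t))\,x$.

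The main computation breaks into three pieces. Using $\partial_j t = x_j$, a short calculation gives
\[
\mathbf{F}(x)\rho_p(x) = \frac{\phi'(t)}{\phi(t)}\bigl(a(t) + 2t\,b(t)\bigr)\,x.
\]
For the divergence, which the paper defines row-wise, the $i$-th component of $\mathrm{div}(a(t)\mathrm{I}_d)$ is $a'(t)x_i$, while the product rule on $f_{ij} = b(t)x_ix_j$ yields
\[
\bigl(\mathrm{div}(b(t)xx^T)\bigr)_i = 2t\,b'(t)x_i + (d+1)b(t)x_i.
\]
Adding the three contributions, the coefficient of $x$ is
\[
\frac{\phi'(t)}{\phi(t)}\bigl(a(t)+2tb(t)\bigr) + a'(t) + 2tb'(t) + (d+1)b(t),
\]
which I would rewrite, using $(a\phi)'/\phi = a' + a\phi'/\phi$ and analogously for $b\phi$, as
\[
\frac{(a(t)\phi(t))'}{\phi(t)} + 2t\frac{(b(t)\phi(t))'}{\phi(t)} + (d+1)b(t).
\]
The hypothesis \eqref{eq:linkab} says precisely that this expression equals $-1$, so the total sum equals $-x$, which is the required identity. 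Transferring back to general $\nu$ and $\Sigma$ via Proposition \ref{prop:sigmaid} completes the proof; continuity of $\pmb\tau_{a,b}\,p$ is inherited from the assumed $\mathcal{C}^1$ regularity of $a$ and $b$.

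The only mild obstacle is keeping the bookkeeping of the divergence straight: the paper's convention is that $\mathrm{div}$ acts on the rows of a matrix, so the correct expression to evaluate is $\sum_j \partial_j f_{ij}$ for each $i$, and one must apply the chain rule through the scalar $t = \tfrac12 x^Tx$ and identify the scalar factor $2t b'(t) + (d+1)b(t)$ cleanly. Once this is in place, recognizing the left-hand side of \eqref{eq:linkab} is immediate, and the result falls out as an algebraic consequence of the ODE.
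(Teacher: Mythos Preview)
Your proof is correct. The computation is clean: the reduction to the spherical case via Proposition~\ref{prop:sigmaid} is legitimate (and the transfer back does produce exactly $a(t)\Sigma + b(t)(x-\nu)(x-\nu)^T$), and the row-wise divergence calculation is accurate, including the identification of the $(d+1)b(t)$ term.

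Your route differs from the paper's. The paper does not verify \eqref{eq:steinkenrlnelnr} directly; instead it treats the corollary as a reparameterization of Proposition~\ref{lem:steikellipt2prop}, going back to the intermediate equation \eqref{b-equation} from that proof, fixing $\beta=-1$, and imposing a relation on $a$ in terms of the auxiliary function $f$ used there. That derivation is terse and leans on the ansatz and bookkeeping already set up in the proof of Proposition~\ref{lem:steikellipt2prop}. Your argument is more elementary and self-contained: it bypasses Lemma~\ref{lem:de} and Proposition~\ref{lem:steikellipt2prop} entirely and simply checks $\mathbf{F}\rho_p+\mathrm{div}(\mathbf{F})=-x$ using \eqref{scorelink}. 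The cost is a short explicit divergence computation; the gain is that the corollary stands on its own, and the role of condition~\eqref{eq:linkab} becomes transparent as exactly the scalar identity needed to make the coefficient of $x$ equal to $-1$. One small wording point: the Stein kernel definition asks that $\pmb\tau_{a,b}\,p$ be continuously \emph{differentiable}, not merely continuous, so you should say that rather than ``continuity''; this is indeed guaranteed by the $\mathcal{C}^1$ assumption on $a,b$ together with the smoothness of $\phi$.
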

\begin{proof}
In  \eqref{b-equation} take $\beta = -1$ and require that
$a(t)$ satisfies
$$ ( a \phi)'(t) = 2 \phi(t) (b f)'(t) + \frac{t \phi(t)}{t + f(t)}.$$
Then the assertion follows from Proposition \ref{lem:steikellipt2prop}.
\end{proof}

{{Many} options {in \eqref{eq:linkab}} are possible. For instance,}
setting $b \equiv 0$ {gives}
$a(t) =\frac{1}{\phi(t)} \int_t^{+\infty} \phi(u) \,\dl u,$
and the
  matrix-valued function
\begin{align}\label{lemm:lemmasteik}
\pmb \tau (x)  = \left( \frac{1}{\phi((x-\nu)^T\Sigma^{-1}(x-\nu)/2)}
\int_{(x-\nu)^T\Sigma^{-1}(x-\nu)/2}^{+\infty} \phi(u) \,\dl u \right) \Sigma
\end{align}
is a Stein kernel for $E_d(\nu, \Sigma, \phi)$ (provided it is
continuously differentiable).
 This recovers
\cite[Theorem 2]{landsman2008stein}.  Setting $a \equiv 0$ leads to:
\begin{align*}
{{\pmb \tau}(x)} = \left( \frac{t^{-\frac{d+1}{2}}}{2\phi(t)} \int_t^{+\infty} u^{\frac{d-1}{2}} \phi(u) \,\dl u \right)(x-\nu)(x-\nu)^T,
\end{align*}
is a Stein kernel for $E_d(\nu, \Sigma, \phi)$; we have so far not
found any use for this formula.

\begin{example}[Stein kernels for the multivariate Gaussian
  distribution]
   \label{sec:stein-oper-Gauss} {As} the multivariate Gaussian has  $\phi(t) = e^{-t}$ {and} $\phi'(t)/\phi(t) = -1$, we {recover that}
   $$ \rho_{\gamma}(x) = - \Sigma^{-1} (x-\nu)$$ is the score
   function of $\gamma$. Since
   $\frac{1}{\phi(t)} \int_t^{\infty} \phi(u) \,\dl u = 1$ for all
   $t$,  \eqref{lemm:lemmasteik} shows that
   $ \pmb \tau_{1} = \Sigma$ is a Stein kernel for $\gamma$. 
Moreover, \eqref{lem:steikellipt2}
 yields, after some
simplifications, the following family $  \pmb \tau_{\delta}(x)$
{given for $\delta \ne 2$ by
    \begin{equation}\label{eq:stkernguau}
      \pmb \tau_{\delta}(x) =  \frac{1}{(2 - \delta)(d-1)} \left(\left(
      d-1 + 2t (1 - \delta)  \right) \Sigma - (1 - \delta) (x - \nu)(x-\nu)^T
      \right)
\end{equation}
are all Stein kernels for $\gamma$.  In particular, the choice
$\delta =1$ recovers $\pmb \tau_{0}(x) = \Sigma,$ although many other
choices are possible.}  {First explorations indicate that in this
example the freedom of choice in $\pmb \tau_{\delta}$ does not
provide improvement over the most natural choice
$\pmb \tau(x) = \Sigma$,
      for comparison of
      normal distributions in Wasserstein distance,
      see Example~\ref{ex:comaoighe}. }
\end{example}

\begin{example}[Stein kernels for the multivariate Student $t$-distribution]
\label{sec:mult-stud-t}

This distribution is an elliptical distribution with
$\phi(t) = (1+2t/k)^{-(k+d)/2}$ and hence
$\phi'(t)/\phi(t) = -(d+k)/(k+2t)$.
 {F}rom $k > 1$ it follows that $d+k>2$ and
  \begin{equation*}
      \frac{1}{\phi(t)} \int_t^{+\infty}  \phi(u) \,\dl u = \frac{k+2t}{d+k-2}.
  \end{equation*}
  Hence  \eqref{lemm:lemmasteik} gives that
  \begin{align}
    \label{eq:steikstud1}
  \pmb \tau_1(x) =  \frac{(x-\nu)^T\Sigma^{-1}(x-\nu)+k}{d+k-2} \Sigma
\end{align}
is a Stein kernel for the multivariate Student distribution for
$k > {2}$; here $k>2$ guarantees that $t_k$ has finite variance.
Also, we note that $ \pmb \tau_1 \in \mathcal{F} (t_k)$.  Similarly,
using that $\phi''(t)/\phi'(t) = -(d+k+2)/(k+2t)$,
\eqref{lem:steikellipt2} gives a family of Stein kernels which
are indexed by $\delta \ne 2$:
\begin{equation}
\label{eq:steintsubet}
\begin{split}
 \pmb \tau_{\delta} (x)
 &=
 \frac{1}{(2 - \delta )(d+k)(d-1)} \Bigl[ \left\{
     (d-1)(k+2t) + 2t ( (1 - \delta) (d+k) + 2 )
 \right\} \Sigma
 \\ & \kern 3em \null
 - \left\{ (1 - \delta) (d+k) + 2\right\} (x - \nu) (x - \nu)^T
 \Bigr]
 \, .
\end{split}
\end{equation}
 We note that $ \pmb \tau_{\delta} \in \mathcal{F}(t_k)$ {for
   $k>2$.} The particular choice of
 $\delta$ such that
 $d-1+( 1- \delta) (d+k) + 2 = 0$, i.e.\
 $\delta  =1  - (d+1)/(d+k) $, eliminates $t$ from  \eqref{eq:steintsubet} and,
 after simplifications, we obtain
 for $k > 2$
 \begin{equation}
  \label{eq:steikstu2}
  \pmb \tau_2(x) = \frac{1}{k-1} \left(  (x - \nu) (x - \nu)^T  + k
    \Sigma \right).
\end{equation} This agrees with the kernel already identified in
Example \ref{sec:exist-constr}.
When $d=1$, then
both $\pmb \tau_1$ and
$\pmb \tau_2$ simplify to $\tau(x)= (x^2+k\sigma^2)/(k-1)$,
the univariate kernel for the Student-$t$ distribution with $k$
degrees of freedom and centrality parameter $\sigma^2$, see e.g.\
\cite[page 30]{LRS16}. We will see in Example \ref{ex:gauvsstudent}
that, when comparing with a Gaussian pdf, neither $\pmb \tau_1$
nor $\pmb \tau_2$ are ``better'' choices; in fact optimizing over
$\delta$ in \eqref{eq:steintsubet} provides possibilities for strict
improvement.

\end{example}

\subsection{Stein kernel discrepancies}
\label{sec:stein-kern-discr}

Stein kernels  provide a natural means for comparing
  distributions  through the discrepancy
$$S(p_2\mid p_1) =  \inf_{\pmb \tau_1, \pmb \tau_2} \E_{p_2}[\norm{
  \pmb \tau_2 - \pmb \tau_1}^2_{\mathrm{HS}}]^{1/2}$$ where the
infimum is taken over all Stein kernels for $p_1$ and for $p_2$.  In,
fact, the specific case $p_1 = \gamma$ the standard normal
distribution has been studied in detail e.g.\ in
\cite{nourdin2013integration,ledoux2015stein,courtade2017existence}
where connections with various classical probability metrics
 as well as information-type
discrepancies are identified. In particular  it is shown in
\cite{ledoux2015stein} that
$$\mathcal{W}_2(\gamma, p_2) \leq S(p_2\mid \gamma)$$ (here
$\mathcal{W}_2$ denotes the classical 2-Wasserstein distance). We
shall
see in Example~\ref{rem:steinker} in the next section
that, outside a Gaussian context, $S(p_2\mid p_1)$ also bounds
1-Wasserstein distance, under some additional assumptions on
$p_1$.

\begin{example}[Comparison between Gaussians]
  \label{ex:comaoighe}
{For $i=1,2$} let $p_i$ be a centred Gaussian pdf
with covariance
  $\Sigma_i$. Then
  \begin{equation}\label{eq:boundGauss}
    S(p_2 \mid p_1)^2 \leq  \norm{ \Sigma_2 - \Sigma_1}_{\mathrm{HS}}.
  \end{equation}
 If the $\Sigma_i$ are of the form $\Sigma_i = \begin{pmatrix}
    1 & \rho_i  \\ \rho_i &  1
  \end{pmatrix}$ for some $\rho_i \in [0, 1]$, $i=1, 2$, then
 $$\norm{ \Sigma_2 -
   \Sigma_1}_{\mathrm{HS}} = \sqrt{2}|\rho_1-\rho_2|$$ which is
 exactly the value of the 2-Wasserstein distance in this case, see
 \cite[Theorem~2.4]{T11}.  {A legitimate question in this context is
   whether there is some optimization to be reaped from the freedom of
   choice in the Gaussian Stein kernels from \eqref{eq:stkernguau},
   and considering
   $\E_{p_j}\norm{\pmb\tau_{1, \delta} - \pmb\tau_{2,
      \delta}}_{\mathrm{HS}}$ with $j = 1, 2$ and optimizing
   over $\delta$.
   Explicit numeric computations with the various kernels  indicate
   that the answer is negative; they show that
   the smallest bound is attained at $\delta = 1$ and
   $\pmb \tau_{i, 1} = \Sigma_i$, $i=1, 2$. The covariance matrix is
   therefore, in this sense, the ``best'' Stein kernel for the
   Gaussian pdf.
   In the
   next example we will exhibit a situation where optimization is in
   fact possible. }
  \end{example}

{
\begin{example}[Student vs Gaussian] \label{ex:gauvsstudent}
  Let $p_1 = \gamma$ be the standard Gaussian pdf for which we fix
  $\pmb \tau_1 = \mathrm{I}_d$,  and $p_2{=t_k}$ the
  centred Student pdf with $k$ degrees of freedom and shape
  $\Sigma = \mathrm{I}_d$. {The} Stein kernels for this distribution {from}
  Example~\ref{sec:mult-stud-t}
  {provide a}
  variety of possible differences $   \pmb  \tau_1 - \pmb \tau_2
  $. For instance we get
  \begin{equation*}
\pmb     \tau_1(x) - \pmb \tau_2(x) =  \left( 1-
  \frac{x^Tx+k}{d+k-2}\right)\mathrm{I}_d \mbox{ and } \pmb\tau_1(x) -\pmb \tau_2(x) =   - \frac{1}{k-1} \left( x x^T  +
      \mathrm{I}_d \right).
\end{equation*}
where the  first is obtained by
  taking  $\pmb  \tau_2$  as given in
  \eqref{eq:steikstud1}   and the second by taking $\pmb \tau_2$ as given in
\eqref{eq:steikstu2}.
These expressions  lead to
\begin{equation*}
  S_1(\gamma\mid t_k) = \frac{\sqrt{2d(d+2)}}{d+k-2} \mbox{ and }   S_2(\gamma \mid t_k) = \frac{\sqrt{d(5+d)}}{k-1},
\end{equation*}
respectively. When $d = 1$ we get
$S_1(\gamma \mid t_k) = S_2(\gamma \mid t_k) = \sqrt{6}/(k-1)$
which concurs with \cite[Section 6.3]{LRS16}. In dimension $d\ge2$,
both bounds tell a similar story, although for fixed $d$ the bound
$S_2(\gamma \mid t_k)$ is slightly smaller
for large $k$ than $S_1(\gamma \mid t_k)$. However dependence of
$S_1(\gamma \mid t_k)$ on the dimension is more informative, as
this last bound does not explode as $d$ goes to infinity. For the sake
of illustration, in the case $d=2$, we also computed the discrepancy
provided by comparing $\pmb \tau_1 = \mathrm{I}_d$ with the kernel
  $\pmb \tau_{\delta}$ given in \eqref{eq:steintsubet}. Then
  one can see that an optimal choice of parameters is
  $\delta = 1 -4 (2k-3)/(3k^2+4k-4)$ leading to a third discrepancy
  given by
\begin{equation*}
  S_3(\gamma \mid t_k) = \sqrt{\frac{40}{8 + k(3k-4)}}
\end{equation*}
which improves $S_1(\gamma \mid t_k)$ and  $S_2(\gamma \mid t_k)$
 for all $k>1$. Similar manipulations
will be possible in higher dimensions; this may be of independent
interest. Finally we remark that we have also computed
$ S_j(t_k \mid \gamma)$, $j = 1, 2, 3$ by exchanging the roles of
target and approximating distribution; however this results in bounds
which are hard to read and which we do not reproduce here.
\end{example}
}

{Thanks to the results in Sections \ref{sec:generalities} and
  \ref{sec:stein-kern-ellipt-1}, manipulation of Stein kernels
  can be surprisingly easy even for densities which {are not
    elliptical} and do not satisfy the usual regularity assumptions
  (such as being log-concave, stationary distributions of diffusions,
  having a spectral gap, etc.). This final example serves as an
  illustration.}

\begin{example}[Normal-gamma prior] \label{ex:normalgamma}
  {Now
  consider a Bayesian model where the prior distribution of $\theta$ is
  bivariate normal-gamma
  $\mathrm{NG}(\mu_0, \lambda_0, \alpha_0, \beta_0)$
  (see Example \ref{ex:norm-gamm}).
  Let ${{\xi}} =1/\sigma^2$ denote the precision and
  $\pmb\theta = (\nu, {\xi}) \in \R \times \R^+$ be the parameter of
  interest.  For $\theta_2 $ following the posterior distribution given
  $X_1 = x_1, X_2 = x_2, \dots ,X_n = x_n$ (independently sampled from a
  univariate Gaussian
  {$\mathcal{N}(\nu, \sigma^2)$} distribution)  it is known that}
  the {resulting}  posterior distribution {$P_2$} of $\pmb \theta_2$ is
  \begin{equation*}
\pmb \theta_2 \sim \mathrm{NG}\left(\frac{\lambda_0  \mu_0 + n \bar{x}}{n + \lambda_0}, n +
\lambda_0, \alpha_0 + \frac{n}{2}, \beta_0 + \frac{n}{2}s^2 + \frac{1}{2}\frac{n
  \lambda_0}{n+\lambda_0}(\bar  x - \mu_0)^2\right)
  \end{equation*}
  {where $\bar{x}= \frac1n \sum_{i=1}^n x_i$ denotes the sample mean and $s^2 = \frac1n \sum_{i=1}^n (x_i - \bar {x})^2$.}
  Similarly if $\pmb \theta$ has (improper) prior the uniform
  distribution then it is easy to see that the corresponding posterior
  distribution $\pmb \theta_1$ is
  \begin{equation*}
    \pmb \theta_1 \sim \mathrm{NG}\left(\bar x, n, \frac{n+1}{2},
    \frac{n}{2}s^2\right).
\end{equation*}
{Example \ref{ex:norm-gamm} gives a family of Stein kernels, each of which results in a}
bound on the Stein
discrepancy {which we do not detail here but but illustrate its
  behavior numerically for certain choices of parameter {in}
  Figure~\ref{fig:ploppp}.}  {The Stein discrepancy to the normal
  decreases superlinearly with increasing $n$, and with increasing
  distance $| \bar x - \mu_0|$.}

  \begin{figure}
    \centering
    \captionsetup{width=0.88\textwidth}
\includegraphics[width=0.70\textwidth]{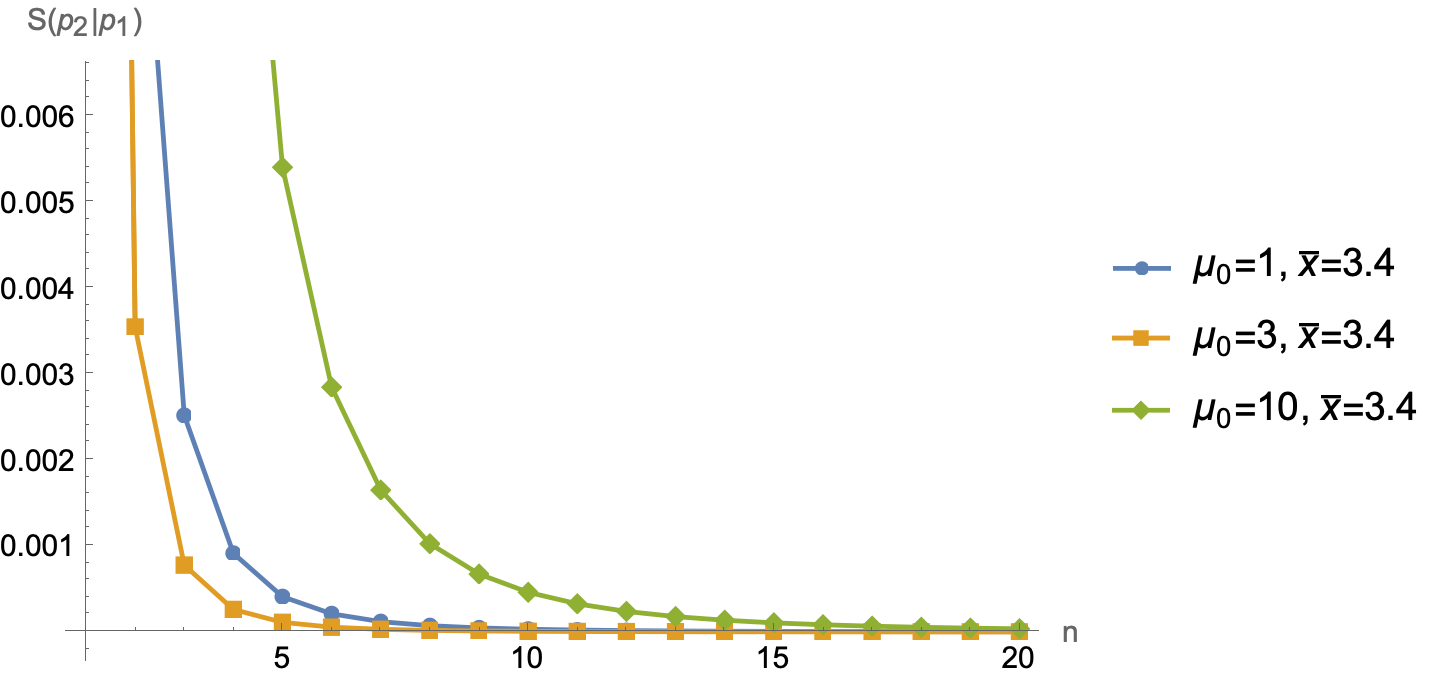}
\caption[test]{\small {The Stein
discrepancy  $S(p_2 \mid p_1)$  comparing the effect of  normal-gamma $\mathrm{NG}(\mu_0, \lambda_0, \alpha_0, \beta_0)$ prior vs uniform prior on the posterior in a normal $\mathcal{N}(\nu, \sigma^2)$ model} for $\mu_0 = 1$ {(blue circles)}, $ 3 $ {(orange squares)}, $ 10$ {(green diamonds)} and {the fixed arbitrary choice}
  $\lambda_0 = 1, \alpha_0 = 2, \beta_0 = 3$, and $s^2 = 1.2$. The Stein discrepancy to the normal
  decreases superlinearly with increasing $n$, and with increasing
  distance $| \bar x - \mu_0|$. 
  } \label{fig:ploppp}
  \end{figure}
\end{example}

\section{Comparing 
distributions in Wasserstein distance}
\label{sec:stein-discrepancies}

The main motivation behind Stein's method is the quantitative
comparison of probability distributions.  Consider {two probability
  measures $P_i$ on the same probability space, with Stein operator
  $\mathcal{A}_i$ and Stein class $\mathcal{F}(\mathcal{A}_i)$, for
  $i=1, 2$}.
Given some class $\mathcal{G}$ of
suitable test functions, Stein's method  {uses}
{$\sup_{g \in \mathcal{G}} \min \left\{ |\E_{P_2} \mathcal{A}_1
  g|, |\E_{P_1} \mathcal{A}_2 g| \right\}$} as a measure of
difference between $P_1$ and $P_2$.
Given this premise, there are a
variety of possible routes, including couplings, exchangeable pairs,
and comparison of operators.  {Here we explore the latter approach; we
focus on $\sup_{g \in \mathcal{G}}  |\E_{P_2}
\mathcal{A}_1|$, the other bound following by exchanging the roles of
$P_1$ and $P_2$.}

\subsection{
{C}omparing Stein operators {with nested support}}
\label{subsec:gencomp}

Recalling that
$ \E_{p_2} \mathcal{A}_2 g = 0$ for all
{$g \in \mathcal{F}(\mathcal{A}_2)$,}
 for any
$\mathcal{G} \subseteq \mathcal{F}(\mathcal{A}_2)$ it holds that
\begin{align*}
  \sup_{g \in \mathcal{G}}
  \bigl| \E_{P_2} \mathcal{A}_1 g \bigr|
  =
  \sup_{g \in \mathcal{G}}
  \bigl| \E_{P_2} ( \mathcal{A}_1 - \mathcal{A}_2) g \bigr|
\end{align*}
so that the difference $\mathcal{A}_1 - \mathcal{A}_2$ controls the
Stein discrepancy and, consequently, any metric controlled by the
latter.  There is much freedom in the choice of operators and
  classes for this purpose.
For transparency of exposition we
  {focus here on using operators \eqref{eq:4} to control  Wasserstein distance
  $\mathcal{W}_1(P_1, P_2)$, recalling \eqref{eq:diststeindis}}.
The main general result of the section {then} follows from previous
developments.

\begin{theorem}\label{theo:comap}
  Let $P_1$ and $P_2$ {be two probability measures} on $\R^d$ with
  respective pdfs $p_1$ and $p_2$ {having nested support $\Omega_{p_2} \subseteq \Omega_{p_1}$.} {Assume that $ \E_{p_1} | h| < \infty$ for every $h  \in \Lip({\Omega_{p_1}}, 1)$.}
  Associate to $p_i$, $i = 1, 2$
  the second order operator \eqref{eq:4}
  \begin{align*}
    \A_i g
    =
    \left\langle
      \mathcal{T}_{\dv , p_i}\mathbf{F}_i, \grad g
    \right\rangle + \left\langle
      \mathbf{F}_i, \grad^2 g \right\rangle_{\mathrm{HS}} ;
    \quad i = 1, 2
  \end{align*}
  for some
  matrix-valued functions
  $\mathbf{F}_i {\null \in \mathcal F_\loc(p_i)} $, $ i = 1, 2$;
  let $\mathcal{F}(\mathcal{A}_i)$, $i = 1, 2$ be the corresponding
  Stein classes. {Consider a collection
  $ \mathcal G_2(\mathbf F_1, \mathbf F_2) \subseteq
  \mathcal F(\mathcal A_1) $ 
  such that}
  \begin{enumerate}[(i)]
  \item {$ \E_{p_2} |\mathcal A_i g| < \infty $ for $ i = 1, 2 $,}
  \item {$ \int_{\Omega_{p_2}} \bigl|
    \dv \bigl( {\mathbf{F}_2^T} \grad g\, p_2 \bigr)
  \bigr| < \infty $}
  \end{enumerate}
  {for all $ g \in \mathcal G_2(\mathbf F_1, \mathbf F_2) $.
  Suppose}
  that the matrix-valued functions $\mathbf{F}_1$
  and $\mathbf{F}_2$ are such that,
  for every $h  \in \Lip({\Omega_{p_1}}, 1)$,
  we can find a solution
  $ g \in {\mathcal G_2(\mathbf F_1, \mathbf F_2)} $
  of the \emph{Stein equation}
  \begin{equation}
  \label{eq:strongsteinequ}
    \left\langle
      \mathcal{T}_{\dv, p_1}\mathbf{F}_1, \grad g
    \right\rangle
      +
    \left\langle
      \mathbf{F}_1, \grad^2 g \right\rangle_{\mathrm{HS}}
    =
    h - \E_{p_1} h
    \, .
  \end{equation}
  Then
  \begin{align*}
    \mathcal{W}_1(P_1, P_2)
    &\leq
    \sup_{g \in \mathcal{G}_2(\mathbf{F}_1,\mathbf{F}_2)}
    \Bigl| \E_{p_2} \bigl[
      \left\langle
        \mathcal{T}_{\dv, p_1} \mathbf{F}_1
          -
        \mathcal{T}_{\dv, p_2} \mathbf{F}_2, \grad g
      \right\rangle
        +
      \left\langle
        \mathbf{F}_1 - \mathbf{F}_2, \grad^2 g
      \right\rangle_{\mathrm{HS}}
    \bigr] \Bigr|
      \\ & \kern 4em \null +
    \kappa_2(\mathbf{F}_1, \mathbf{F}_2)
  \end{align*}
  where
  $ \kappa_2(\mathbf{F}_1, \mathbf{F}_2) =
  \sup_{g \in  \mathcal{G}_2(\mathbf{F}_1,\mathbf{F}_2)}
  {\left|  \int_{\Omega_{p_2}} \dv \bigl(
    \mathbf{F}_2^T \grad g \, p_2
  \bigr) \right|} $.
\end{theorem}

\begin{proof}
  We use \eqref{eq:9} to calculate
  \begin{align*}
    \E_{p_2} \Bigl[
     \left\langle
       \mathcal{T}_{\dv ,  p_2} \mathbf{F}_2, \grad g
     \right\rangle + \left\langle
       \mathbf{F}_2, \grad^2 g
     \right\rangle_{\mathrm{HS}}
    \Bigr]
    &=
    \E_{p_2} \bigl[
      \mathcal{T}_{\dv, p_2} \bigl( \mathbf{F}_2^T \grad g \bigr)
    \bigr]
    =
    \int_{\Omega_{p_2}} \dv \bigl( \mathbf{F}_2^T \grad g \, p_2 \bigr)
    \, .
  \end{align*}
  {If $ h \in \Lip(\Omega, 1) $ and
  $ g \in \mathcal{G}_2(\mathbf{F}_1, \mathbf{F}_2) $
  is a solution of \eqref{eq:strongsteinequ}, then}
  the assumptions of the theorem guarantee that
  {the  $\E_{p_2} h$ is well defined}
  {and}
  \begin{align*}
    {\E_{p_2} h - \E_{p_1} h}
    &=
    \E_{p_2} \Bigl[
      \left\langle
        \mathcal{T}_{\dv, p_1} \mathbf{F}_1
          -
        \mathcal{T}_{\dv, p_2} \mathbf{F}_2,
        \grad g
      \right\rangle
    \Bigr]
      +
    \E_{p_2} \bigl[ \left\langle
      \mathbf{F}_1 - \mathbf{F}_2, \grad^2 g
    \right\rangle_{\mathrm{HS}} \bigr]
      \\ & \qquad \qquad \null +
    \int_{\Omega_{p_2}} \dv \bigl( \mathbf{F}_2^T \grad g \, p_2 \bigr)
    \, .
  \end{align*}
  Taking suprema leads to the claims.
\end{proof}

\begin{rem}
  \leavevmode
  {
  \begin{itemize}
    \item Taking $g$ as in \eqref{eq:strongsteinequ}, it is not
    guaranteed that $g \in \mathcal{F} ( \A_2)$, so that
    $\E_{p_2} \mathcal{A}_2 g$ need not be zero.  It is assured that
    $g$ is in $\dom(\A_2)$, the set of functions for which
    $\A_2$ is defined.
    \item {As we shall see, in} many cases of interest, it will {be easy to verify that}
    the assumptions of Theorem \ref{theo:comap} are satisfied and,
    moreover, the solutions $g$ belong to $\mathcal{F}(\mathcal{A}_2)$,
    i.e.\ $\mathcal{G}_2(\mathbf{F}_1, \mathbf{F}_2) \subseteq
    \mathcal{F}(\mathcal{A}_2)$.
    Then $\kappa_2(\mathbf{F}_1, \mathbf{F}_2)=0$.
  \end{itemize}
}
\end{rem}

There is considerable
 flexibility in the bounds that can be obtained
  from Theorem~\ref{theo:comap};
   two
  particular cases   are illustrated in the next {e}xamples.

\begin{example}[{Wasserstein distance and Fisher information distance}]
\label{rem:steinfish}
  Suppose that the
  assumptions {of Theorem \ref{theo:comap}} are satisfied for some
  $\mathbf{F}_1 = \mathbf{F}_2 $ then
  \begin{align}
    \mathcal{W}_1(P_1, P_2)
    &\leq
    \sup_{g \in \mathcal{G}_2(\mathbf{F}_1, \mathbf{F}_1)}
    \Bigl| \E_{p_2} \Bigl[ \bigl\langle
      \left(
        \mathcal{T}_{\dv, p_1} - \mathcal{T}_{\dv, p_2}
      \right) \mathbf{F}_1, \grad g
    \bigr\rangle \Bigr] \Bigr|
      +
    \kappa_2(\mathbf{F}_1, \mathbf{F}_1)
    \, .
    \label{eq:difffff12}
  \end{align}
  Suppose 
  that $\mathbf{F}_1 = \mathrm{I}_d$ is allowed in
  \eqref{eq:difffff12}, then
  \begin{equation} \label{wassfish}  \mathcal{W}_1(P_1, P_2) \leq \sup_{g \in
    \mathcal{G}_2(\mathrm{I}_d, \mathrm{I}_d)}  {\bigl( \E_{p_2} \| \grad g \|^2 \bigr)^{1/2}} I(p_2 \mid p_1)
    + \kappa_2(\mathrm{I}_d, \mathrm{I}_d) \end{equation}  where
  $ I(p_2 \mid p_1) = (\E_{p_2} [ \norm{\grad \log p_1
      - \grad \log p_2}^2 ])^{1/2} $ is a \emph{Fisher
    information distance} which is well-known to metrize convergence
  in distribution, see e.g.\ \cite{Jo04}.  Thus bounds on
  the Fisher information (which scales well over convolutions, see
  e.g.\ in \cite{BaBaNa03,Jo04,nourdin2013integration}) translate
  immediately into bounds on Wasserstein distance.
  {We will study  inequality
  \eqref{eq:difffff12}  in more detail in {Section
  \ref{subsec:gencomp}}.}
\end{example}

\begin{example}[{Wasserstein distance  and Stein kernel discrepancy}]
\label{rem:steinker}
Suppose that the
assumptions {of Theorem \ref{theo:comap}} are satisfied for some $\mathbf{F}_1, \mathbf{F}_2$ such
that
$\mathcal{T}_{\dv , p_1}\mathbf{F}_1=
\mathcal{T}_{\dv , p_2}\mathbf{F}_2$ then
\begin{align}
 \mathcal{W}_1(P_1, P_2)  &  \leq   \sup_{g \in
    \mathcal{G}_2(\mathbf{F}_1,\mathbf{F}_2)} \left| \E_{p_2} \left[\left\langle
                       \mathbf{F}_1-\mathbf{F}_2,
                       \grad^2g\right\rangle_{\mathrm{HS}}
                              \right]\right| + \kappa_2(\mathbf{F}_1,
                              \mathbf{F}_2).
  \label{eq:difffff2}
  \end{align}
  If 
  $p_1$ and $p_2$ share a common mean
  and {if}  the Stein kernels $\mathbf{F}_1 = \pmb \tau_1$ and
  $\mathbf{F}_2 = \pmb \tau_2$ are allowed in
  \eqref{eq:difffff2}, then
  \begin{align}
 \mathcal{W}_1(P_1, P_2)  &  \leq   \sup_{g \in
    \mathcal{G}_2(\mathbf{F}_1,\mathbf{F}_2)} \left| \E_{p_2} \left[\left\langle
                      \pmb \tau_1-\pmb \tau_2,
                       \grad^2g\right\rangle_{\mathrm{HS}}
                              \right]\right| + \kappa_2(\mathbf{F}_1,
                              \mathbf{F}_2)  \label{eq:aliuehg}
  \end{align}
  which provides a {direct} connection between Wasserstein
  distance, Stein's method and the \emph{Stein kernel discrepancies}
  studied {in Section \ref{sec:stein-kern-discr}}.
 \end{example}

 Applicability of Theorem \ref{theo:comap} (or inequalities
 \eqref{eq:difffff12} and \eqref{eq:difffff2}) rests on a good
 understanding of the properties of solutions
 $g \in \mathcal{G}_2(\mathbf{F}_1, \mathbf{F}_2)$ of Stein equations,
 and more particularly on $\grad g$ and $\grad^2g$. Bounds on these
 quantities are called \emph{Stein
   factors}.
 For ease of use {here} we collect
  relevant estimates  {from the
   literature};
a new result, under the assumption of  Poincar\'e
 constant, will be provided in {Section
 \ref{sec:stein-factors-under}.}
\begin{example}[Gaussian Stein
  factors] \label{ex:steinfac} \label{lem:bouddd} Let $\Sigma$ be a
  $d \times d$ positive-definite matrix.   The Stein equation resulting from  \eqref{eq:stlemma}  for $\mathcal{N}(0, \Sigma)$
   is
\begin{align}\label{eq:stronggausseq}
\left\langle \Sigma,  \grad^2 g(x)  \right\rangle _{\mathrm{HS}}- \left\langle x, \grad g(x) \right\rangle = h(x) -
  \E [h(\Sigma^{1/2}Z) ]; \quad \quad  x \in \R^d,
\end{align}
with
$Z$ a standard normal random vector.  Letting
$Z_{x, t} = e^{-t} x + \sqrt{1-e^{-2t}}\Sigma^{1/2}Z$, a solution of
\eqref{eq:stronggausseq} is identified in \cite{MR1035659} as
$ g(x) = - \int_0^\infty \E[\bar{h}(Z_{x, t})] \,\dl t$
where $\bar{h} =h - E[\Sigma^{1/2}Z]$, see also
\cite{G91}.  
In
\cite{GoRi96, MR2573554} it is shown that if $h$ is $n$ times
differentiable then $g$ is also $n$ times differentiable, and
\begin{align*}
  \left| \frac{\partial^k g(x)}{ \prod_{j=1}^k \partial x_{i_j}} \right| \le
  \frac{1}{k}   \left| \frac{\partial^k h(x)}{ \prod_{j=1}^k \partial x_{i_j}} \right|
\end{align*}
for all $k= 1, \ldots, n$ and all $x \in \R^d$.  {Moreover,} if
$h \in \Lip({\R^d,} 1)$
then  $g \in \mathcal{F}(\mathcal{A}_1)$ and
\begin{align}\label{eq:mexkes}
  &     \sup_{x \in \R^d} {\norm{ \grad g(x)}}   \leq 1 \quad
     \mbox{ and } \quad \sup_{x \in \R^d} {\norm{ \grad^2g(x)
      }}_{\mathrm{HS}} \leq \sqrt{\frac{2}{\pi}}
      \norm{\Sigma^{-1/2}}_{\mathrm{op}}.
\end{align}
{Much more is known on the properties of these solutions, and we refer to
\cite{meckes2009stein} for an overview. }
\end{example}

\begin{example} [Log-concave Stein factors, \cite{mackey2016multivariate}]
  \label{lem:bouddd2}
  Let $P_1$ have pdf $p_1$ with full support
  $\Omega_{p_1} = \mathbb{R}^d$, and consider the  Stein
  equation
    \begin{equation}\label{eq:stronglogconc}
 \Delta g + \left\langle  \grad \log p_1, \grad g \right\rangle
   = h - \E_{p_1}h
 \end{equation}
with $h \in \Lip({\R^d,} 1)$.
{For a function $g\Colon \R^d \rightarrow \R$ introduce $M_j, j=2, 3$ as}
\[
  M_j(g) := \sup_{x, y \in \R^d, x \neq y} \frac{\| \grad^{k-1} g(x) - \grad^{k-1} g(y)\|_{\mathrm{op}}}{\| x - y\|}.
\]
Recall that a function $f \in \Cont^2(\R^d)$ is {\it $k$-strongly concave}
for $k>0$ if for all $x, y \in \R^d$,
$ y^T \grad^2 f(x) y \leq - k \| y\|^2.$ Suppose that
$\log p_1 \in \Cont^4(\R^d)$ is $k$-strongly concave with
$M_3(\log p) \leq L_3$. {Then} \eqref{eq:stronglogconc} has a solution
$g \in \mathcal{F}(\mathcal{A}_1)$ which satisfies
\begin{align*}
  &     \sup_{x \in \R^d} {\norm{ \grad g(x)}}   \leq \frac{2}{k}
     \mbox{ and }\sup_{x \in \R^d} {\norm{ \grad^2g(x)
      }}_{\mathrm{HS}} \leq  \frac{2L_3}{k^2} + \frac{1}{k}M_2(h).
  \end{align*}
     \end{example}

  \begin{example}[More general Stein factors,
    \cite{gorham2016measuring,fang2018multivariate}] \label{ex:mcoahsoin}
    Let $P_1$ with  support $\Omega_{p_1}= \R^d$ be invariant measure of
    an It\^o diffusion as in Example
    \ref{ex:steinfiffufi} and consider the strong Stein equation
    (using the notations from that example)
    \begin{align}\label{eq:gmame}
      \left\langle a(x) + c(x), \grad^2g(x)  \right\rangle_{\mathrm{HS}}
                         - \left\langle \mathcal{T}_{\dv , p_1}a(x) + f(x), \grad g(x)
                         \right\rangle = h(x) - \E_{p_1}h
    \end{align}
    with $h \in \Lip({\R^d,} 1)$. In \cite{gorham2016measuring}, it is assumed  that the transition semigroup of the diffusion
    satisfies a condition of \emph{Wasserstein decay rate} (see their
    Definition 4), while \cite{fang2018multivariate} make more
    analytical assumptions (see their Assumption 2.1); under these
    assumptions, Equation \eqref{eq:gmame} has a solution
    $g \in \mathcal{F}( \mathcal{A}_1)$ which is twice continuously
    differentiable and Stein factors are available (see \cite[Theorem
    5]{gorham2016measuring} and \cite[Theorem
    3.1]{fang2018multivariate}).
  \end{example}

{For the remainder of the section we focus on the case where}
  %
$p_1$ {is} the pdf of $\mathcal{N}(0, \Sigma_1)$
   with positive
   definite
 covariance $\Sigma_1$.  {Then the bounds on $\grad g$ and
   $\grad^2g$ can be read
   from Example \ref{ex:steinfac} and t}he following two observations follow directly.
{First, b}y \eqref{wassfish},
     if  $P_2$ has full support $\R^d$
  then $\kappa_2(\mathrm{I}_d, \mathrm{I}_d) = 0$ {and the  first bound in
   \eqref{eq:mexkes}} gives
  $$ \mathcal{W}_1(P_1, P_2) \leq I(p_2 \mid p_1).$$  {Hence, in
    particular, for $P_2$ with support $\R^d$ then any 
    {central limit theorem (CLT)}
    in Fisher information directly translates to one in Wasserstein
    distance}; {if the support is not $\R^d$ then some adaptations
    are necessary in order to incorporate the integration constants.}
  {Second, if $P_2$ is centred with a Stein kernel $ \pmb \tau_2$
    which is allowed as $\mathbf{F}_2$ in \eqref{eq:difffff2},}
  choosing $\pmb \tau_1 = \Sigma_1$ in \eqref{eq:aliuehg} the
  Cauchy--Schwarz inequality and the second bound in \eqref{eq:mexkes}
  give
\begin{align}
\label{eq:usususu}
  \mathcal{W}_1(p_1, p_2)
  \leq
  \sqrt{\frac{2}{\pi}} \,
  \norm{\Sigma_1^{-1/2}}_{\mathrm{op}}
  \E_{p_2} \norm{\pmb \tau_2 - {\Sigma_1}}_{\mathrm{HS}}
    +
  \kappa_2(\Sigma_1, \pmb \tau_2)
  \, .
\end{align}
Thus,
{any CLT in Stein kernel discrepancy leads to a CLT in
  Wasserstein distance. For instance, using {known properties of}
  Stein kernels, the following result is straightforward.}

\begin{example}[CLT in Wasserstein]
  Suppose that $p_2$ is the pdf of the random variable
  $W_n = (\sum_{i=1}^n X_i)/\sqrt n$ where the $X_i\sim p$ are i.i.d.\
  random vectors with mean 0, variance $\mathrm{I}_d$ and (common)
  Stein kernel $\pmb \tau$. Then, following \cite[Theorem
  3.2]{courtade2017existence}, one shows that
  $$\pmb\tau_2(x) = n^{-1}\sum_{i=1}^n \E \left[ \pmb
    \tau(X_i) \mid W_n =x \right]$$ is a Stein kernel for $p_2$
and $ \E_{p_2} \norm{
  \pmb \tau_2 - \mathrm{I}_d}_{\mathrm{HS}} \leq \frac{1}{\sqrt n}
\E_p \norm{\pmb \tau - \mathrm{I}_d}_{\mathrm{HS}}. $
If furthermore  $p$ has
  support $\R^d$ then $\kappa_2(\Sigma_1, \pmb \tau_2) = 0$ and
  \eqref{eq:usususu} yields
   \begin{equation*}
       \mathcal{W}_1(\gamma, p_2)  \leq  \frac{1}{\sqrt n}\sqrt{\frac{2}{\pi}}
\E_p \norm{\pmb \tau - \mathrm{I}_d}_{\mathrm{HS}}.
\end{equation*}
Under the same conditions as in \cite[Corollary
2.5]{courtade2017existence} 
this leads to a CLT in Wasserstein
distance with correct dependence on the dimension and on the sample
size $n$.
\end{example}

{The next example serves to  compare our approach with standard results.}

\begin{example}[Comparison {between} {Gaussians}]\label{ex:compafonormales}
  For $i=1,2$ let $p_i$ be a centred Gaussian pdf
  with covariance
  $\Sigma_i$. Inequality \eqref{eq:usususu} applies with
  $\kappa_2(\Sigma_1, \Sigma_2) = 0$ so that
  \begin{equation*}
    \mathcal{W}_1(p_1, p_2)
    \leq
    \sqrt{\frac{2}{\pi}} \min \left\{
      \norm{\Sigma_1^{-1/2}}_{\mathrm{op}}, \>
      \norm{\Sigma_2^{-1/2}}_{\mathrm{op}}
    \right\}
    \norm{\Sigma_1 -\Sigma_2}_{\mathrm{HS}}
    \, .
  \end{equation*}
  A general bound for Wasserstein distance between Gaussians is also
  given in \cite[Lemma 2.4]{chafma2010}. For the sake of {illustration,}
  take $d=2$ and $\Sigma_i =
  \begin{pmatrix}
    1 & \rho_i \\ \rho_i & 1
  \end{pmatrix}
  $
  for some $\rho_i \in [0, 1]$. Then our bound reads
  \begin{equation}
  \label{eq:gmsb}
    \mathcal{W}_1(p_1, p_2)
    \leq
    \frac{2}{\sqrt{\pi}} \min \left\{
      \frac{1}{\sqrt{1+\rho_1}}, \frac{1}{\sqrt{1+\rho_2}}
    \right\} |\rho_2 - \rho_1|
  \end{equation}
  whereas that from  \cite[Lemma 2.4]{chafma2010} is
  \begin{equation}\label{eq:chafff}
    \mathcal{W}_1(p_1, p_2) \leq \sqrt{
      4 - 2 \sqrt{1-\rho_1}\sqrt{1-\rho_2} -
      2 \sqrt{1+\rho_1}\sqrt{1+\rho_2}
    } \, .
  \end{equation}
  Finally,
  an efficient coupling can
  be constructed directly (simply write $X_j = \Sigma_j^{1/2} N$ for
  $N$ standard normal),  proving that
  \begin{equation}
  \label{eq:refer}
    \mathcal{W}_1(p_1, p_2)
    \leq
    \sqrt{\frac{{\pi}}{2}} \> |\rho_1 - \rho_2|
    \, .
  \end{equation}
  We provide illustrations of the comparison of these bounds in
  Figure \ref{fig:compachaff}. {In this scenario, the bound \eqref{eq:refer} is outperformed by  the bounds from \eqref{eq:gmsb} and \eqref{eq:chafff}, with our new bound \eqref{eq:gmsb} outperforming \eqref{eq:chafff} when $\rho_1 = 0.5$ and $\rho_2 $ is large.}
  \begin{figure}
    \centering
    \captionsetup{width=0.88\textwidth}
  \includegraphics[width=0.45\textwidth]{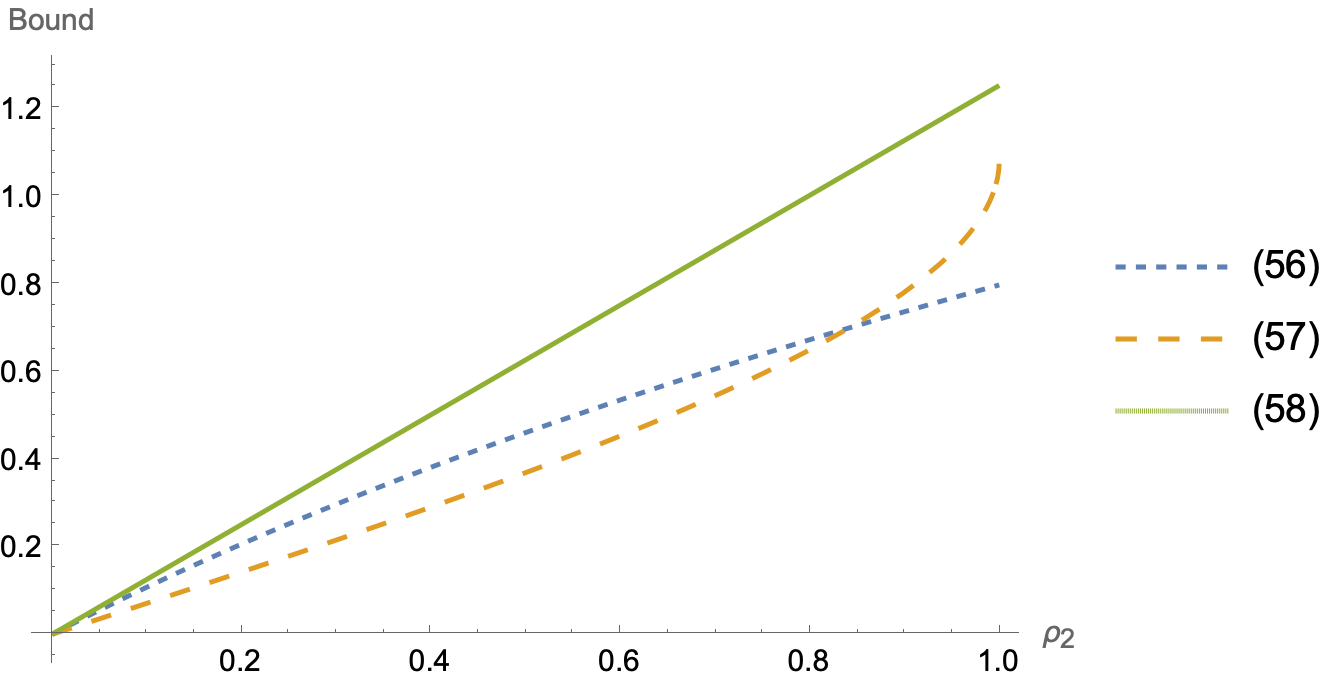} 
  \includegraphics[width=0.45\textwidth]{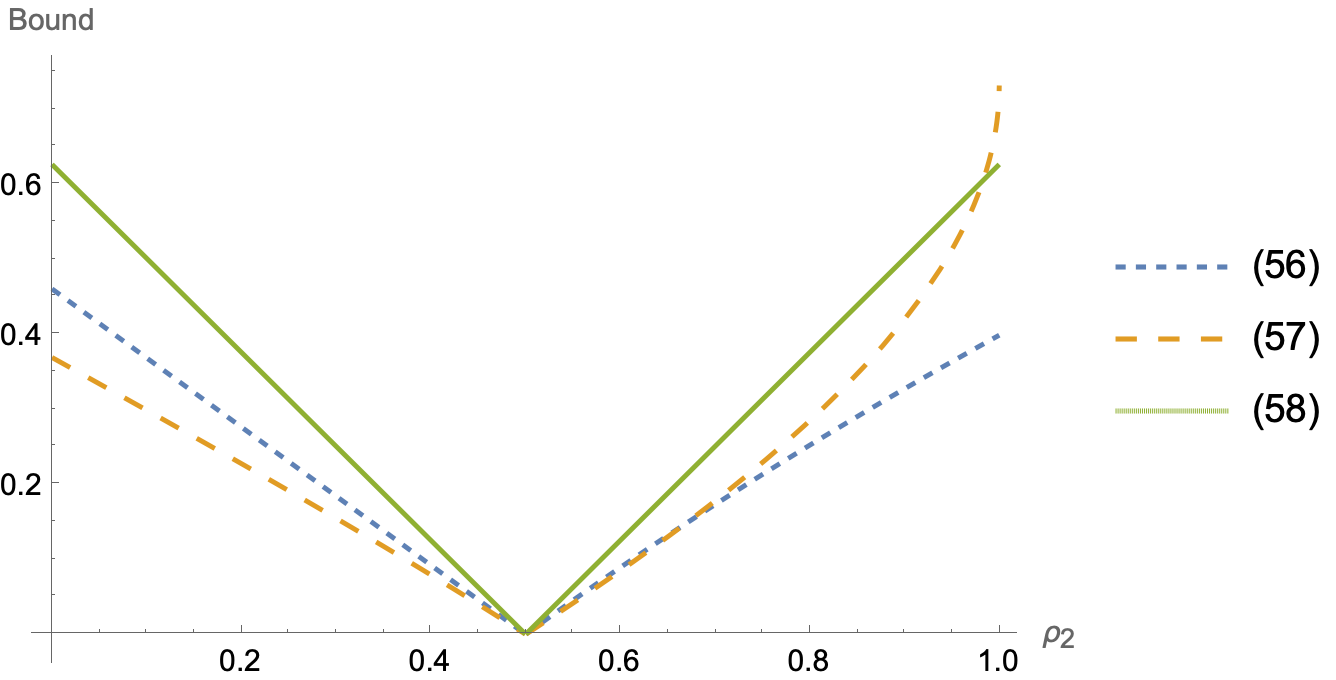}
  \caption[test]{\small  {Comparison between {Gaussians}}. We fix $d =
    2$ and,  $\Sigma_i = \begin{pmatrix}
    1 & \rho_i  \\ \rho_i &  1
  \end{pmatrix}$ for $i=1, 2$.
  Both plots report the bounds \eqref{eq:gmsb} {(blue dots)}, \eqref{eq:chafff} {(orange dashes)},
  and \eqref{eq:refer} {(a green line)} for $\rho_2 \in (0, 1)$. {For the} 
  left plot, 
  $\rho_1 = 0$; {for}  the right plot, 
  $\rho_1 = 0.5$.  {The new bound \eqref{eq:gmsb} outperforms both \eqref{eq:refer}  and  \eqref{eq:chafff} when $\rho_1 = 0.5$ and $\rho_2 $ is large. 
  }
  \label{fig:compachaff}}
  \end{figure}
 \end{example}



We now concentrate on \eqref{eq:difffff12} {in the special case $\mathbf{F}_2 = \mathbf{F}_1$ in Theorem \ref{theo:comap}.} 
Similar
arguments as {for Theorem \ref{theo:comap}}  
lead to the following
result.

\begin{prop} \label{cor:wass-dist-betw-1} Suppose that $P_1$ and $P_2$
  have {pdfs $p_1$ and $p_2$} with nested supports
{$\Omega_{p_2} \subseteq \Omega_{p_1}$}.  Instate the notations and
  assumptions from Theorem \ref{theo:comap}, but with
  $\mathbf{F}_2 = \mathbf{F}_1$.  Then, letting $\pi_0 = p_2/p_1$, it holds that
\begin{align} \label{bound1}
  \mathcal{W}_1(P_1, P_2) \leq      \sup_{g \in
   \mathcal{G}_2(\mathbf{F}_1, \mathbf{F}_1)} \left| \E_{p_1}   \langle \mathbf{F}_1\grad \pi_0,
      \grad g \rangle   \right| +\kappa_2(\mathbf{F}_1, \mathbf{F}_1).
\end{align}
In particular,
  \begin{align}
                                \label{prop:nesteddensitie}
                                \mathcal{W}_1({P_1, P_2}) &  \leq \sup_{g \in
    \mathcal{G}_2(\mathbf{F}_1, \mathbf{F}_1)}\sup_{x \in \R^d}\norm{ \grad g(x)
 }  \, \E_{p_1} \norm{
                                \mathbf{F}_1 \grad\pi_0} +
                                \kappa_2(\mathbf{F}_1, \mathbf{F}_{1}) \\
                      \label{eq:nestedforpoinca}
 \mathcal{W}_1({P_1, P_2})   &   \leq \sup_{g \in
    \mathcal{G}_2(\mathbf{F}_1, \mathbf{F}_1)} (\E_{p_1}
 \norm{\grad g}^2)^{1/2} \,\sqrt{\E_{p_1} \norm{\mathbf{F}_1
                               \grad\pi_0}^2}+\kappa_2(\mathbf{F}_1,
                             \mathbf{F}_{1}) .
  \end{align}
  If, furthermore, $\pmb \tau_1$ is a Stein kernel for $p_1$ then
  \begin{align} \label{bothbounds} \mathcal{W}_1 (P_1,P_2) \geq
    \norm{\E{_{p_1}} [ \pmb \tau_1 \grad \pi_0 ]}.
\end{align}
\end{prop}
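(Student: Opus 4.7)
The plan is to derive all four bounds directly from Theorem \ref{theo:comap} by substituting $\mathbf{F}_2 = \mathbf{F}_1$ and exploiting the density ratio $\pi_0 = p_2/p_1$. Setting $\mathbf{F}_2 = \mathbf{F}_1$ in the bound of Theorem \ref{theo:comap} kills the Hessian term $\langle \mathbf{F}_1 - \mathbf{F}_2, \grad^2 g\rangle_{\mathrm{HS}}$, leaving only the first-order expression $\E_{p_2}[\langle (\mathcal{T}_{\mathrm{div}, p_1} - \mathcal{T}_{\mathrm{div}, p_2})\mathbf{F}_1, \grad g\rangle]$. The key computation is then to evaluate the difference of divergence operators acting on the common matrix $\mathbf{F}_1$. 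Writing $p_2 = \pi_0 p_1$ on $\Omega_{p_1}$ (extending $\pi_0$ by $0$ outside $\Omega_{p_2}$) and applying the product rule \eqref{eq:39plus} to $\div(\pi_0 \cdot p_1 \mathbf{F}_1)$ yields
\begin{align*}
  \mathcal{T}_{\mathrm{div}, p_2} \mathbf{F}_1 = \mathcal{T}_{\mathrm{div}, p_1} \mathbf{F}_1 + \frac{\mathbf{F}_1 \grad \pi_0}{\pi_0},
\end{align*}
so that $(\mathcal{T}_{\mathrm{div}, p_1} - \mathcal{T}_{\mathrm{div}, p_2})\mathbf{F}_1 = - \mathbf{F}_1 \grad \pi_0 / \pi_0$. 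Changing measure back to $p_1$ via $p_2/\pi_0 = p_1$ absorbs the $\pi_0$ denominator and gives $\E_{p_2}[\langle \mathbf{F}_1 \grad \pi_0 / \pi_0, \grad g\rangle] = \E_{p_1}[\langle \mathbf{F}_1 \grad \pi_0, \grad g\rangle]$, which establishes \eqref{bound1}.

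Inequalities \eqref{prop:nesteddensitie} and \eqref{eq:nestedforpoinca} are then immediate consequences of \eqref{bound1}: the first follows by applying the pointwise Cauchy--Schwarz inequality $|\langle \mathbf{F}_1 \grad \pi_0, \grad g\rangle| \leq \|\mathbf{F}_1\grad \pi_0\| \, \|\grad g\|$ and pulling out the uniform bound $\sup_x \|\grad g(x)\|$; the second follows from Cauchy--Schwarz in $L^2(p_1)$ applied to the two factors $\|\mathbf{F}_1 \grad\pi_0\|$ and $\|\grad g\|$.

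For the lower bound \eqref{bothbounds}, the plan is to apply the defining Stein kernel identity \eqref{eq:defstek2} to $p_1$ with the test function $g = \pi_0$, which yields
\begin{align*}
  \E_{p_1}[\pmb\tau_1 \grad \pi_0] = \E_{p_1}[(\mathrm{Id} - \nu_1)\pi_0] = \int (x - \nu_1) p_2(x)\,\mathrm{d}x = \nu_2 - \nu_1,
\end{align*}
where $\nu_i$ is the mean of $P_i$. Next, for every unit vector $u \in \R^d$ the function $h_u(x) = \langle u, x\rangle$ is 1-Lipschitz, so by the definition of Wasserstein distance,
\begin{align*}
  \mathcal{W}_1(P_1, P_2) \geq |\E_{p_2} h_u - \E_{p_1} h_u| = |\langle u, \nu_2 - \nu_1\rangle|.
\end{align*}
Taking supremum over $\|u\| = 1$ gives $\mathcal{W}_1(P_1, P_2) \geq \|\nu_2 - \nu_1\| = \|\E_{p_1}[\pmb\tau_1 \grad \pi_0]\|$, as claimed.

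The main technical obstacle will be the verification that $\pi_0 \in \mathrm{dom}(p_1, \pmb\tau_1)$ and that $\pi_0$ (extended by zero) is sufficiently regular for the application of the product rule at the boundary $\partial \Omega_{p_2}$; in particular one must check that $\pmb\tau_1 \pi_0 \in \mathcal{F}(p_1)$ and that $\pmb\tau_1 \grad \pi_0 \in L^1(p_1)$, which amounts to mild integrability of $\pmb\tau_1$ against $p_2$ and its gradient. Similarly, the integration-by-parts computation that leads to \eqref{bound1} implicitly uses that the boundary terms vanish when passing from integration over $\Omega_{p_2}$ to integration over $\Omega_{p_1}$, which is exactly where the nested-support hypothesis $\Omega_{p_2}\subset \Omega_{p_1}$ plays its role.
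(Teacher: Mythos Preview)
Your proof is correct and follows essentially the same route as the paper. The paper phrases the key identity as $\mathcal{T}_{\mathrm{div}, p_2}\mathbf{F} = \mathcal{T}_{\mathrm{div}, p_1}\mathbf{F} + \mathbf{F}\,\grad\log\pi_0$, which is your identity written with $\grad\log\pi_0$ in place of $\grad\pi_0/\pi_0$; the change of measure and the two Cauchy--Schwarz applications are handled identically. For the lower bound \eqref{bothbounds} the paper also tests against linear functions $\mathrm{Id}_e$ and invokes the Stein-kernel identity \eqref{eq:defstek2}, merely organising the steps in the reverse order (starting from the Lipschitz bound and working towards $\E_{p_1}[\pmb\tau_1\grad\pi_0]$ rather than first identifying the latter as $\nu_2-\nu_1$).
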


\begin{proof}
Since $p_1$ and $p_2$
have nested supports, it
{follows}
{from \eqref{eq:42}} that
  \begin{align} \label{eq:nested} \mathcal{T}_{\dv , p_2} \mathbf{F} =
\mathcal{T}_{\dv , p_1} \mathbf{F} + \mathbf{F} \grad
\log(p_2/p_1)
\end{align}
{(over $\Omega_{p_2}$)} for any $\mathbf{F}$.
  {The bound {\eqref{bound1} now}  follows directly from Theorem
    \ref{theo:comap}}.
        {Inequalities \eqref{prop:nesteddensitie}
    and \eqref{eq:nestedforpoinca} follow from different applications
    of the Cauchy--Schwarz inequality}. {For  \eqref{prop:nesteddensitie}, one bounds
    $\sum_{i=1}^d | \partial_i g |$ by $\norm{ \grad g}$ and for \eqref{eq:nestedforpoinca}
    one uses the  Cauchy--Schwarz inequality inside the expectation.}

{{For \eqref{bothbounds},} let $e$ be a unit
  vector. Let $\nu_1 = \E_{p_1} \mathrm{Id}$ be the mean of
  $p_1$, recalling that $\mathrm{Id}$ is the identity function and that
  $\mathrm{{Id}}_e = \left\langle \mathrm{Id}, e
\right\rangle$   {denotes}
{the marginal projection} in direction $e$. Noting that  $\mathrm{Id}_e$ is 1-Lipschitz and
  using the nested structure we have {with}
  $\nu_e = \E_{{p_1}} [\mathrm{Id}_e]$:
\begin{align*}
  \mathcal{W}_1 ({P}_1,P_2)& \geq \left| \E_{{p_1}}  [ \mathrm{Id}_e]  -
                               \E_{{p_2}} [  \mathrm{Id}_e]  \right| =
                             \left| \nu_{e, 1} -  \E_{{p_1}}  [
                             \mathrm{Id}_e\pi_0] \right|\\
  &
                             = \left| \E_{{p_1}}  [ (\nu_{e, 1} -  \mathrm{Id}_e)\pi_0)]  \right|
                               =
                               \left| \E_{{p_1}} [ \langle (\nu -
    {\mathrm{Id}) \pi_0} ,e\rangle] \right|     = \left| \langle \E_{{p_1}} [  \pmb \tau_1 \grad
    \pi_0 ] , e \rangle\right|
\end{align*}
where the last identity follows from \eqref{eq:defstek2}.} Taking
$e = \E_{{p_1}} [ \pmb \tau_1 \grad {\pi_0} ] / \norm{\E _{{p_1}}[
  \pmb \tau_1 \grad \pi_0 ]}$ gives the claim
.
\end{proof}

For evaluating expectations it may be useful to note that the
expectations  $\E_{p_1}$  in
  Proposition \ref{cor:wass-dist-betw-1} can be expressed in terms of
  $\E_{p_2}$, because
  \begin{align*}
  \E_{p_1}\left[
    \langle \mathbf{F}_1\grad  \pi_0,
    \grad g \rangle \right]  =\E_{p_2}\left[
    \langle \mathbf{F}_1\grad \log \pi_0,
    \grad g \rangle \right].
\end{align*}

\begin{rem}
  \label{rem:boundpi}
  Let $\pmb \tau_1$ be a Stein kernel for $p_1$ and suppose, for
  simplicity, that
  $\mathcal{G}_2(\pmb \tau_1, \pmb \tau_1) \subseteq
  \mathcal{F}(\mathcal{A}_2)$ so that
  $\kappa_2(\pmb \tau_1, \pmb \tau_1) =0$. When $d=1$, in
  \cite[Equation (4.2)]{ley2015distances}, the following bound is
  provided:
\begin{align*}
|\E{_{p_1}} [\pmb \tau_1   \pi_0']| \leq \mathcal{W}_1({P}_1,{P}_2) \leq
\E{_{p_1}} [\pmb \tau_1  | \pi_0'|].
\end{align*}
{If $\mathbf{F} = \pmb \tau_1$ satisfies the assumptions of Theorem
    \ref{theo:comap}, then,}
 combining \eqref{bothbounds} and \eqref{prop:nesteddensitie},
    \begin{align}
\label{bothboundspr}     \norm{ \E{_{p_1}} [ \pmb \tau_1 \grad \pi_0
  ]}   & \leq   \mathcal{W}_1 (P_1,P_2)   \leq    C  \E_{p_1}\left[  \norm{
         \pmb \tau_1 \grad \pi_0 } \right]  . 
    \end{align}
    with
    $C =\sup_{g \in \mathcal{G}_2(\pmb \tau_1, \pmb \tau_1)}\sup_{x
      \in \R^d}\norm{ \grad g(x)}$, {yielding a similar bound as in
      the case $d=1$.} {In particular, if $p_1$ is the Gaussian
      distribution then $C = 1$ (recall Example \ref{lem:bouddd})
      which leads to the same bound as in the univariate case, whereas
      if $p_1$ is only assumed to be  $k$-strongly log-concave with
      $\log p_1 \in \Cont^4(\R^d)$ then
      the best available bound is, to the best of our knowledge, $C
      \leq 2/k$
      (recall Example \ref{lem:bouddd2}). Bounds in more general cases
      are also available (recall Example \ref{ex:mcoahsoin}). }
\end{rem}

\begin{example}[Azzalini--Dalla Valle skew-normal distributions vs
  multivariate normal]\label{ex:azzal}
The pdf of the centred Azzalini--{Dalla Valle} type r.v.\ $X \in \R^d$ is given by
$$p_\alpha(x)=2\omega_d(x;\Sigma) \Phi(\alpha^T x),$$
where $\omega_d(x;\Sigma)$ is the pdf of {the $d$-dimensional normal distribution} $\mathcal N(0,\Sigma)$,
$\Phi$ the c.d.f.\ of the standard normal on $\R$, and
$\alpha \in \R^d$ is a skew{ness}  parameter, {see
  \cite{azzalini1996multivariate}}.  {In \cite{LRS16}, an exact
  expression for the Wasserstein distance between $p_2 = p_{\alpha}$
  and $p_1 = \omega_d$ is given for $d=1$; here we
  extend this result to general $d$.}  
We aim to apply Proposition \ref{cor:wass-dist-betw-1} with
$\mathbf{F}_1 = \Sigma$ the covariance (a Stein kernel for the
Gaussian) and $\pi_0 (x) = 2 \Phi (\alpha^T x)$. Solutions of the
Gaussian Stein equations exist, recall {Example} \ref{lem:bouddd}; these
solutions also belong to $\mathcal{F}(\mathcal{A}_2)$ because $\pi_0$
is bounded, hence $\kappa_2(\Sigma, \Sigma) = 0$.  {It is easy to see that
 $2 \Phi (\alpha^T x)  - \E_{\mathcal N(0,\Sigma)} [2 \Phi (\alpha^T x) ] \in W_0^{1,2}(\mathcal N(0,\Sigma))$.
} Then
\eqref{bothboundspr} can be applied with $C=\sup_{g \in
    \mathcal{G}_2(\Sigma, \Sigma)}\sup_{x \in \R^d}\norm{ \grad g(x)
 } =1$ (recall {Example} \ref{lem:bouddd}). Since
$\grad \pi_0 (x) = 2\alpha \gamma (\alpha^T x)$  with $\gamma$ the
one-dimensional standard normal pdf,
the upper and lower bounds
coincide in \eqref{bothboundspr}, leading to
\begin{align*}
  \mathcal{W}_1(p_{\alpha}, \omega_d) &=   2   \norm{
  \Sigma \alpha }   \E_{\omega_d} \left[  \gamma (\alpha^T  {{\rm Id}})
   \right] \\
 &= 2   \norm{\Sigma \alpha }   (2\pi)^{-(d+1)/2}
                          \frac{1}{\sqrt{\mathrm{det}(\Sigma)}}\int  \exp\left( -
                          \frac{1}{2}\left(   x^T  (\alpha \alpha^T+\Sigma^{-1}) x \right)
                           \right)\,\dl x
                           \\
& =\frac{2   \norm{
  \Sigma \alpha }}{\sqrt{2\pi(1+\alpha^{T} \Sigma \alpha)}}.
\end{align*}
where the second equation follows from the matrix determinant lemma.
In particular we recover the univariate result {from \cite{LRS16}}
when $d = 1$.
\end{example}

{The nestedness assumption is naturally satisfied in a Bayesian
  setting when comparing the effect of a specific prior against the
  uniform prior on the posterior distribution. We use this context to
  illustrate our bounds on two last examples.}

\begin{example}[{Normal versus uniform prior in normal model}]\label{ex:normvsunifom}
  Consider a normal ${\mathcal{N}}(\theta , \Sigma)$ model with mean
  $\theta \in \R^d$ and positive definite covariance matrix $\Sigma$.
  Let
$ P_1$, with pdf $p_1$, denote the posterior distribution
of $\theta$ with uniform prior and $P_2$, with pf $p_2$,
the posterior distribution with {prior ${\mathcal{N}}(\nu,
  \Sigma_2)$;}
$\Sigma_2$ is assumed positive definite.  Suppose that a sample
$(x_1,\ldots,x_n)$ with $x_i \in \R^d$ for all $i$ is
observed.  
Inequality \eqref{bothboundspr} applies with $P_1 = \mathcal N(\bar x,
n^{-1}\Sigma)$, with $\bar x = \frac{1}{n}\sum_{i=1}^n x_i$, and $P_2
= \mathcal N(\tilde \mu_n, \tilde \Sigma_n)$ with
\begin{align*}
& \tilde \mu_n
= \nu+n \tilde \Sigma_n \Sigma^{-1} (\bar x - \nu) \quad  \mbox{ and } \quad
 \tilde \Sigma_n = (\Sigma_2^{-1}+n\Sigma^{-1})^{-1} \notag
\end{align*}
and $C_2(\pmb \tau_1, p_1) = 1$. Since
$p_2(\theta) \propto p_1(\theta) \exp \left( (\theta-\nu)^T \Sigma_2^{-1}
(\theta-\nu) \right)$, it holds that
$\grad \log \pi_0(\theta) =  -\Sigma_2^{-1} (\theta-\nu).$
Since $\pmb \tau_1 = n^{-1}\Sigma$ is a Stein kernel for $p_1$,
inequality  \eqref{bothboundspr} (re-expressed in terms of $p_2$) becomes
\begin{align*}
\norm{n^{-1}\Sigma \Sigma_2^{-1}\E_{p_2}  \left[{\rm Id} - \nu \right]} \leq \mathcal{W}_1(P_1, P_2) \leq \E_{p_2} \left[ \norm{
  n^{-1}\Sigma  \Sigma_2^{-1}  ({\rm Id}-\nu)} \right].
\end{align*}
{These expectations can be evaluated and after some calculations we
  find}
  \begin{align*}
    \norm{  (\mathrm{I}_d+n\Sigma^{-1}\Sigma_2)^{-1}
                                                                   (\bar
    x - \nu) } &  \leq \mathcal{W}_1(P_1, P_2)   \leq \norm{  (\mathrm{I}_d+n\Sigma^{-1}\Sigma_2)^{-1}
                                                                   (\bar
                       x - \nu) }   \\
    & \quad + n^{-1}\norm{\Sigma
  \Sigma_2^{-1} (\Sigma_2^{-1}+n\Sigma^{-1})^{-1/2}}_{\mathrm{op}}  \frac{\sqrt
    2\Gamma(d/2+1/2)}{\Gamma(d/2)}.
  \end{align*}
{The} multivariate bound indeed simplifies to
the univariate {bound from \cite{ley2015distances}} when $d = 1$.

\end{example}

\begin{example}[Bayesian logistic regression with Gaussian prior] \label{ex:bayeslog}
  We follow \cite[Example 1]{gorham2016measuring}.  Consider the
  log pdf of a Bayesian logistic regression posterior based on a
  dataset of $L$ observations $\mathbf{x}_{\ell} = (v_{\ell}, y_{\ell})$,
  $\ell = 1, \ldots, L$, {with $v_\ell \in \R^d$ a vector of covariates and $y_\ell \in \{0,1\}$} and a $d-$dimensional $\mathcal{N}(\nu, \Sigma)$ prior {on the parameter $\beta \in \R^d$ of the logistic regression}:
\begin{equation*}
\log p_2(\beta) =  \kappa(\mathbf{x}) - \frac{1}{2} \norm{
  \Sigma^{-1/2}(\beta
 - \nu) }^2 - \sum_{\ell = 1}^L \log \left( 1 + \mathrm{exp}\left(
   -y_{\ell} \left\langle v_{\ell}, \beta \right\rangle \right)  \right)
\end{equation*}
where $ \kappa(\mathbf{x})$ is an irrelevant normalizing constant, the
first summand is the multivariate Gaussian prior {on $\beta$} and the second term
is the logistic regression likelihood. Treating the Gaussian prior as
the target $p_1$ (and therefore the logistic likelihood as $\pi_0$),
it follows from   \eqref{bothboundspr} {and  $\E_{p_1} ( g \grad \pi_0) = \E_{p_2} ( g \grad \log \pi_0)$,} with  $C_2(\Sigma, p_1) \le
1$,  $\pmb \tau_1 = \Sigma$ and
\begin{align*}
 \grad \log \pi_0 =  \sum_{\ell = 1}^L \frac{-y_{\ell}
}{1+\mathrm{exp}\left(
   -y_{\ell} \left\langle v_{\ell}, \beta \right\rangle  \right) } v_{\ell} {\mathrm{exp}\left(
   -y_{\ell} \left\langle v_{\ell}, \beta \right\rangle  \right) } ,
\end{align*}
that \begin{align*}
       \norm{  \E_{p_2} \left[ \sum_{\ell = 1}^L \frac{-y_{\ell}   \Sigma v_{\ell} {\, \mathrm{exp}\left(
   -y_{\ell} \left\langle v_{\ell}, \beta \right\rangle  \right) } }{1+\mathrm{exp}\left(
   -y_{\ell} \left\langle v_{\ell}, \beta \right\rangle \right)}
       \right]  }&  \leq  \mathcal{W}_1(P_1, P_2) \\
        & \leq   \E_{p_2} \left[ \norm{ \sum_{\ell = 1}^L \frac{-y_{\ell}   \Sigma v_{\ell} {\, \mathrm{exp}\left(
   -y_{\ell} \left\langle v_{\ell}, \beta \right\rangle  \right) }}{1+\mathrm{exp}\left(
   -y_{\ell} \left\langle v_{\ell}, \beta \right\rangle \right)} }
  \right]
     \end{align*}
     (expectation is over $\beta \sim p_2$).  Similarly the roles can
     be reversed, with $p_1$ now the logistic regression likelihood
     and $\pi_0$ the Gaussian prior; here a Stein kernel is available
     as well, see \cite[Example 1]{gorham2016measuring}.
\end{example}

\subsection{{Weak Stein equations and a bound under a Poincar\'e condition}}
\label{sec:stein-factors-under}

Unlike in the 1-dimensional case, bounds such as those in Examples
\ref{lem:bouddd}, \ref{lem:bouddd2} or \ref{ex:mcoahsoin} are not
available for general target $p_1$, and even existence, let alone
regularity, of a solution of the Stein equation is {often} out of
reach. One way to bypass the necessity for solving Stein equations is
to work directly with Fisher information distance $I(p_2 \mid p_1)$
or Stein kernel discrepancies $S(p_2 \mid p_1)$ {and} then
use non-Stein's method related connections with classical
discrepancies such as Total Variation, 1-Wasserstein or
2-Wasserstein. This approach was used e.g.\ in
\cite{nourdin2013integration,ledoux2015stein,
  courtade2017existence,1i2018stein}. {We conclude the paper by
  introducing an alternative route via the notion of \emph{weak Stein
    equations} (and corresponding \emph{weak Stein factors}), as
  follows.}

  {{In this subsection we i}nstate the notation and assumptions of
Proposition~\ref{cor:wass-dist-betw-1}.}
{In the proof of Theorem~\ref{theo:comap}, the Stein equation \eqref{eq:strongsteinequ} is only used in a weak form, namely
\begin{equation}
\label{eq:weaksteinequ}
  {\E_{p_2} \bigl[ \mathcal{A}_{p_1} g \bigr]}
  =
  \E_{p_2} \left[
    \left\langle
      \mathcal{T}_{\dv, p_1} \mathbf{F}_1, \grad g
    \right\rangle + \left\langle
       \mathbf{F}_1, \grad^2 g
    \right\rangle_{\mathrm{HS}}
  \right]
  =
  \E_{p_2} \bigl[ h - \E_{p_1} h \bigr]
  \, .
\end{equation}
{
Now recall that $ p_2 = \pi_0 p_1 $ and that
the Stein operator has actually been obtained by
\eqref{eq:4}, leading to
\[
  \E_{p_2} \bigl[ \mathcal{A}_{p_1} g \bigr]
  =
  \E_{p_2} \bigl[
    \mathcal{T}_{\dv, p_1} \bigl( \mathbf{F}_1^T \grad g \bigr)
  \bigr]
  =
  \E_{p_1} \bigl[
    \mathcal{T}_{\dv, p_1} \bigl( \mathbf{F}_1^T \grad g \bigr) \pi_0
  \bigr]
  \, .
\]
Applying \eqref{eq:37f}, we find that
\[
  \E_{p_2} \bigl[ \mathcal{A}_{p_1} g \bigr]
  =
  - \E_{p_1} \bigl[ \bigl\langle
    \mathbf{F}_1^T \grad g, \grad \pi_0
  \bigr\rangle \bigr]
  =
  - \E_{p_1} \bigl[ \bigl\langle
    \mathbf{F}_1 \grad \pi_0, \grad g
  \bigr\rangle \bigr]
  \, .
\]
Thus \eqref{eq:weaksteinequ} is equivalent to
\begin{equation}
\label{eq:weaksteinequ2}
  - \E_{p_1} \bigl[ \bigl\langle
    \mathbf{F}_1 \grad \pi_0, \grad g
  \bigr\rangle \bigr]
  =
  \E_{p_1} \bigl[ (h - \E_{p_1} h) \pi_0 \bigr]
  \, .
\end{equation}}%
{Classical results from functional analysis provide handles on
  expressions such as \eqref{eq:weaksteinequ2}. {We follow \cite{courtade2017existence} and
{introduce  $W^{1,2}(p)$, the natural (weighted) Sobolev space of
{weakly differentiable}
functions $ u \Colon \Omega \rightarrow \R $
{with finite}
(squared) Sobolev norm
\begin{align}\label{def:W.norm.ker}
 \| u \|_{W^{1,2}(p)}^2
 :=
 \| u \|_{L^2(p)}^2 + \| {\grad u} \|_{L^2(p)}^2
 \, ,
\end{align}
  see also \cite{fathi2022relaxing}.
  {\emph{In the rest of the paper, we shall assume that $ p $ is
  continuous on $ \Omega_p $.}}
  Then $W^{1,2}(p)$ is complete under $ \| \cdot \|_{W^{1,2}(p)} $.
  We also introduce the space  $W^{1,2}_0(p)$
  of all functions $ u \in  W^{1,2}(p) $ such that
  $ \int_{\Omega} \partial_e u = 0 $ for all $ e \in S^{d-1} $.}}

{Next,
{motivated by \eqref{eq:weaksteinequ2}},
following intuition from \cite{courtade2017existence},
under
the condition that $P$ admits a Poincar\'e constant, in the sense of definition \ref{def:poincareconstant},
the following holds.}

 \begin{prop}[Weak Stein equation and factor]
\label{prop:l2bound}
Let $P_1$ be as in Theorem \ref{theo:comap}. {Suppose furthermore that} $P_1$
has finite variance and Poincar\'e constant $C_{P_1}$. 
Let  $h \in \Lip({\Omega_{p_1},} 1)$.
Then there exists a
{function $ g \in W^{1, 2}_0(p_1)$, which solves}
\begin{equation}
\label{eq:wkstek1}
  \E_{p_1} \bigl[
    \left\langle  {\grad} v , \grad g \right\rangle
  \bigr]
  =
  \E_{p_1} \bigl[ (h - \E_{p_1} (h)) v \bigr]
\end{equation}
{for all $ v \in W^{1,2}_0(p_1) $ and is}
such that
\begin{equation}
\label{eq:stekfakwkstek1}
  \sqrt{\E_{p_1} \norm{\grad g}^2 } \leq C_{P_1} \, .
\end{equation}
\end{prop}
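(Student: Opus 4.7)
The plan is to reformulate \eqref{eq:wkstek1} as a variational problem and to invoke the Riesz representation theorem on a Hilbert space built from $W^{1,2}_0(p_1)$. The Poincar\'e hypothesis is exactly what turns the Dirichlet-type bilinear form $(u,w) \mapsto \E_{p_1}[\langle \grad u,\grad w\rangle]$ into an inner product on the subspace of $p_1$-centred functions, and the constant $C_{P_1}$ in \eqref{eq:stekfakwkstek1} will emerge from two successive applications of Poincar\'e.

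First I would set $H$ to be the subspace of $W^{1,2}_0(p_1)$ consisting of functions $u$ with $\E_{p_1}u = 0$, equipped with $\langle u, w\rangle_H := \E_{p_1}[\langle \grad u, \grad w\rangle]$. Poincar\'e ensures both positive-definiteness (if $\norm{u}_H = 0$ then $\grad u \equiv 0$, so $u$ is constant, hence zero by the mean-zero condition) and equivalence with the ambient Sobolev norm, making $(H,\langle\cdot,\cdot\rangle_H)$ a Hilbert space. On this space the linear functional $L(v) := \E_{p_1}[(h - \E_{p_1}h)v]$ is bounded: since $h \in \mathrm{Lip}(1)$ and $p_1$ has finite variance one has $h - \E_{p_1}h \in L^2(p_1)$, and Rademacher's theorem gives $\norm{\grad h} \le 1$ a.e., so Poincar\'e yields $\norm{h - \E_{p_1}h}_{L^2(p_1)}^2 \le C_{P_1}\,\E_{p_1}\norm{\grad h}^2 \le C_{P_1}$. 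Applying Cauchy-Schwarz and then Poincar\'e to $v \in H$ gives
\begin{equation*}
|L(v)| \le \norm{h - \E_{p_1}h}_{L^2(p_1)}\,\norm{v}_{L^2(p_1)} \le C_{P_1}\,\norm{v}_H.
\end{equation*}

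By the Riesz representation theorem there is then a unique $g \in H \subset W^{1,2}_0(p_1)$ with $\langle g,v\rangle_H = L(v)$ for every $v \in H$, and $\norm{g}_H = \norm{L}_{H^*} \le C_{P_1}$, which is precisely \eqref{eq:stekfakwkstek1}. To extend the identity from $v \in H$ to arbitrary $v \in W^{1,2}_0(p_1)$ I would decompose $v = (v - \E_{p_1}v) + \E_{p_1}v$ and note that both sides of \eqref{eq:wkstek1} are invariant under adding a constant to $v$ (the left because $\grad$ annihilates constants, the right because $h - \E_{p_1}h$ is $p_1$-centred).

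The main obstacle is the functional-analytic setup itself: one has to confirm that $H$ is actually a Hilbert space, which requires pinning down the precise definition of $W^{1,2}_0(p_1)$ (the excerpt only spells out $W^{1,1}_0(\R^d)$ in Section \ref{sec:notations-1}) and checking that the usual completion and density arguments go through under the finite-variance and Poincar\'e hypotheses. Once the right space is in place the proof is a clean Riesz representation computation --- no Stein machinery is needed, only the Hilbert-space interpretation of Poincar\'e, which is very much in the spirit of the \emph{weak Stein equation} philosophy emphasised above.
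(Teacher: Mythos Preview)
Your proposal is correct and matches the paper's own proof almost exactly: the paper likewise equips $W^{1,2}_0(p_1)$ with the Dirichlet form $\mathcal{E}_{p_1}(u,v) = \E_{p_1}[\langle \grad u, \grad v\rangle]$, checks via Poincar\'e that this is an inner product whose norm is equivalent to the Sobolev norm (hence the space is complete), applies the Riesz representation theorem to the bounded functional $v \mapsto -\E_{p_1}[\bar h\, v]$, and then derives \eqref{eq:stekfakwkstek1} by substituting $v = g$ and using Cauchy--Schwarz together with two applications of Poincar\'e. Your derivation of the bound directly from $\norm{g}_H = \norm{L}_{H^*}$ is a slight streamlining of the same computation, and your extension to non-centred $v$ is implicit in the paper since there $W^{1,2}_0(p_1)$ is taken to consist of $p_1$-mean-zero functions.
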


\begin{proof}
Let $h \in \Lip({\Omega_{p_1},} 1)$.
The set  $\mathcal{H}_1 = \{ v \in L^2(P_1) \sth \E_{p_1} v = 0\} $ with
inner product $\langle u,v \rangle_{p_1} = \E_{p_1}[uv]$ is a Hilbert
space and  $\bar h = h - \E_{p_1}h \in \mathcal{H}_1$ as $P_1$
admits a variance. Moreover,
$$\norm{h}_{p_1} = \sqrt{ \E_{p_1} h^2 } \leq \sqrt{ C_{P_1}  \E_{p_1}
  \norm{\grad h}^2 } \leq \sqrt{ C_{P_1}}.$$
In particular we have that $ W^{1,2}_0(p_1) \subseteq \mathcal{H}_1.$
We endow the space $ W^{1,2}_0(p_1)$ with the symmetric bilinear form
$\mathcal{E}_{p_1}(u,v) = \E_{p_1} [\left\langle  \grad u,\grad v  \right\rangle].$
This bilinear form is coercive as due to the Poincar\'{e} inequality
\begin{equation} \label{eq:coercive} \mathcal{E}_{p_1}(u,u) = \E_{p_1}
[\left\langle \grad u,\grad u \right\rangle] \geq \frac{1}{C_{P_1}}
\E_{p_1} [u^2]= \frac{1}{C_{P_1}} \norm{ u }_{p_1}^2 .
\end{equation}
{From $\eqref{eq:coercive}$ it follows that}
{
$ \langle \cdot, \cdot \rangle_{\mathcal E_p} $ is also an inner product {on $ W^{1,2}_0(p) $}
and
$ \| \cdot \|_{\mathcal E_p} $ is a norm which is uniformly equivalent to
$ \| \cdot \|_{W^{1,2}(p)} $. Therefore, $ W^{1,2}_0(p) $ is complete
under $ \| \cdot \|_{\mathcal E_p} $, so that
$ \langle u, v \rangle_{\mathcal E_p} $ makes it a Hilbert space.

Now let $ \phi_{p_1}(v) := \E_{p_1} \bigl[ (h - \E_{p_1} h) v \bigr] $.
By the Cauchy--Schwarz {inequality} and {the} Poincar\'e inequality
and using 
that $ h \in \Lip({\Omega_{p_1},} 1)$ 
we can {bound}
\[
 |\phi_{p_1}(v)|
 \le
 \| h - \E_{p_1} h \|_{L^2(p_1)} \| v \|_{L^2(p_1)}
 \le
 C_{P_1} \| h \|_{\mathcal E_{p_1}} \| v \|_{\mathcal E_{p_1}}
 \le
 C_{P_1} \| v \|_{\mathcal E_{p_1}}
 \, .
\]
{Hence, b}y the Riesz representation theorem, there exists $ g \in W^{1,2}_0(p_1) $, such that
$ \phi_{p_1}(v) = \langle v, g \rangle_{\mathcal E_{p_1}} $ for all $ v \in W^{1,2}_0(p_1) $, {and} 
$ \| g \|_{\mathcal E_{p_1}} \le C_{P_1} $.
This completes the proof.}

\end{proof}

Obviously if $g$ is a solution of the strong Stein equation satisfying
all the assumptions from Proposition \ref{cor:wass-dist-betw-1}, then
it will also satisfy \eqref{eq:wkstek1}. However, in Corollary
\ref{cor:referee} we need not solve the equation, nor require any form
of regularity. Equations {\eqref{eq:weaksteinequ2} and
  \eqref{eq:wkstek1} are} akin to \emph{``weak Stein equations''} (and
thus would encourage us to consider \emph{``weak Stein operators''})
and inequality \eqref{eq:stekfakwkstek1} is a form of \emph{``weak
  Stein factor''}. This approach seems promising for tackling Stein's
method in more general multivariate settings, without needing to solve
Stein equations explicitly. {For instance, the next result follows easily.}
                                                             %
\begin{cor} \label{cor:referee} Let $P_1, P_2$ be as in Theorem
  \ref{theo:comap} and suppose furthermore that $P_1$ satisfies the
  conditions of Proposition \ref{prop:l2bound}. Also assume that
  $p_2 = \pi_0 \, p_1$ with
  $\pi_0 - \E_{p_1}\pi_0 \in W^{1,2}_0(p_1)$. 
Then
\begin{equation}
    \label{eq:bouindwaspoinca}
    \mathcal{W}_1(P_1, P_2) \leq C_{P_1} \sqrt{\E_{p_1}
       \norm{ \grad \pi_0}^2 }.
  \end{equation}
\end{cor}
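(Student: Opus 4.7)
The plan is to turn the Wasserstein supremum into an $L^2(p_1)$ inner product of gradients via the weak Stein equation of Proposition \ref{prop:l2bound}, and then apply Cauchy--Schwarz with the weak Stein factor \eqref{eq:stekfakwkstek1}.

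First, I would rewrite the integral probability metric representation of the Wasserstein distance in a way that isolates a mean-zero test function against a mean-zero density ratio. Since $p_2 = \pi_0 p_1$ is a probability density, $\mathbb{E}_{p_1}\pi_0 = \int \pi_0 p_1 = \int p_2 = 1$. Hence for any $h \in \mathrm{Lip}(1)$,
\begin{align*}
\mathbb{E}_{p_2}[h] - \mathbb{E}_{p_1}[h]
&= \mathbb{E}_{p_1}[h\,\pi_0] - \mathbb{E}_{p_1}[h]
= \mathbb{E}_{p_1}\!\left[(h-\mathbb{E}_{p_1}h)(\pi_0 - \mathbb{E}_{p_1}\pi_0)\right],
\end{align*}
where the last equality follows by adding and subtracting $\mathbb{E}_{p_1}[h]\cdot\mathbb{E}_{p_1}\pi_0 = \mathbb{E}_{p_1}[h]$.

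Next, I would invoke Proposition \ref{prop:l2bound} with the choice $v = \pi_0 - \mathbb{E}_{p_1}\pi_0$, which by assumption lies in $W^{1,2}_0(p_1)$. This yields a function $g \in W^{1,2}_0(p_1)$ satisfying
\begin{equation*}
\mathbb{E}_{p_1}\!\left[(h - \mathbb{E}_{p_1}h)\,v\right] = \mathbb{E}_{p_1}\!\left[\langle \grad v, \grad g \rangle\right] = \mathbb{E}_{p_1}\!\left[\langle \grad \pi_0, \grad g \rangle\right],
\end{equation*}
where in the last identity I used $\grad v = \grad \pi_0$ (the constant $\mathbb{E}_{p_1}\pi_0$ has zero gradient). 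Crucially, the solution comes with the weak Stein factor bound $\sqrt{\mathbb{E}_{p_1}\norm{\grad g}^2} \leq C_{P_1}$.

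Finally, combining the two displays and applying Cauchy--Schwarz in $L^2(p_1)$,
\begin{equation*}
\left|\mathbb{E}_{p_2}[h] - \mathbb{E}_{p_1}[h]\right|
= \left|\mathbb{E}_{p_1}[\langle \grad \pi_0, \grad g\rangle]\right|
\leq \sqrt{\mathbb{E}_{p_1}\norm{\grad g}^2}\,\sqrt{\mathbb{E}_{p_1}\norm{\grad \pi_0}^2}
\leq C_{P_1} \sqrt{\mathbb{E}_{p_1}\norm{\grad \pi_0}^2}.
\end{equation*}
Taking the supremum over $h \in \mathrm{Lip}(1)$ delivers \eqref{eq:bouindwaspoinca}. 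There is no genuine obstacle here: the assumption $\pi_0 - \mathbb{E}_{p_1}\pi_0 \in W^{1,2}_0(p_1)$ is precisely what is needed to apply Proposition \ref{prop:l2bound}, and the observation $\mathbb{E}_{p_1}\pi_0 = 1$ is what allows the test-function difference to be rewritten as a covariance and thus paired with a mean-zero $v$. The weak Stein factor absorbs all regularity concerns that would otherwise require a strong solution to the Stein equation.
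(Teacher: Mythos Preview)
Your proof is correct and follows essentially the same route as the paper: rewrite $\mathbb{E}_{p_2}h-\mathbb{E}_{p_1}h$ as a $p_1$-covariance with the centred density ratio, invoke Proposition~\ref{prop:l2bound} with $v=\pi_0-\mathbb{E}_{p_1}\pi_0$ to obtain the weak solution $g$, and finish by Cauchy--Schwarz together with the weak Stein factor \eqref{eq:stekfakwkstek1}. If anything, your version is slightly more explicit about the centering of $\pi_0$ than the paper's shorthand ``$v=\pi_0$''.
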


\begin{proof}
  Let $h \in \Lip({\Omega_{p_1},} 1)$.
  From
  Proposition \ref{prop:l2bound} applied with $v = \pi_0$, it follows
  that there
  exists $g \in W^{1,2}_0(p_1)$ which is solution of the weak
  Stein equation
  $$
    \E_{p_2}[h] - \E_{p_1}[h]
    =
    \E_{p_1} \bigl[ \langle \grad  \pi_0 , \grad g  \rangle \bigr]
    \, .
  $$
  Since, furthermore, $g$ satisfies  \eqref{eq:stekfakwkstek1} which
  does not depend on $h$, the
  claim follows by the Cauchy--Schwarz inequality.
\end{proof}

There is a {large} literature regarding Poincar\'e inequalities and
their optimal associated Poincar\'e constant. For example, when $P$
has $k$-log-concave pdf $p$ such that $\log p \in \Cont^4(\R^d)$ then
$C_P \leq \, 2/k$; for $P$ the uniform distribution on
$[0, 1]^2$ it is known that $C_P = 2/\pi^2$ is an optimal
Poincar\'e constant, see \cite{payne1960optimal}. These bounds allow
to obtain bounds on 1-Wasserstein distance via Stein's method even in
cases where Stein factors are not available.}

\begin{example}[Comparing copulas]\label{ex:comparingcopulas}
  Let $V = (V_1,V_2)$ be a 2-dimensional random vector  such that the
  marginals $V_1$ and $V_2$ have a uniform distribution on {$(0,1)$}.
  {This distribution is described by the copula of $V$} defined
  as
  $C(x_1,x_2) = \P[V_1\leq x_1, V_2 \leq x_2],$
  $(x_1,x_2)\in {(}0,1{)}^2.$
  We
  want to bound the Wasserstein distance between $P^V$, the law of $V$
  and $P^{U}$ the 
  law of
  $U = (U_1, U_2)$, {where} $U_1$ and $U_2$
  are uniform {on $(0,1)$} and independent. First we  recall that an
  optimal Poincar\'e constant for the uniform distribution on $(0,1)^2$ is
  $C_p = 2/\pi^2 $.
Assume
that $V$ has a {pdf}
$c = \partial^2_{x_1x_2} C$. Then the densities of $V$ and $U$ have
nested supports, with $\pi_0 = c$.   {If $c  - \E_{U(0,1)^2} c \in
  W_0^{1,2}(U(0,1)^2)$}
then, a simple application of
\eqref{eq:bouindwaspoinca} with $C_P = 2/\pi^2$ yields
$$
  \mathcal{W}_1({P^V, P^U} ) \leq  \frac{2}{\pi^2} \sqrt{ \E_{ U(0,1)^2}  \norm{\grad c}^2}.$$
In some cases, one can compute the gradient of $c$ in a closed
form. For instance,  the Ali--Mikhail--Haq
copula \cite{ali1978class} has pdf
$$ c(x_1, x_2) = \frac{(1-\theta) \{ 1  - \theta(1-x_1)(1 - x_2)\} + 2 \theta x_1 x_2}{\{  1  - \theta(1-x_1)(1 - x_2)\}^3}.$$
{Then  $\E_{ U(0,1)^2}[ c ] = 1$ and
\begin{align*}
\E_{ U(0,1)^2} \norm{  \grad c}^2
 &= \frac{2}{105} \frac{\theta}{(1-\theta)^2} \left( \theta(2 -
    \theta)(52(1-\theta)+17\theta^2) - 36 \log(1-\theta) \right)
    \end{align*}
so that  $c  - \E_{U(0,1)^2} c \in W_0^{1,2}(U(0,1)^2)$.}
Here $\theta \in (-1,1)$ is a measure of association between the two components $V_1$ and $V_2$ of the vector $(V_1,V_2)$ with uniform marginals each. If $\theta=0$ then the uniform copula $(x_1,x_2) \mapsto x_1x_2$ is recovered. Using
Corollary \ref{cor:referee}
 we can assess the Wasserstein distance between the
Ali--Mikhail--Haq copula and the uniform copula in terms of $\theta$.
For $-1 < \theta < 1$ {we bound}
$
{\E_{{ U(0,1)^2}} \norm{ \grad c}^2 }
  \leq  \frac{8}{3} \frac{\theta^2}{(1-|\theta|)^{4}}
$ and
$$  \mathcal{W}_1({P^V, P^U} )  \leq {\frac{2 \sqrt{8} }{\pi^2 \sqrt{3}} } \, |\theta|  \{  1  - |\theta| \}^{-2}.$$
This bound {decreases to 0 for $\theta \rightarrow 0$, indicating
  agreement with the uniform copula for $\theta=0$}. {To our
  knowledge there is no explicit formula for $\mathcal{W}_1({P^V, P^U}
  )$ available; \cite{hallin2020multivariate}
 simulate the Wasserstein distance under Gaussian marginals and discuss simulation strategies.}

 \end{example}

\bigskip
\noindent {\bf{Acknowledgements.}} {We are indebted to Andreas Kyprianou for his patient and considerate handling of this paper.} GM gratefully acknowledge support by the Fonds
de la Recherche Scientifique -- FNRS under Grant MIS F.4539.16. YS
acknowledges support by the Fonds de la Recherche Scientifique -- FNRS
under Grant CDR/OL J.0197.20, as well as the ULB ARC Consolidator
grant.  GR acknowledges partial support from EPSRC grants EP/T018445/1, EP/R018472/1, and EP/X0021951,}
and the Alan Turing Institute.
{MR would like to thank Oliver Dragi\v{c}evi\'c for his help with
Sobolev spaces.} We also thank Max Fathi, Christophe Ley, Gilles Mordant
and Guillaume Poly for interesting discussions, as
well as Lester Mackey and Steven Vanduffel for suggesting some
references which we had overlooked.

 \addcontentsline{toc}{section}{Bibliography}

   \bibliographystyle{abbrv}

\begin{thebibliography}{10}

\bibitem{ali1978class}
M.~M. Ali, N.~Mikhail, and M.~S. Haq.
\newblock A class of bivariate distributions including the bivariate logistic.
\newblock {\em Journal of {M}ultivariate {A}nalysis}, 8(3):405--412, 1978.

\bibitem{arras2017stein}
B.~Arras and C.~Houdr{\'e}.
\newblock \emph{On {S}tein's method for infinitely divisible laws with finite first
  moment.}
\newblock {Springer International Publishing}, 2019.

\bibitem{arras2018stein}
B.~Arras and C.~Houdr{\'e}.
\newblock On {S}tein's method for multivariate self-decomposable laws with
  finite first moment.
\newblock {\em Electronic Journal of Probability}, 24,  2019.

\bibitem{artstein2004solution}
S.~Artstein, K.~Ball, F.~Barthe, and A.~Naor.
\newblock Solution of {S}hannon's problem on the monotonicity of entropy.
\newblock {\em Journal of the American Mathematical Society}, 17(4):975--982,
  2004.

\bibitem{azzalini1996multivariate}
A.~Azzalini and A.~Dalla~Valle.
\newblock The multivariate skew-normal distribution.
\newblock {\em Biometrika}, 83(4):715--726, 1996.

\bibitem{BaBaNa03}
K.~Ball, F.~Barthe, and A.~Naor.
\newblock Entropy jumps in the presence of a spectral gap.
\newblock {\em Duke Mathematical Journal}, 119(1):41--63, 2003.

\bibitem{barbour2015multivariate}
A.~D. Barbour, M.~Luczak, and A.~Xia.
\newblock Multivariate approximation in total variation, I: Equilibrium distributions of Markov jump processes.
\newblock {\em The Annals of Probability}, 46(3):1351--1404, 2018.

\bibitem{barbour2018multivariate}
A.~D. Barbour, M.~J. Luczak, and A.~Xia.
\newblock Multivariate approximation in total variation, II: Discrete normal
  approximation.
\newblock {\em The Annals of Probability}, 46(3):1405--1440, 2018.





\bibitem{MR1035659}
A.~D. Barbour.
\newblock {S}tein's method for diffusion approximations.
\newblock {\em Probability Theory and Related Fields}, 84(3):297--322, 1990.


\bibitem{betsch2020testing}
S.~Betsch and B.~Ebner.
\newblock Testing normality via a distributional fixed point property in the
  stein characterization.
\newblock {\em TEST}, 29(1):105--138, 2020.

\bibitem{bonis2016rates}
T.~Bonis.
\newblock Stein's method for normal approximation in Wasserstein
distances with application to the multivariate Central Limit Theorem.
\newblock {\em Probability Theory and Related Fields},
178(3):827--860, 2020.

\bibitem{C82}
T.~Cacoullos.
\newblock On upper and lower bounds for the variance of a function of a random
  variable.
\newblock {\em The {A}nnals of {P}robability}, 10(3):799--809, 1982.

\bibitem{chafma2010}
D. Chafa\"i and F.  Malrieu.
  \newblock On fine properties of mixtures with respect to
  concentration of measure and Sobolev type inequalities.
  \newblock {\em Annales de l'IHP Probabilit\'es et Statistiques}, 46(1):72--96, 2010.


\bibitem{ChGoSh11}
L.~H.~Y. Chen, L.~Goldstein, and Q.-M. Shao.
\newblock {\em Normal approximation by {S}tein's method}.
\newblock Probability and its Applications. Springer, Berlin, Heidelberg,
  2011.


\bibitem{courtade2017existence}
T.~A. Courtade, M.~Fathi, and A.~Pananjady.
\newblock Existence of {S}tein kernels under a spectral gap, and discrepancy
  bound.
\newblock  {\em Annales de l'IHP Probabilit\'es et Statistiques}, 55(2):777--790, 2019.

\bibitem{Do14}
C.~D\"obler.
\newblock {S}tein's method of exchangeable pairs for the {B}eta distribution
  and generalizations.
\newblock {\em Electronic Journal of Probability}, 20(109):1--34, 2015.


\bibitem{ernst2020first}
M.~Ernst, G.~Reinert, and Y.~Swan.
\newblock First-order covariance inequalities via {S}tein's method.
\newblock {\em Bernoulli}, 26(3):2051--2081, 2020.


\bibitem{EG}
L.~C.~Evans and R.~F.~Gariepy.
\newblock {\em Measure Theory and Fine Properties of Functions}.
\newblock CRC Press, 1992.



\bibitem{fang2018multivariate}
X.~Fang, Q.-M. Shao, and L.~Xu.
\newblock Multivariate approximations in {W}asserstein distance by {S}tein's
  method and {B}ismut's formula.
\newblock {\em Probability Theory and Related Fields}, 174(3--4):945--979, 2019.


\bibitem{1i2018stein}
M.~Fathi.
\newblock Stein kernels and moment maps.
\newblock {\em The Annals of Probability}, 47(4):2172--2185,  2019.


\bibitem{fathi2022relaxing}
M. Fathi, L. Goldstein, G. Reinert and A. Saumard.
\newblock Relaxing the Gaussian assumption in shrinkage and SURE in high dimension.
\newblock {\em The Annals of Statistics},
{50}(5):2737--2766, 2022.


\bibitem{gan2017dirichlet}
H.~L. Gan, A.~R{\"o}llin, and N.~Ross.
\newblock Dirichlet approximation of equilibrium distributions in {C}annings
  models with mutation.
\newblock {\em Advances in Applied Probability}, 49(3):927--959, 2017.

\bibitem{GoRi96}
L.~Goldstein and Y.~Rinott.
\newblock Multivariate normal approximations by {S}tein's method and size bias
  couplings.
\newblock {\em Journal of Applied Probability}, 33(1):1--17, 1996.


\bibitem{GoRei97}
L. Goldstein and G. Reinert.
\newblock Stein’s method and the zero bias transformation with
application to simple random sampling.
\newblock {\em The Annals of Applied Probability}, 7(4): 935–952, 1997.




\bibitem{gorham2016measuring}
J.~Gorham, A.~B. Duncan, S.~J.~Vollmer, and L.~Mackey.
\newblock Measuring sample quality with diffusions.
\newblock {\em The {A}nnals of {A}pplied {P}robability} 29(5):2884--2928, 2019.

\bibitem{G91}
F.~G{{\"o}}tze.
\newblock On the rate of convergence in the multivariate {CLT}.
\newblock {\em The {A}nnals of {P}robability}, 19(2):724--739, 1991.

\bibitem{hallin2020multivariate}
M.~Hallin,  G.~Mordant, and J.~Segers.
\newblock {Multivariate goodness-of-Fit tests based on Wasserstein distance}.
\newblock {\it arXiv preprint arXiv:2003.06684},
{2020}.


\bibitem{Jo04}
O.~Johnson.
\newblock {\em Information theory and the {C}entral {L}imit {T}heorem}.
\newblock Imperial College Press, London, 2004.

\bibitem{landsman2008stein}
Z.~Landsman and J.~Ne{\v{s}}lehov{\'a}.
\newblock {S}tein's lemma for elliptical random vectors.
\newblock {\em Journal of {M}ultivariate {A}nalysis}, 99(5):912--927, 2008.

\bibitem{ledoux2015stein}
M.~Ledoux, I.~Nourdin, and G.~Peccati.
\newblock {S}tein's method, logarithmic {S}obolev and transport inequalities.
\newblock {\em Geometric and Functional Analysis}, 25(1):256--306, 2015.


\bibitem{Leo}
G.~Leoni.
\newblock {\em A First Course in Sobolev Spaces}.
\newblock Second Edition.
\newblock Graduate Texts in Mathematics, Vol.~181. AMS, 2017.

\bibitem{ley2015distances}
C.~Ley, G.~Reinert, and Y.~Swan.
\newblock Distances between nested densities and a measure of the impact of the
  prior in {B}ayesian statistics.
\newblock {\em The Annals of Applied Probability}, 27(1):216--241, 2016.

\bibitem{LRS16}
C.~Ley,  G.~Reinert, and Y.~Swan.
\newblock {S}tein's method for comparison of univariate distributions.
\newblock {\em Probability Surveys}, 14:1--52, 2017.

\bibitem{loh1992stein}
W.-L. Loh.
\newblock {S}tein's method and multinomial approximation.
\newblock {\em The Annals of Applied Probability}, pages 536--554, 1992.

\bibitem{mackey2016multivariate}
L.~Mackey and J.~Gorham.
\newblock Multivariate {S}tein factors for a class of strongly log-concave
  distributions.
\newblock {\em Electronic Communications in Probability}, 21, 2016.

\bibitem{meckes2009stein}
E.~Meckes.
\newblock On {S}tein's method for multivariate normal approximation.
\newblock In {\em High dimensional probability V: The Luminy volume}, pages
  153--178. Institute of Mathematical Statistics, 2009.

\bibitem{NP11}
I.~Nourdin and G.~Peccati.
\newblock {\em Normal approximations with {M}alliavin calculus~: from {S}tein's
  method to universality}.
\newblock Cambridge Tracts in Mathematics. Cambridge University Press, 2012.


\bibitem{nourdin2013integration}
I.~Nourdin, G.~Peccati, and Y.~Swan.
\newblock Integration by parts and representation of information functionals.
\newblock {\em IEEE International Symposium on Information Theory (ISIT)},
  pages 2217--2221, 2014.


\bibitem{payne1960optimal}
L.~E. Payne and H.~F. Weinberger.
\newblock An optimal Poincar{\'e} inequality for convex domains.
\newblock {\em Archive for Rational Mechanics and Analysis}, 5(1):286--292,
  1960.


\bibitem{MR2573554}
G.~Reinert and A.~R\"ollin.
\newblock Multivariate normal approximation with {S}tein's method of
  exchangeable pairs under a general linearity condition.
\newblock {\em The {A}nnals of {P}robability}, 37(6):2150--2173, 2009.

\bibitem{reinert2019approximating} G.~Reinert, N.~Ross.  \newblock
  Approximating stationary distributions of fast mixing {G}lauber
  dynamics, with applications to exponential random graphs.  \newblock
  {\em The Annals of Applied Probability}, 29(5):3201--3229, 2019.

\bibitem{Ro11}
N.~Ross.
\newblock Fundamentals of {S}tein's method.
\newblock {\em Probability Surveys}, 8:210--293, 2011.

\bibitem{saumard2018weighted}
A.~Saumard.
\newblock Weighted {P}oincar{\'e} inequalities, concentration inequalities and
  tail bounds related to the behavior of the {S}tein kernel in dimension one.
\newblock {\em Bernoulli}, 25(4B):3978--4006, 2019.

\bibitem{stein1986}
C.~{S}tein.
\newblock {\em Approximate computation of expectations}.
\newblock Institute of Mathematical Statistics Lecture Notes --- Monograph
  Series, 7. Institute of Mathematical Statistics, Hayward, CA, 1986.

\bibitem{stein2004}
C.~{S}tein, P.~Diaconis, S.~Holmes, and G.~Reinert.
\newblock Use of exchangeable pairs in the analysis of simulations.
\newblock In P.~Diaconis and S.~Holmes, editors, {\em {S}tein's method:
  expository lectures and applications}, volume~46 of {\em IMS Lecture Notes
  Monogr. Ser}, pages 1--26. Beachwood, Ohio, USA: Institute of Mathematical
  Statistics, 2004.

\bibitem{S81}
C.~M. {S}tein.
\newblock Estimation of the mean of a multivariate normal distribution.
\newblock {\em The {A}nnals of {S}tatistics}, 9(6):1135--1151, 1981.

\bibitem{T11}
  A. Takatsu.
  \newblock Wasserstein geometry of Gaussian measures.
  \newblock \emph{Osaka J. Math.} 48(4):1005--1026, 2011.



\bibitem{upadhye2014stein}
N.~Upadhye, V.~\v{C}ekanavi\v{c}ius, and P.~Vellaisamy.
\newblock On {S}tein operators for discrete approximations.
\newblock {\em Bernoulli}, 23(4A):2828--2859, 2017.



\end{thebibliography}

\appendix

\section{Some more remarks about our function spaces}
\label{sn:somemore}

{As indicated in Remark \ref{rem:rem33}, here is a more detailed
discussion about our choices of ACL-type function classes, with 
$ \ACL $ standing for `absolutely continous on lines'.}
{In particular, we discuss 
relationships to the Sobolev spaces.}

\begin{prop}
\label{prop:D0}
  Let $ \Omega \subseteq \R^d $ be an open set and let
  $ u \in \ACL(\Omega) $ or
  $ u \in W^{1,1}_\loc(\Omega) $. Suppose that $ u $ is constant on some
  set $ A \subseteq \Omega $. Then for each $ e \in S^{d-1} $,
  $ \partial_e u(x) = 0 $ for Lebesgue-almost all $ x \in A $.
\end{prop}

\begin{proof}
  Let $ u(x) = c $ for all $ x \in A $.
  Taking a line $ L $ parallel to $ e $, observe that almost all
  $ x \in A \cap L $ (with respect to the one-dimensional Lebesgue
  measure) are points of Lebesgue density $ 1 $ for the set
  $ A \cap L $ and consequently for the set
  $ \{ x \in \Omega \cap L \sth u(x) = c \} $,
  considering the one-dimensional Lebesgue density in direction $ e $
  (see, e.~g., Corollary~B.121 of \cite{Leo} or Corollary~3 in
  Section~1.7.1 of \cite{EG}). However, if $ x $ is such a point and $ u $
  is differentiable at $ x $ in direction $ e $, we have
  $ \partial_e u(x) = 0 $. The proof is now completed by Fubini's theorem.
\end{proof}


\noindent
{The next result, which is a {counterpart of Proposition \ref{prop:D0},} 
shows that the ACL functions `almost' preserve a fundamental property of
the $ \Cont^1 $ functions. This property is crucial 
{to the proof of} 
Proposition \ref{sec:diff-stein-oper-directional}.}

\begin{prop}
\label{prop:D0Connected}
  Let $ \Omega \subseteq \R^d $ be a connected open set and let $ u \in \ACL(\Omega) $ be such that $ \partial_i u = 0 $ Lebesgue-almost
  everywhere on $ \Omega $ for all
  $ i = 1, 2, \ldots, n $\spacefactor=3000{}
  Then $ u $ is Lebesgue-almost everywhere constant on $ \Omega $.
\end{prop}%

\begin{proof}
  First, consider the case where $ \Omega $ is a cube parallel to the
  coordinate directions. Then the result can be proved by induction.
  The case $ d = 1 $ is immediate. For the induction step from $ d - 1 $
  to $ d $, observe that by Fubini's theorem, there exists a hyperplane
  $ H $ perpendicular to $ e_d $ and intersecting $ \Omega $, such that
  for all $ i = 1, 2, \ldots, d - 1 $, $ \partial_i u = 0 $
  almost everywhere on $ H $ with respect to the $ (d - 1)$-dimensional
  Hausdorff measure. By the induction hypothesis, $ u $ is constant Lebesgue-almost
  everywhere on $ H $.
  Next, for almost all
  lines $ L $ parallel to $ e_d $ and intersecting $ \Omega $, $ u $ is
  absolutely continuous and therefore constant on $ L $. The proof is now
  completed by Fubini's theorem.

  For the general case, observe that $ \Omega $ can be covered by
  a countable family $ \mathcal Q $ of open cubes contained in
  $ \Omega $. By the above, there exists a Lebesgue-null
  set $ N $, such that for any $ Q \in \mathcal Q $, $ u $ is
  constant on $ Q \setminus N $. Since $ \Omega $ is connected, any
  two points $ x, y \in \Omega \setminus N $ are linked by a finite
  sequence of overlapping  cubes belonging to $ \mathcal Q $. Clearly,
  if two open cubes overlap, they also contain a common point which is not
  in $ N $. Therefore, $ u(x) = u(y) $.
\end{proof}%
}


\begin{prop}
\label{prop:W11c0}
  Let $ \Omega \subseteq \R^d $ be an open set and let
  $ u \in W^{1,1}_\loc(\Omega) $ be compactly supported. Then we have
  $ u \in W^{1,1}(\Omega) $ and $ \int_\Omega \partial_e u = 0 $ for all
  $ e \in S^{d-1} $.
\end{prop}

\begin{proof}
  Since $ u $ has a compact support, so does $ \partial_e u $. Therefore,
  $ u \in W^{1,1}(\Omega) $.
  By Exercise~C.23 in \cite{Leo}, there exists a cut-off function
  $ v \in \Cont^\infty_\comp(\Omega) $, which equals $ 1 $ on the supports
  of $ u $ and $ \partial_e u $. By the very definition of the weak
  derivative, we then have
  \[
    \int_\Omega \partial_e u
    =
    \int_\Omega (\partial_e u) v
    =
    - \int_\Omega u (\partial_e v)
    =
    0 \, .
  \]
\end{proof}

\noindent
{In the case that $\Omega = \R^d$ we have the following result.} 

\begin{prop}
\label{prop:W110Entire}
  For all $ u \in W^{1,1}(\R^d) $ and all $ e \in S^{d-1} $, we have
  $ \int \partial_e u = 0 $.
\end{prop}

\begin{proof}
  By Proposition~\ref{prop:W11c0}, the assertion is true for all
  $ u \in \Cont^\infty_\comp(\R^d) $. However, by Theorem~11.35 of
  \cite{Leo}, $ \Cont^\infty_\comp(\R^d) $ is dense in $ W^{1,1}(\R^d) $
  with respect to the appropriate Sobolev norm. This completes the proof.
\end{proof}

\noindent
{The next group of results clarifies relationships between the ACL-type spaces and the Sobolev spaces.}

\begin{prop}
\label{prop:DL1loc}
  For any open set $ \Omega \subseteq \R^d $, we have
  $ \ACL^1_\loc(\Omega) \subseteq L^1_\loc(\Omega) $.
\end{prop}

\begin{proof}
  {We need to prove that each $ x \in \Omega $ has an open neighbourhood
  $ U $, such that the restriction of $ u $ to $ U $ belongs to
  $ L^1(U) $. Now choose $ U $ to be a bounded rectangle and apply
  the basic Poincar\'e inequality for rectangles (see, e.~g.,
  Exercise~13.34 in [Leo]), noting that the restriction of $ u $ to
  $ U $ belongs to $ \ACL^1(U) $.}
\end{proof}

\begin{prop}
\label{prop:ACL1HomW}
  For an open set $ \Omega \subseteq \R^d $, $ \dot W^{1,1}(\Omega) $ is
  precisely the set of all Borel measurable functions
  $ u \Colon \Omega \to \R $ which have a version
  $ \bar u \in \ACL^1(\Omega) $. Moreover, the classical directional
  derivatives of $ \bar u $ are also its corresponding weak derivatives.
\end{prop}

\begin{proof}
  {The result is essentially Theorem~11.45 of \cite{Leo}, but
  we need to modify it from the usual Sobolev space $ W^{1,1} $
  to the homogeneous Sobolev space $ \dot W^{1,1} $ and, most
  important, from functions which are absolutely continuous on
  almost all lines \emph{parallel to the coordinate axes} to ACL
  functions.

  Firstly, suppose that $ u $ has a version $ \bar u \in \ACL^1(\Omega) $.
  By Proposition \ref{prop:DL1loc}, $ \bar u \in L^1_\loc(\Omega) $,
  so that each $ x \in \Omega $ has an open neighbourhood $ U $
  such that the restriction of $ \bar u $ to $ U $ belongs to $ L^1(U) $.
  By Theorem~11.45 of \cite{Leo}, the classical directional derivatives of
  $ \bar u $ are then also its corresponding weak derivatives considered on
  $ U $. However, as the weak derivative is a local concept,
  the weak derivatives can be considered on the whole $ \Omega $.
  By assumption, all directional derivatives belong to $ L^1(\Omega) $.
  Therefore, $ \bar u $, and therefore $ u $, belongs to
  $ \dot W^{1,1}(\Omega) $.}

  {Now we turn to the non-trivial part, the converse.}
  Thus, take $ u \in \dot W^{1,1}(\Omega) $. As in Step~1 of the proof of
  Theorem~11.45 of \cite{Leo}, the function $ \bar u $ is constructed
  by means of the \emph{standard mollifier}:
  \[
    \eta(z)
    :=
    \left\{ \begin{array}{c@{\quad}l}
      a_d \exp \left( - \frac{1}{1 - \| z \|^2} \right) &
        \text{if $ \| z \| \le 1 $,}
    \\ \noalign{\vskip 0.5ex}
      0                                                 &
        \text{if $ \| z \| \ge 1 $,}
   \end{array} \right.
  \]
  where $ c_d $ is chosen so that $ \int_{\R^d} \eta(z) \,\dl z = 1 $.
  As usual, the mollified functions are then defined as
  $ u_\e(x) := \int_{\R^d} u(x + \e z) \, \eta(z) \,\dl z $.

  In Step~1 of the proof of Theorem~11.45 of \cite{Leo}, a function
  $ \bar u $, which is locally absolutely continuous on Lebesgue-almost
  all lines parallel to the coordinate directions, is constructed as the
  limit of the sequence $ u_{\e_n} $ for some sequence $ \{ \e_n \}_n $.
  However, we claim that the function
  \[
    \bar u(x)
    :=
    \left\{ \begin{array}{c@{\quad}l}
      \lim\limits_{\e \downarrow 0} u_\e(x) &
        \text{if this limit exists,}
    \\
      0
        & \text{otherwise}
    \end{array} \right.
  \]
  is already locally absolutely continuous on Lebesgue-almost all lines
  parallel to $ e $ for all $ e \in S^{d-1} $. Before turning to the proof
  of this claim, notice that $ \bar u $ is Borel measurable, as
  the limit can be taken only over rational $ \e $ (the map
  $ \e \mapsto \eta_\e $ is continuous in the supremum norm).

  Now fix $ e \in S^{d-1} $. Without loss of generality, we may assume
  that $ e = e_d $. The crucial reason why the function $ \bar u $ works
  for all directions is that the set
  $ \{ x \in \Omega \sth \bar u(x) \ne u(x) \} $ has
  $ (d - 1) $-dimensional Hausdorff measure zero. This essentially follows
  from Theorem~1 in Section~4.8 of \cite{EG} and Theorem~3 in
  Section~5.6.3 ibidem. However, the first one cannot be applied directly
  and some additional thought is needed:
  \begin{list}{\textbullet}%
    {\labelwidth 1em \topsep 0ex \parsep 0ex \itemsep 0.5ex}
    \item Theorem~1 in Section~4.8 of \cite{EG} concerns convergence of
    the functions
    \[
      (u)_{x,r} := b_d \int_B u(x + r y) \,\dl y
    \]
    rather than the functions $ u_\e(x) $ (here, $ B $ denotes the unit
    ball in $ \R^d $ and $ b_d $ its reciprocal volume). However, a
    routine application of Fubini's theorem yields
    \[
      u_\e(x)
      =
      \frac{2 a_d}{b_d} \int_0^1
        (u)_{x, t \e} \, \frac{t^{d+1}}{(1 - t^2)^2}
        \exp \left( - \frac{1}{1 - t^2} \right)
      \dl t \, .
    \]
    Now if $ \lim_{r \downarrow 0} (u)_{x,r} = u(x) $ for a fixed $ x $,
    the values $ (u)_{x,r} $ must be bounded for sufficiently small $ r $,
    so that the values $ (u)_{x, t \e} $ are uniformly bounded in
    $ t \in [0, 1] $ for sufficiently small $ \e $. Since
    $ \int_0^1 \frac{t^{d+1}}{(1 - t^2)^2}
    \exp \left( - \frac{1}{1 - t^2} \right) \dl t < \infty $,
    the desired convergence follows from the dominated convergence theorem.
    \item Theorem~1 in Section~4.8 of \cite{EG} assumes that
    $ u \in W^{1,1}(\R^d) $, but here, we only have
    $ u \in \dot W^{1,1}(\Omega) $. Although the convergence of $ u_\e $ 
    towards $ u $ is an entirely local property, we still need to
    construct appropriate functions which are, along with their weak
    first-order partial derivatives, globally integrable. Thus, expressing
    $ \Omega $ as a countable union of open balls $ U_n $, it suffices to
    prove that for each $ n $, the $ (d - 1) $-dimensional Hausdorff
    measure of the set $ \{ x \in U_n \sth \bar u(x) \ne u(x) \} $ equals
    zero. Clearly, the restriction of $ u $ to $ U_n $ belongs to
    $ W^{1,1}(U_n) $. By Theorem~1 in Section~4.4 of \cite{EG} or
    Theorem~13.17 of \cite{Leo}, the latter restriction can be extended to
    a function in $ W^{1,1}(\R^d) $, so that, finally, by Theorem~1 in
    Section~4.8 of \cite{EG} and Theorem~3 in Section~5.6.3 ibidem, the
    $ (d - 1) $-dimensional Hausdorff measure of the set
    $ \{ x \in U_n \sth \bar u(x) \ne u(x) \} $ vanishes.
  \end{list}

  Once we know that $ (d - 1) $-dimensional Hausdorff measure of the set
  $ \{ x \in \Omega \sth \bar u(x) \ne u(x) \} $ equals zero, Corollary~1
  in Section~2.4.1 of \cite{EG} yields that there exists a Borel set
  $ E_1 \subseteq \R^{d-1} $, such that the Lebesgue measure of
  $ \R^{d-1} \setminus E_1 $ vanishes and that
  $ \bar u(x', x_d) = u(x', x_d) $ for all $ x' \in E_1 $ and all
  $ x_d \in \R $ with $ (x', x_d) \in \Omega $.

  Following Step~1 of the proof of Theorem~11.45 of \cite{Leo}, we find
  that there exist a sequence $ \{ \e_n \}_n $ converging to zero
  and a Borel set $ E_2 \subseteq \R^{d-1} $, such that the Lebesgue
  measure of $ \R^{d-1} \setminus E_2 $ vanishes and such that
  \[
    \int_{(x^\prime, x_d) \in \Omega} \bigl\| \grad u(x', x_d) \bigr\|
    \,\dl x_d
    <
    \infty
  \]
  and
  \[
    \lim_{n \to \infty}
    \int_{(x^\prime, x_d) \in \Omega_{\e_n}} \bigl\|
    \grad u_{\e_n}(x', x_d) - \grad u(x', x_d) \bigr\| \,\dl x_d
    =
    0
  \]
  for all $ x' \in E_2 $; here, $ \Omega_{\e_n} $ is the set of all points
  $ \Omega $ with the distance from $ \R^d \setminus \Omega $ strictly
  greater than $ \e $ (for $ \Omega = \R^d $, we take
  $ \Omega_{\e_n} = \R^d $).

  Now take $ x' \in E_1 \cap E_2 $ and claim that the function
  $ x_d \mapsto \bar u(x', x_d) $ is absolutely continuous on all compact
  intervals $ I $ with $ \{ (x', x_d) \in \Omega \} $ for all
  $ x_d \in I $. Take $ a, b \in I $. Since the functions $ u_{\e_n} $ are
  in $ \Cont^\infty(\Omega_{\e_n}) $, we have, by the fundamental theorem
  of calculus,
  \[
    u_{\e_n}(x', b) - u_{\e_n}(x', a)
    =
    \int_a^b \partial_d u_{\e_n}(x', t) \,\dl t
  \]
  for all $ n $ with $ \{ x' \} \times [a, b] \subseteq \Omega_{\e_n} $
  (which is true for all sufficiently large $ n $). Taking the limit, we
  find that
  \[
    \bar u(x', b) - \bar u(x', a)
    =
    \int_a^b \partial_d u(x', t) \,\dl t
    \, .
  \]
  As $ \bigl| \partial_d u(x', t) \bigr| \,\dl t < \infty $,
  the function $ x_d \mapsto \bar u(x', x_d) $ is absolutely continuous on
  $ I $ and its classical derivative (extended arbitrarily to the whole
  set $ \{ x_d \sth (x', x_d) \in \Omega \} $ and possibly altered on a
  Lebesgue-null set) are also its corresponding weak derivatives.
  This proves the result.
\end{proof}

\begin{cor}
\label{cor:ACL1HomWloc}
  For an open set $ \Omega \subseteq \R^d $, $ W^{1,1}_\loc(\Omega) $ is
  precisely the set of all Borel measurable functions $ \Omega \to \R $
  which have a version in $ \ACL^1_\loc(\Omega) $.\qed
\end{cor}

\begin{cor}
\label{cor:ACL1HomW0}
  For an open set $ \Omega \subseteq \R^d $, $ \dot W^{1,1}_0(\Omega) $
  is precisely the set of all Borel measurable functions
  $ \Omega \to \R $ which have a version in $ \ACL^1_0(\Omega) $.\qed
\end{cor}


\end{document}